\documentclass{scrartcl}

\usepackage[T1]{fontenc}
\usepackage[utf8]{inputenc}
\usepackage[english]{babel}
\usepackage{lmodern}
\usepackage{exscale}
\usepackage[babel]{microtype}
\usepackage{mathtools, mathrsfs}
\usepackage{amssymb}
\usepackage{enumitem}
\usepackage{bbm}
\usepackage{tikz}
\usetikzlibrary{matrix}
\usepackage{tikz-cd}
\usepackage{forest}
\forestset{
  uconf tree/.style={
    for tree={
      grow'=north,
      parent anchor=north,
      child anchor=south,
      l sep=4mm,
      s sep=2mm,
      inner sep=1pt,
      font=\small,
      edge={black},
    },
  },
  rv/.style={red, rectangle, draw=red, inner sep=2pt},
  bv/.style={rectangle, draw=black, inner sep=2pt},
  bx/.style={blue, rectangle, draw=blue, inner sep=2pt},
  lf/.style={inner sep=3pt},
  re/.style={edge={red}},
  de/.style={edge={dashed}},
}

\usepackage{listings}
\usepackage{csquotes}
\usepackage[style=alphabetic, sorting=nyt, maxnames=10, maxalphanames=5, backend=biber]{biblatex}
\DeclareDelimFormat[textcite]{multinamedelim}{\textendash}
\DeclareDelimAlias[textcite]{finalnamedelim}{multinamedelim}
\DeclareNolabel{\nolabel{\regexp{[\p{Z}\p{P}\p{S}\p{C}]+}}}

\usepackage[colorinlistoftodos]{todonotes}
\definecolor{softblue}{RGB}{60, 100, 210}
\DeclareDocumentCommand{\najib}{so+m}{\todo[color=green!70!black, \IfBooleanT{#1}{inline}, caption={\IfValueTF{#2}{#2}{#3}}]{#3}}
\DeclareDocumentCommand{\victor}{so+m}{\todo[color=softblue!80!white, \IfBooleanT{#1}{inline}, caption={\IfValueTF{#2}{#2}{#3}}]{#3}}

\definecolor{corail}{rgb}{0.9882,0.4627,0.4157}
\definecolor{viola}{RGB}{166,146,186}
\definecolor{mocha}{RGB}{164,120,100}
\definecolor{peachfuzz}{rgb}{0.8, 0.5451, 0.3961}
\definecolor{clouddancer}{RGB}{240, 238, 233}
\definecolor{bluefusion}{RGB}{73, 98, 117}
\definecolor{veiledvista2}{RGB}{109, 136, 110}
\usepackage[numbered]{bookmark}
\hypersetup{
  colorlinks,
  linkcolor= veiledvista2 ,
  citecolor= veiledvista2 ,
  urlcolor= veiledvista2 ,
}

\usepackage{amsthm}
\numberwithin{equation}{section}
\theoremstyle{plain}
\newtheorem{theorem}[equation]{Theorem}
\newtheorem{proposition}[equation]{Proposition}
\newtheorem{corollary}[equation]{Corollary}
\newtheorem{lemma}[equation]{Lemma}
\newtheorem{conjecture}[equation]{Conjecture}

\newtheorem{theoremintro}{Theorem}

\newtheorem{conjectureintro}[theoremintro]{Conjecture}

\theoremstyle{definition}
\newtheorem{definition}[equation]{Definition}
\newtheorem{notation}[equation]{Notation}

\theoremstyle{remark}
\newtheorem{example}[equation]{Example}
\newtheorem{remark}[equation]{Remark}

\newcommand{\kk}{\Bbbk}

\newcommand{\F}{\mathbb{F}}
\newcommand{\R}{\mathbb{R}}

\newcommand{\Sym}{\mathbb{S}}
\newcommand{\E}{\mathcal{E}}
\newcommand{\Enu}{\E^{\mathrm{nu}}}
\newcommand{\EE}{\mathbb{E}}
\newcommand{\EEnu}{\EE^{\mathrm{nu}}}
\newcommand{\PP}{\mathcal{P}}
\newcommand{\I}{\mathcal{I}}
\newcommand{\Surj}{\mathcal{X}}
\newcommand{\Surjnu}{\Surj^{\mathrm{nu}}}

\newcommand{\op}{\mathsf{op}}

\newcommand{\dg}{\mathsf{dg}}
\newcommand{\coalg}{\mathsf{coalg}}
\newcommand{\smod}{\mathsf{mod}}
\newcommand{\alg}{\mathsf{alg}}
\newcommand{\Alg}{\mathsf{Alg}}
\newcommand{\Com}{\mathsf{Com}}
\newcommand{\uCom}{\mathsf{uCom}}
\newcommand{\Ass}{\mathsf{Ass}}
\newcommand{\uAss}{\mathsf{uAss}}
\newcommand{\Lie}{\mathsf{Lie}}
\newcommand{\FM}{\mathsf{FM}}
\newcommand{\sLie}{\mathsf{sLie}}
\newcommand{\G}{\mathsf{G}}
\newcommand{\sL}{s\mathbb{L}}
\newcommand{\CC}{\mathcal{C}}

\newcommand{\uPois}{\mathsf{uPois}}

\newcommand{\FI}{\mathsf{FI}}
\newcommand{\FS}{\mathsf{FS}}

\newcommand{\comod}{\text{-}\mathsf{comod}}
\newcommand{\drMod}{\text{-}\mathsf{RMod}}
\newcommand{\hotimes}{\otimes_{\mathrm{H}}}
\newcommand{\dotimes}{\otimes_{\mathrm{D}}}
\DeclareMathOperator{\Tw}{Tw}
\DeclareMathOperator{\Detw}{Detw}
\newcommand{\unit}{\mathsf{1}}

\newcommand{\gk}{\mathfrak{g}}

\newcommand{\qi}{\xrightarrow{ \,\smash{\raisebox{-0.65ex}{\ensuremath{\scriptstyle\sim}}}\,}}

\newcommand{\id}{\mathrm{id}}
\newcommand{\pl}{\mathrm{pl}}
\newcommand{\Q}{\mathbb{Q}}

\DeclareMathOperator{\Conf}{Conf}
\DeclareMathOperator{\UConf}{UConf}
\DeclareMathOperator{\TR}{TR}
\DeclareMathOperator{\BAR}{Bar}
\DeclareMathOperator{\indec}{\mathsf{indec}}
\DeclareMathOperator{\B}{B}
\DeclareMathOperator{\OB}{\Omega B}
\DeclareMathOperator{\OBi}{\Omega_{\iota} B_{\iota}}
\NewDocumentCommand{\ch}{st~}{
  \IfBooleanTF{#1}{
    \operatorname{\IfBooleanTF{#2}{\tilde{N}}{N}}^{*}
  }{
    \operatorname{\IfBooleanTF{#2}{\tilde{N}}{N}}_{*}
}}
\NewDocumentCommand{\adjunction}{mmmm}{
  \begin{tikzcd}[column sep=5pc, ampersand replacement=\&]
    #1
    \arrow[r, shift left=1ex, "#3"{name=F}]
    \&
    #2 \arrow[l, shift left=1ex, "#4"{name=U}]
    \arrow[phantom, from=F, to=U, , "\dashv" rotate=-90]
  \end{tikzcd}
}

\begin{document}
\title{Homology of configuration spaces in positive characteristic via point-set constructions}
\author{Najib Idrissi\thanks{Université Paris Cité, Sorbonne Université, CNRS, IMJ-PRG, F-75013 Paris, France} \and Victor Roca i Lucio\thanks{Université Paris Cité, Sorbonne Université, CNRS, IMJ-PRG, F-75013 Paris, France}}
\date{June 25, 2026}
\maketitle

\begin{abstract}
  The first goal of this paper is to provide concrete chain complexes computing the homology of (unordered) configuration spaces of manifolds in positive characteristic, lifting a theorem by Knudsen to the model category level. We make them fully explicit and provide a computer program to compute their homology. Our methods also allow us to construct several new spectral sequences converging to these homology groups. Finally, we conjecture that this equivalence of chain complexes can be promoted to an equivalence of \emph{twisted} $\EE_\infty$-coalgebras in right $\EE_d$-modules, and we explain how this conjecture would imply the homotopy invariance of the $\EE_d$-homotopy type of configuration spaces in positive characteristic via new ``twist'' and ``detwist'' functors.
\end{abstract}

\tableofcontents

\section{Introduction}

Configuration spaces of points are classical objects of study in algebraic topology, with applications in various fields of mathematics and physics.
Given a topological space $M$, its $k$-th ordered configuration space is defined as:
\begin{equation}
  \Conf_k(M) \coloneqq \{(x_1, \dots, x_k) \in M^k \mid x_i \neq x_j \text{ for } i \neq j\}~.
\end{equation}
The symmetric group $\Sym_k$ acts freely on $\Conf_k(M)$ by permuting the points, and the $k$-th unordered configuration space is defined as the quotient $\UConf_k(M) \coloneqq \Conf_k(M)/\Sym_k$.
We refer to~\cite{knudsen_configuration_2018,kallel_configuration_2025} for recent surveys on the topic.

The homology of configuration spaces has been widely studied, although a complete understanding remains elusive in many cases.
Simple questions such as ``what is the $i$-th Betti number of $\UConf_k(M)$?'' often have difficult answers.
In addition, knowledge of the homotopy type of $M$ is not sufficient to determine the homotopy type of its configuration spaces~\cite{longoni_configuration_2005}.
Some results are known in characteristic zero~\cite{kriz_rational_1994,totaro_configuration_1996,Idrissi2019,CamposWillwacher2023,campos_configuration_2024}, but the general case is much less understood (see, e.g., \cite{levitt_spaces_1995} for $k=2$ when $M$ is $2$-connected).

In this article, we provide new tools to study the homology of configuration spaces in positive characteristic based on results of~\cite{Knudsen18}.
In particular, we provide explicit chain complexes (``point-set models'') whose homology computes the homology of labeled configuration spaces of parallelizable manifolds.
These labeled configuration spaces can recover unordered configuration spaces (with trivial labels), ordered configuration spaces, and ``everything in between.''
While some computations are known for $p = 2$ or odd-dimensional manifolds~\cite{BODIGHEIMER1989111}, the general case remains difficult.

The main tools that we use to define these point-set models from Knudsen's results are the recent operadic constructions of~\cite{LeGrignouRocaiLucio2023}, as well as explicit models for the $\EE_\infty$ operad such as the Barratt--Eccles operad $\E$~\cite{McClureSmith2003,BergerFresse04}.
A key tool we use is the fact that the cooperad $\B(\sLie \otimes \E)$ is quasi-planar (Section~\ref{sec:quasi-planar-cooperads}).
Therefore, the bar-cobar construction on $\sLie \otimes \E$ a model for the spectral Lie operad $\sL$.
Moreover, we can use the functorial cofibrant resolutions given by the operadic calculus in \cite{LeGrignouRocaiLucio2023} to give point-set models for the derived indecomposables functor in terms of an operadic bar construction.

Let us say that $M$ is of finite type if its homology is of finite dimension in each degree.

\begin{theoremintro}[{See Theorem~\ref{thm:main}}]\label{thmA}
  Let $\kk$ be an arbitrary field.
  Let $M$ be a parallelizable $d$-manifold of finite type, and let $V$ be a chain complex over $\kk$. There is a zig-zag of quasi-isomorphisms of weighted chain complexes over $\kk$:
  \begin{equation}\label{equation: intro zig-zag}
    \bigoplus_{k \geq 1} \ch(\Conf_k(M)) \otimes_{h\Sym_k} V^{\otimes k} \simeq \B_\iota \bigl( \ch*~(M^+) \otimes \OB(\sLie \otimes \E) \circ (s^d V)\bigr)~,
  \end{equation}
  where $\ch*~(M^+)$ is the reduced normalized cochain complex of the one-point compactification of $M$, viewed as a nonunital $\E$-algebra; $\OB(\sLie \otimes \E) \circ (s^d V)$ is a model for the free spectral Lie algebra on the $d$-fold suspension of $V$; $\B_\iota$ is the operadic bar construction; on the left-hand side, the weight is given by $k$, and on the right-hand side, the weight is given by considering $V$ to be in weight $1$.
\end{theoremintro}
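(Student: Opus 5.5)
The plan is to start from Knudsen's theorem~\cite{Knudsen18} and replace each $\infty$-categorical object in it by an explicit point-set model. For a parallelizable (hence framed) $d$-manifold $M$, Knudsen identifies the labeled configuration space chains $\bigoplus_k \ch(\Conf_k(M))\otimes_{h\Sym_k} V^{\otimes k}$ with the factorization homology $\int_M$ of the free $\EE_d$-algebra on $V$. He then rewrites this as Lie algebra homology:
\begin{equation*}
  \int_M \mathrm{Free}_{\EE_d}(V) \simeq C^{\mathcal{L}}\bigl(\mathrm{Map}_c(M, \mathrm{Free}_{\sL}(s^{d}V))\bigr)~,
\end{equation*}
where $C^{\mathcal{L}}$ denotes (suitably shifted) spectral Lie algebra homology, i.e.\ derived indecomposables, and $\mathrm{Map}_c$ is the compactly supported mapping object, i.e.\ the cotensor with $M^+$ relative to the basepoint. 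This equivalence respects weights, with $V$ placed in weight $1$. The proof therefore splits into three modelling steps plus a comparison.

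\textbf{Step 1: the free spectral Lie algebra.} By Section~\ref{sec:quasi-planar-cooperads}, the cooperad $\B(\sLie\otimes\E)$ is quasi-planar. Hence the results of~\cite{LeGrignouRocaiLucio2023} apply: $\OB(\sLie\otimes\E)$ is a cofibrant operad that models $\sL$, and its algebras carry a model structure presenting spectral Lie algebras over $\kk$. The free algebra $\OB(\sLie\otimes\E)\circ(s^dV)$ then models $\mathrm{Free}_{\sL}(s^dV)$. Since this free functor preserves all objects, it is already derived, and no cofibrancy assumption on $V$ is needed over a field.

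\textbf{Step 2: the mapping object.} I would model $\mathrm{Map}_c(M,\mathfrak g)$ as $\ch*~(M^+)\otimes\mathfrak g$. Here $\ch*~(M^+)$ is a nonunital $\E$-algebra, and $\E$ is a Hopf operad via the diagonal of the Barratt--Eccles operad. Tensoring with the operad $\OB(\sLie\otimes\E)$ therefore uses the map $\OB(\sLie\otimes\E)\to \OB(\sLie\otimes\E)\otimes\E$ (or an equivalent Hadamard-type structure), which gives the tensor product an $\OB(\sLie\otimes\E)$-algebra structure. To check that this presents the cotensor, I would use the finite type hypothesis: it lets me identify $\ch*~(M^+)\otimes\mathfrak g$ with mapping out of the dual $\E$-coalgebra $\ch~(M^+)$. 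I would then argue by cell induction on $M^+$ that both sides send homotopy pushouts to homotopy pullbacks and agree on spheres.

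\textbf{Step 3: derived indecomposables.} The operadic bar construction $\B_\iota$ from~\cite{LeGrignouRocaiLucio2023} provides a functorial cofibrant resolution $\OBi A\qi A$, and its indecomposables equal $\B_\iota A$ up to a shift. Thus $\B_\iota$ computes $C^{\mathcal L}$ on any algebra, with no cofibrancy needed on the input. Composing Steps 1--3 and tracking weights, which are preserved by every construction since everything is built from $V$, yields the zig-zag.

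I expect Step 2 to be the main obstacle. It requires the tensor product of an $\E$-algebra with an algebra over $\OB(\sLie\otimes\E)$ to be homotopically meaningful, i.e.\ that it models Knudsen's $\infty$-categorical cotensor in positive characteristic. Addressing this requires both a point-set Hopf structure compatible with the bar-cobar resolution and a comparison of the cochain model with the actual mapping spectrum. My first attempt would be to reduce to $M^+$ being a finite cell complex up to weak equivalence, which is where finite type enters. I would then verify the claim on $S^n$ and use that both constructions are exact in the space variable.
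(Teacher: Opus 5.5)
Your architecture matches the paper's: Knudsen's theorem, a point-set free spectral Lie algebra, a point-set cotensor with $M^+$, and derived indecomposables computed by $\B_\iota$ through the functorial resolution $\OBi A \qi A$. Your Step~3 is exactly the paper's argument. The gap is in Step~2, which you flag but do not close.

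\textbf{The algebra structure.} The first missing piece is the $\OB(\sLie\otimes\E)$-algebra structure itself. The Hopf diagonal of $\E$ makes $\sLie\otimes\E$ an $\E$-comodule. That structure does not pass through the bar--cobar construction, because $\B$ and $\Omega$ do not commute with Hadamard products. So nothing induces an operad map $\OB(\sLie\otimes\E)\to\Enu\otimes\OB(\sLie\otimes\E)$, and your ``therefore'' is unjustified. This is exactly where the paper uses quasi-planarity of $\B(\sLie\otimes\E)$. By \cite[Theorem~2]{LeGrignouRocaiLucio2023}, the cobar construction of a quasi-planar cooperad $\CC$ with $\CC(0)=0$ carries a coassociative, counital $\Enu$-comodule structure (Proposition~\ref{prop:comodule structure}). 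Pulling back along it is what makes $\ch*~(X)\otimes\gk$ an $\OB(\sLie\otimes\E)$-algebra (Proposition~\ref{prop:tensor bifunctor}). In your Step~1 you credit quasi-planarity for the rectification, but the paper uses Hauseng's theorem for admissible $\Sym$-cofibrant operads there. Quasi-planarity is what provides the comodule map and the resolution $\OBi A\qi A$. If you only want the homotopical statement, a non-explicit substitute would do: take a section of the surjective quasi-isomorphism $\Enu\otimes\OB(\sLie\otimes\E)\to\Com\otimes\OB(\sLie\otimes\E)=\OB(\sLie\otimes\E)$. It exists because $\OB(\sLie\otimes\E)$ is cofibrant, and it gives a counital (though not necessarily coassociative) map of the required shape. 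Either way, some such input has to be supplied.

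\textbf{The comparison with the cotensor.} The second missing piece is the comparison with the $\infty$-categorical cotensor. Knowing that both functors turn homotopy pushouts into homotopy pullbacks and ``agree on spheres'' objectwise does not identify them without a natural comparison map. Moreover, identifying $\ch*~(S^n)\otimes\gk$ with $\gk^{S^n}$ as spectral Lie algebras is no easier than the general claim, so it is not a genuine base case. The paper instead uses simplicial frames:
\begin{itemize}
\item $\Delta^n\mapsto\ch*(\Delta^n)\otimes\gk$ is a simplicial frame on $\gk$, because $\gk\to\ch*(\Delta^n)\otimes\gk$ is a quasi-isomorphism ($\ch*(\Delta^n)$ is acyclic and tensoring over a field is exact);
\item it is Reedy fibrant, because the matching map $\ch*(\Delta^n)\to\ch*(\partial\Delta^n)$ is surjective and fibrations of $\OB(\sLie\otimes\E)$-algebras are degreewise surjections;
\item such a frame computes the homotopy cotensor, $\ch*$ turns colimits of simplices into limits, and taking fibers at the basepoint gives $\gk^X\simeq\ch*~(X)\otimes\gk$.
\end{itemize}
Your exactness idea can be rescued by replacing ``agree on spheres'' with the universal property of finite pointed spaces. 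A finite-limit-preserving functor out of their opposite category is determined by its value at $S^0$, and there $\ch*~(S^0)\otimes\gk\cong\gk$ by counitality. That route still depends on the comodule structure from the previous paragraph.
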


Using factorization homology, Knudsen obtained rational Lie models for unordered configuration spaces~\cite{Knudsen18,Knudsen17}.
These models were used to reprove and refine homological stability~\cite{Knudsen17} and enabled the computation of Betti numbers for surfaces~\cite{DrummondColeKnudsen17}.
To address positive characteristic, subsequent works developed spectral sequences relying heavily on the power operations of spectral Lie algebras~\cite{BrantnerKnudsenHahn2019,zhangQuillenHomologySpectral2025}.
In particular, \textcite{ChenZhang2022} applied a topological André--Quillen (TAQ) spectral sequence (described by \textcite{zhangQuillenHomologySpectral2025}) to prove that configuration spaces of surfaces have no $p$-torsion, recovering previous results obtained via $E(n)$-local computations in \textcite{BrantnerKnudsenHahn2019}.
These approaches use the bar spectral sequence whose $E_2$ page is determined by the homotopy groups of $\sL(\Sigma^d X)^{M^{+}}$.
They thus depend on knowledge of these homotopy groups of $\sL(\Sigma^d X)$, as well as on knowledge of power operations on spectral Lie algebras (see~\cite{antolin-camarena_mod_2020,kjaer_odd_2018}).

Since our point-set models are explicit, we can leverage their concreteness to perform computations via new methods.
For instance, we can consider the natural filtration of the Barratt--Eccles operad by complexity~\cite{BergerFresse04}, which gives models for the $\mathbb{E}_n$ operads as suboperads of the $\mathbb{E}_\infty$ operad, and obtain a new filtration and spectral sequence that converges to the homology of these configuration spaces. We can also vary the point-set models and obtain smaller complexes by replacing the Barratt--Eccles model with the surjection operad model of \cite{McClureSmith2003,BergerFresse04}. Moreover, we develop a computer program that encodes these complexes and allows us to carry out explicit computations in the case where $M = \mathbb{R}^n$. Let us stress an important point: the only ``unknown'' aspect, for a general $M$, that is not fully determined in our approach is $\ch*~(M^+)$ together with its $\mathbb{E}_\infty$-coalgebra structure. While we do compute it explicitly for $M = \mathbb{R}^n$, this is precisely, in our opinion, the key element needed to carry the approach and the computations of the present paper further, and it should be the subject of future work.

\medskip

One of the main interests of Theorem~\ref{thmA}, for us, is its relation to the study of the $p$-adic homotopy type of configuration spaces and, in particular, to whether it is an invariant of the $p$-adic homotopy type of the manifold $M$ when $M$ is simply connected. By the work of Mandell in \textcite{Mandell2001}, we know that (finite-type) nilpotent $p$-adic homotopy types can be fully faithfully encoded by $\EE_\infty$-algebras over an algebraically closed field of characteristic $p$. Moreover, one can remove the finite-type assumption if one works with $\EE_\infty$-coalgebras, using the results of \cite{Bachmann2024}.

\medskip

Both sides in Theorem~\ref{thmA} have the flavor of an $\EE_\infty$-coalgebra; in particular, for every $k \geq 0$, $\ch(\Conf_k(M))$ is naturally an $\EE_\infty$-coalgebra, and it encodes the $p$-adic homotopy type of $\Conf_k(M)$. However, on the right-hand side, it is not clear how to make this object live in a category of $\EE_\infty$-coalgebras.

\medskip

Our second goal in this paper is to leverage Theorem~\ref{thmA} to construct an explicit framework in which one can make precise conjectures related to the homotopy invariance of configuration spaces in positive characteristic. First, we observe that while the right-hand side of Theorem~\ref{thmA} is not a priori an $\EE_\infty$-coalgebra at every weight $k \geq 0$, the whole collection can be lifted to a symmetric sequence endowed with a \emph{twisted} $\EE_\infty$-coalgebra structure. Then, we build what we call the ``twist'' and ``detwist'' functors, which relate the monoidal structures on symmetric sequences that we are interested in, namely, the arity-wise tensor product and the Day convolution tensor product. Using the underlying action of the $\EE_d$-operad on the symmetric sequence $\ch(\Conf_k(M))$, we twist its $\EE_\infty$-coalgebra structure, which allows us to conjecture the following.

\begin{conjectureintro}[Conjectures~\ref{conj:e-infty-coalg} and \ref{conj:e-infty-coalg strong}]
  Let $M$ be a simply connected parallelizable manifold of finite type and dimension $d$. The zig-zag of quasi-isomorphisms in \eqref{eq:conj-model} can be promoted to a zig-zag of equivalences
  \[
    \Tw(\ch(\mathsf{FM}_M)) \simeq \B_\iota \bigl( \ch*~(M^+) \otimes \OB(\sLie \otimes \E) \circ (s^d \I)\bigr),
  \]
  of twisted $\EE_\infty$-coalgebras in right $\EE_d$-modules.
\end{conjectureintro}

We first explain how this conjecture holds in the rational case and how it implies the homotopy invariance results of \cite{Idrissi2019,CamposWillwacher2023}. Finally, we explain what types of consequences the $p$-adic version would have for the homotopy invariance of configuration spaces: namely, while it would not imply full homotopy invariance (due to the lack of a fully symmetric detwist functor, as explained in Remark~\ref{rmk: formality implies fully symmetric detwist}), it would still imply the invariance of the homotopy type of $\ch(\Conf_k(M))$ as an $\EE_d$-coalgebra. Our hope is that this framework, combined with the very explicit objects constructed in this paper, can prove useful to tackle the many open problems that remain about the $p$-adic homotopy theory of configuration spaces.

\subsection{Conventions and notation}

In what follows, $\kk$ denotes a fixed field.
We take all chain complexes to be over $\kk$.
Given a list of variables $x_1, \dots, x_n$, we denote by $\kk\langle x_1, \dots, x_n \rangle$ the $n$-dimensional vector space freely generated by these elements.
We also denote linear duality by $(-)^\vee = \hom(-, \kk)$.

Given a simplicial set $X$, we let $\ch(X)$ be the chain complex of normalized (cellular) chains of $X$ over $\kk$, and $\ch*(X)$ the dual cochain complex.
When $X$ is pointed, we moreover set $\ch*~(X) \coloneqq \ker\bigl(\ch*(X) \to \kk\bigr)$ and $\ch~(X) \coloneqq \operatorname{coker}\bigl(\kk \to \ch(X)\bigr)$.

Throughout the paper, we let $M$ denote a parallelizable manifold of dimension $d$, and we let $X$ be any spectrum.
The unordered configuration spectrum of $k$ points labeled by $X$ is defined as
\[
  \B_k(M;X) \coloneqq \Sigma_{+}^{\infty} \Conf_k(M) \otimes_{h\Sym_k} X^{\otimes k}~,
\]
where $\Conf_k(M) \coloneqq \{ \underline{x} \in M^k \mid i \neq j \implies x_i \neq x_j \}$ denotes the ordered configuration space of $k$ points in $M$, $\Sigma_{+}^{\infty}$ the infinite suspension functor, and $\otimes$ the smash product of spectra.
We also let $\UConf_k(M) \coloneqq \Conf_k(M)/\Sym_k$ denote the unordered configuration space of $k$ points in $M$.

\subsection{Acknowledgements}

The authors acknowledge support from project ANR-22-CE40-0008 SHoCoS, ANR-20-CE40-0016 HighAGT, and the IdEx University of Paris ANR-18-IDEX-0001.
N.\ I.\ acknowledges support from the Institut Universitaire de France (IUF).
We thank Adela Zhang and Connor Malin for useful discussion.

The software package ComCH~\cite{medina-mardones_comch_nodate} was used for some computer calculations.
During the preparation of this work, the authors used Claude, Gemini, and GPT LLMs to assist with writing and debugging the source code used for the numerical computations. On the contrary, the math and the writing of the paper were done by humans and no LLMs were involved.
The authors accept full responsibility for the accuracy and correctness of the resulting code and data.

\section{A point-set model for unordered configuration spaces in positive characteristic}

In this section, we lift Knudsen's theorem~\cite{Knudsen18} about the stable homotopy type of unordered configuration spaces of parallelizable manifolds to the level of model categories when working over a field. This allows us to provide an explicit chain complex whose homology is the homology of these unordered configuration spaces, possibly labeled by a chain complex.

\subsection{Point-set models for spectral Lie algebras}
We start by choosing a point-set model for the $\infty$-category of spectral Lie algebras in chain complexes over a field $\kk$ that will be convenient for our purposes. Spectral Lie algebras can be defined as algebras over the enriched spectral Lie $\infty$-operad $\sL$. This notion was first introduced in~\cite{salvatore_configuration_1998,ching_bar_2005}; for an $\infty$-categorical treatment, we refer, for instance, to \cite[Section 5]{HeutsSurvey}.

Let $\sLie$ denote the shifted Lie operad over $\kk$, generated by a single arity $2$ operation of degree $-1$, which corresponds to the Lie bracket.
Let $\E$ denote the Barratt--Eccles operad~\cite{BergerFresse04} and $\Enu$ its non-unital version, where $\Enu(0) \cong 0$.
The Hadamard tensor product $\sLie \otimes \E$ gives an $\Sym$-projective resolution of the operad $\sLie$.
Applying the operadic bar-cobar functors yields a dg operad $\OB(\sLie \otimes \E)$ which is a cofibrant resolution of $\sLie$ in the semi-model structure of dg operads.
The category of dg $\OB(\sLie \otimes \E)$-algebras thus admits a transferred model structure from the underlying category of chain complexes over $\kk$, where weak equivalences are given by quasi-isomorphisms and fibrations by degree-wise surjections.

\begin{notation}
  Note that $\sLie(0) = 0$; thus $\sLie \otimes \E = \sLie \otimes \Enu$ and both operads are augmented. For simplicity, we will continue writing $\OB(\sLie \otimes \E)$ instead of $\OB(\sLie \otimes \Enu)$.
  While they are equal, the former is more convenient, even though the latter is more precise.
\end{notation}

\begin{proposition}\label{prop:rectification of spectral Lie algebras}
  There is an equivalence of $\infty$-categories
  \[
    \dg\text{-}\OB(\sLie \otimes \E)\text{-}\alg~[\mathrm{q.iso}^{-1}] \simeq \Alg_{\sL}(\smod_\kk)~,
  \]
  between the $\infty$-category of $\OB(\sLie \otimes \E)$-algebras localized at quasi-isomorphisms and the $\infty$-category of spectral Lie algebras in $\kk$-modules.
\end{proposition}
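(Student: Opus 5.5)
The plan is to reduce the statement to a general rectification result for algebras over $\Sym$-cofibrant (or $\Sym$-projective) dg operads, and then to identify the $\infty$-operad presented by $\OB(\sLie \otimes \E)$ with the spectral Lie operad $\sL$ in $\kk$-modules. We proceed in three steps.

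\textbf{Step 1: the dg operad $\OB(\sLie \otimes \E)$ presents $\sL \otimes \kk$.} Recall that $\sL$ is defined (following \cite{salvatore_configuration_1998,ching_bar_2005}) as the Koszul/bar dual of the nonunital commutative $\infty$-operad, i.e.\ $\sL \simeq \B(\mathsf{Com}^{\mathrm{nu}})^\vee$ suitably shifted. Base-changed to $\kk$, its underlying symmetric sequence is $\sLie$, but the $\Sym_k$-actions must be taken homotopically. The dg operad $\sLie \otimes \E$ is arity-wise $\Sym$-projective and quasi-isomorphic to $\sLie$, so it presents the $\infty$-operad whose algebras are ``derived'' shifted Lie algebras. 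Concretely, one can argue by bar-duality: the $\infty$-categorical bar construction on the nonunital $\EE_\infty$-operad $\Enu$ in $\kk$-modules is modelled by the dg cooperad $\B(\Enu)$ because $\Enu$ is $\Sym$-projective. Its linear dual, $\B(\Enu)^\vee$, is quasi-isomorphic, as an operad with free actions, to $\sLie \otimes \E$; this is the arity-wise homology computation $H(\B\Com) \cong \Com^{\text{!`}}$ together with $\Sym$-projectivity. Since $\sL \otimes \kk$ is the $\infty$-categorical dual of $\B(\Enu)$, this identifies it with the $\infty$-operad underlying $\sLie \otimes \E$, and hence with that underlying its cofibrant replacement $\OB(\sLie \otimes \E)$.

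\textbf{Step 2: rectification.} For a dg operad $\mathcal{P}$ that is $\Sym$-cofibrant in each arity (over an arbitrary field), the natural functor $\dg\text{-}\mathcal{P}\text{-}\alg[\mathrm{q.iso}^{-1}] \to \Alg_{\mathcal{P}}(\smod_\kk)$ is an equivalence. This is Haugseng's and Pavlov--Scholbach's rectification theorem (alternatively \cite[4.1.4.4]{HA}-style arguments). The key input is that free $\mathcal{P}$-algebras $\mathcal{P}(n) \otimes_{\Sym_n} V^{\otimes n}$ compute homotopy orbits, which holds precisely because $\mathcal{P}(n)$ is $\Sym_n$-projective. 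The operad $\OB(\sLie \otimes \E)$ is quasi-free on the $\Sym$-projective cooperad $\B(\sLie \otimes \E)$, hence cofibrant and in particular $\Sym$-cofibrant. The transferred semi-model structure then gives the left-hand side, and both sides are monadic over $\smod_\kk$ with monads agreeing on underlying objects. A Barr--Beck--Lurie comparison of monads finishes the step.

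\textbf{Step 3: combine.} An equivalence of $\infty$-operads induces an equivalence of algebra $\infty$-categories, so Steps 1 and 2 give the stated equivalence. The main obstacle is Step 1: making precise that the point-set dual of the bar construction models the $\infty$-categorical Koszul dual $\sL$, including finiteness conditions for dualization (each $\B(\Enu)(k)$ is degreewise finite-dimensional, so linear duality behaves well) and compatibility of the point-set and $\infty$-categorical bar constructions. I would first try to cite an existing comparison, e.g.\ Ching's or Heuts's identification of $\sL$ with the derivatives of the identity, together with Brantner--Mathew's description of spectral partition Lie algebras, rather than reprove it.
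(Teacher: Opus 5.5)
Your Step 2 is the paper's proof: the paper applies Haugseng's rectification theorem (his Theorem~4.10), noting that $\OB(\sLie \otimes \E)$ is admissible and $\Sym$-free, hence flat over a field, and it tacitly takes for granted the identification of the underlying $\infty$-operad with $\kk \otimes \sL$ that your Step 1 spells out. That extra step is sound in outline, but the operad-level equivalence must come from an actual zig-zag of operad maps, such as $\B(\Enu)^\vee \xleftarrow{\sim} \B(\Com)^\vee \xrightarrow{\sim} \sLie \xleftarrow{\sim} \sLie \otimes \E$ (using that $\Com$ is Koszul over any field), since matching homology plus $\Sym$-projectivity alone does not give it.
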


\begin{proof}
  This follows from \cite[Theorem 4.10]{Hauseng19}, since $\OB(\sLie \otimes \E)$ is both admissible and $\Sym$-cofibrant (in fact, $\Sym$-free); it is therefore flat in the terminology of \emph{op.cit.} since the base category of chain complexes over a field $\kk$ is trivially a category equipped with a subcategory of flat objects.
\end{proof}

\begin{remark}
  Any admissible $\Sym$-projective resolution of the operad $\sLie$ would work in the above proposition.
  We choose this particular resolution to be able to apply the results of~\cite{LeGrignouRocaiLucio2023}, as the cooperad $\B(\sLie \otimes \E)$ is \emph{quasi-planar} (Section~\ref{sec:quasi-planar-cooperads}).
\end{remark}

\subsection{Point-set models for the \texorpdfstring{$\infty$}{infinity}-categorical derived indecomposables functor}

We now provide a point-set model for the $\infty$-categorical derived indecomposables functor. As a byproduct, we obtain an explicit chain complex that models the $\infty$-categorical bar construction appearing in Knudsen's theorem.

\medskip

We begin by recalling the $\infty$-categorical construction of the derived indecomposables functor in the particular case of spectral Lie algebras. We refer to \cite[Section 3]{FrancisGaitsgory12} or to \cite[Section 4]{HeutsSurvey} for more details. The spectral Lie operad $\sL$ is canonically augmented, as it is trivial in arity $1$. Its augmentation $\sL \longrightarrow \mathrm{I}$ induces an $\infty$-categorical adjunction
\[
  \adjunction{\Alg_{\sL}(\smod_\kk)}{\smod_\kk}{\indec}{\mathsf{triv}}
\]
between the base $\infty$-category of chain complexes over $\kk$ up to quasi-isomorphisms and the $\infty$-category of spectral Lie algebras over $\kk$. The right adjoint $\mathsf{triv}$ endows a chain complex $V$ with a trivial spectral Lie algebra structure. The left adjoint $\indec$ is called the derived indecomposable functor. Its homology is often called the topological André--Quillen (TAQ) homology of spectral Lie algebras; see \cite{zhangQuillenHomologySpectral2025}. Over a field of characteristic zero, this homology is computed by the Chevalley--Eilenberg complex; in general, it is more complicated. Our goal here is to provide a point-set model for this derived indecomposable functor using the operadic constructions of \cite{LeGrignouRocaiLucio2023}. In particular, we have the following.

\begin{theorem}[{\cite[Section 3.4, Proof of Theorem C]{Knudsen18}}]\label{thm:knudsen}
  There is a weak equivalence of weighted spectra
  \[
    \bigoplus_{k \geq 1} \B_k(M;X) \simeq \biggl| \BAR_{\bullet}\Bigl(\mathrm{id}, \sL, \sL(\Sigma^d X)^{M^{+}}\Bigr) \biggr|~,
  \]
  where $(-)^{M^+}$ denotes the cotensoring by the one-point compactification of $M$ and where, on the right-hand side, the weight is given by $k \geq 1$, while on the left-hand side, the weight is given by considering $X$ to be in weight $1$.
\end{theorem}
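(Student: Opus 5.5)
The plan is to reconstruct Knudsen's argument: identify both sides with factorization homology and then with a Koszul-duality bar construction. The first step is to express the left-hand side as factorization homology. Because $M$ is parallelizable, it is framed, and we may take factorization homology of $\EE_d$-algebras in spectra. We apply this to the free $\EE_d$-algebra on $X$. The standard computation of factorization homology of free algebras gives
\[
  \int_M \mathrm{Free}_{\EE_d}(X) \simeq \bigoplus_{k\geq 0} \Sigma^\infty_+\Conf_k(M)\otimes_{h\Sym_k} X^{\otimes k}.
\]
This uses the $\EE_d$-operad spaces $\Conf_k(\R^d)\simeq \EE_d(k)$ and the fact that framed embeddings $\coprod_k \R^d \hookrightarrow M$ assemble into the configuration spaces. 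Discarding the weight-$0$ unit leaves the reduced sum $\bigoplus_{k\geq1}\B_k(M;X)$, with weight given by $k$.

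The second step is to pass to spectral Lie algebras. We use the Knudsen/Ayala--Francis enveloping construction. There is a functor $U_M$ from $\sL$-algebras to the target, defined by $U_M(\mathfrak g) = \int_M U_{\EE_d}(\Sigma^{1-d}\ldots)$. In our normalization, the free $\EE_d$-algebra on $X$ is the nonunital $\EE_d$-enveloping algebra of the free spectral Lie algebra $\sL(\Sigma^{d-1}X)$, suitably shifted. This rests on the Koszul self-duality of $\EE_d$ and the identification $\Sigma^{d}\mathrm{Free}_{\EE_d}^{\mathrm{nu}}$ as a Lie-type object. Nonabelian Poincaré duality (Ayala--Francis) then gives
\[
  \int_M U(\mathfrak g) \simeq \indec\bigl(\mathfrak g^{M^+}\bigr)
\]
for the cotensored $\sL$-algebra $\mathfrak g^{M^+}$, where $\indec$ is the derived indecomposables. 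Here the hypothesis that $M$ is parallelizable ensures the twisting by the tangent bundle is trivial.

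The third step is to write the derived indecomposables as the two-sided bar construction $|\BAR_\bullet(\id,\sL,A)|$, which holds for any $\sL$-algebra $A$ by the monadic resolution. Applying this to $A = \sL(\Sigma^d X)^{M^+}$ yields the right-hand side. The weights are tracked because every construction is functorial in $X$ and compatible with the grading by polynomial degree in $X$, so weight $k$ on the left-hand side matches $X$ in weight $1$ on the right-hand side.

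The main obstacle is the nonabelian Poincaré duality step. For a general $\sL$-algebra it requires connectivity hypotheses; for free algebras on $\Sigma^d X$ with $X$ arbitrary, the relevant convergence must instead be checked weight by weight. My first approach would be to work in each fixed weight, where all functors become finite homogeneous, and there reduce to a handle induction on $M$. In that induction, excision for factorization homology corresponds to pullbacks of cotensors $(-)^{U^+}$, and the base case $M=\R^d$ is the Koszul duality $\indec(\sL(\Sigma^dX)^{S^d})\simeq \Sigma^\infty_+\EE_d$-free on $X$. Keeping the degree shifts consistent with the convention $\Sigma^d X$ requires care but is routine.
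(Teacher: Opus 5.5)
Your outline follows the argument the paper cites from Knudsen: identify $\bigoplus_{k\geq 1}\B_k(M;X)$ with reduced factorization homology of the free $\EE_d$-algebra on $X$, recognize that algebra as the $\EE_d$-enveloping algebra of the free spectral Lie algebra, apply Knudsen's duality $\int_M U_d(\mathfrak g)\simeq \mathrm{CE}(\mathfrak g^{M^+})$ for framed $M$, and rewrite Chevalley--Eilenberg chains as $|\BAR_\bullet(\id,\sL,-)|$ via the monadic resolution, which is the same step the paper uses in Corollary~\ref{cor:indec-bar}. One correction: this Lie-algebraic duality is Knudsen's rather than Ayala--Francis's Poincaré/Koszul duality, and as far as I know it holds for all Lie algebras with no connectivity hypothesis, because it is proved by essentially the $\otimes$-excision argument you sketch (the key input being that Chevalley--Eilenberg chains take products of Lie algebras to tensor products and commute with geometric realizations), so your weight-by-weight workaround is unnecessary.
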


The right-hand side in the above theorem can also be described as the derived indecomposables functor applied to the spectral Lie algebra $\sL(\Sigma^d X)^{M^{+}}$:

\begin{corollary}[{\cite{Knudsen18}}]\label{cor:indec-bar}
  Let $M$ be a parallelizable manifold of dimension $d$. There is a weak equivalence of weighted spectra
  \[
    \bigoplus_{k \geq 1} \B_k(M;X) \simeq \indec\Bigl(\sL(\Sigma^d X)^{M^{+}}\Bigr)~,
  \]
  where $\sL(\Sigma^d X)$ denotes the free spectral Lie algebra on $\Sigma^d X$, $(-)^{M^+}$ denotes the cotensoring by the one-point compactification of $M$ (see Section~\ref{sec:infty-categorical-cotensor}); and where on the right-hand side, the weight is given by $k \geq 1$, while on the left-hand side, the weight is given by considering $X$ to be in weight $1$.
\end{corollary}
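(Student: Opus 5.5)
The corollary follows from Theorem~\ref{thm:knudsen} once we identify the geometric realization of the two-sided bar construction $\BAR_\bullet(\mathrm{id}, \sL, A)$, for $A = \sL(\Sigma^d X)^{M^+}$, with $\indec(A)$. I would argue in three steps.

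\emph{Identify the bar construction with the left adjoint.} The augmentation $\sL \to \mathrm{I}$ makes the identity functor of $\smod_\kk$ (more precisely, of spectra) into a right $\sL$-module. The simplicial object $\BAR_\bullet(\mathrm{id}, \sL, A)$ has $n$-simplices $\sL^{\circ n}(A)$. Its face maps come from three sources: the operadic composition of $\sL$, the $\sL$-algebra structure of $A$, and, on the leftmost factor, the augmentation $\sL \to \mathrm{I}$.

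\emph{Compare with a free resolution.} The standard monadic resolution $\BAR_\bullet(\sL, \sL, A) \to A$ is a simplicial resolution of $A$ by free $\sL$-algebras. Its colimit computes $A$ in $\Alg_{\sL}$, since it is a split simplicial object after forgetting to spectra, and the forgetful functor preserves sifted colimits. Now apply $\indec$. It is a left adjoint, so it commutes with geometric realizations. On free algebras it is determined by the adjunction: $\indec(\sL(Y)) \simeq Y$, because $\mathsf{triv}$ composed with the forgetful functor is the identity. Levelwise this gives
\[
  \indec\bigl(\sL \circ \sL^{\circ n}(A)\bigr) \simeq \sL^{\circ n}(A),
\]
and the induced face maps are exactly those of $\BAR_\bullet(\mathrm{id}, \sL, A)$. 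The outermost face map uses the augmentation, since $\indec$ on a map between free algebras is induced by the underlying map of generators after killing the positive-arity part. Hence
\[
  \indec(A) \simeq \bigl|\BAR_\bullet(\mathrm{id}, \sL, A)\bigr| .
\]
This is the standard statement found in \cite[Section 3]{FrancisGaitsgory12} and \cite[Section 4]{HeutsSurvey}, and I would cite it there.

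\emph{Track weights.} The weight grading on $\sL(\Sigma^d X)$ by the number of inputs of $X$ is preserved by the cotensoring $(-)^{M^+}$ and by all face maps. It is therefore preserved by the equivalence, and combining with Theorem~\ref{thm:knudsen} gives the claim as an equivalence of weighted spectra.

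The main obstacle is the identification of the simplicial structure maps in the second step, specifically the last face map of $\indec$ applied to the free resolution. This needs a coherent ($\infty$-categorical) identification, not just a levelwise one. I would handle it by using the monadic description $\indec \simeq \mathrm{I} \circ_{\sL} (-)$, i.e.\ the relative composite product with the augmentation module, where the bar formula is essentially the definition.
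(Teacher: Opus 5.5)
Your proposal is correct and follows essentially the same route as the paper. Both resolve $A$ by the monadic bar construction $|\BAR_\bullet(\sL,\sL,A)|$, use that $\indec$ commutes with geometric realizations and sends free algebras to their generators to obtain $|\BAR_\bullet(\mathrm{id},\sL,A)|$, and then invoke Theorem~\ref{thm:knudsen}. Your remark that the face maps need a coherent identification (e.g.\ via $\indec \simeq \mathrm{I}\circ_{\sL}(-)$) addresses a point the paper passes over silently.
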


\begin{proof}
  This follows from Theorem~\ref{thm:knudsen} and the following general fact.
  Since the $\infty$-category of $\sL$-algebras is monadic, any weighted $\sL$-algebra $A$ is resolved by the geometric realization of the two-sided simplicial bar construction
  \begin{align*}
    A & \simeq \bigl|\BAR_\bullet(\sL,\sL,A) \bigr|,
    & \BAR_\bullet(\sL,\sL,A)                      & \coloneqq \sL^{\circ \bullet} \circ A,
  \end{align*}
  where the weight is induced by the weight of $A$.
  Since the (derived) indecomposables functor $\indec$ commutes with homotopy colimits, we have:
  \[
    \mathbb{L}\indec(A) \simeq \Bigl|\indec\bigl(\BAR_\bullet(\sL,\sL,A)\bigr) \Bigr| \simeq \bigl| \BAR_\bullet(\mathrm{id},\sL,A) \bigr|~,
  \]
  where $\mathrm{id}$ is the identity endofunctor of the underlying $\infty$-category.
\end{proof}

Let $\epsilon: \OB(\sLie \otimes \E) \longrightarrow \mathrm{I}$ be the augmentation of our point-set model for the spectral Lie operad. It induces a Quillen adjunction
\[ \adjunction{\dg~\OB(\sLie \otimes \E)\text{-}\alg}{\dg~\smod}{\mathrm{indec}}{\mathrm{triv}}, \]
since chain complexes over $\kk$ are exactly algebras over the trivial operad $\mathrm{I}$.

\begin{lemma}\label{lemma: derived indecomposables}
  The derived point-set adjunction
  \[ \adjunction{\dg~\OB(\sLie \otimes \E)\text{-}\alg~[\mathrm{q.iso}^{-1}]}{\dg~\smod~[\mathrm{q.iso}^{-1}]}{\mathbb{L}\mathrm{indec}}{\mathrm{triv}} \]
  presents the derived indecomposables adjunction at the $\infty$-categorical level.
\end{lemma}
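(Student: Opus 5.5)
The plan is to compare the right adjoints first and then obtain the statement for the left adjoints by uniqueness of adjoints. The point-set adjunction $\mathrm{indec} \dashv \mathrm{triv}$ is a Quillen adjunction: $\mathrm{triv}$ preserves quasi-isomorphisms and degreewise surjections, which are the weak equivalences and fibrations of the transferred model structure. Hence it induces an adjunction of $\infty$-categories
\[
\mathbb{L}\mathrm{indec} \dashv \mathrm{triv}
\]
between the localizations. Since $\mathrm{triv}$ already preserves all weak equivalences, it needs no deriving.

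By Proposition~\ref{prop:rectification of spectral Lie algebras}, the left localization is identified with $\Alg_{\sL}(\smod_\kk)$, and the right localization is $\smod_\kk$. It therefore suffices to check that, under the equivalence of Proposition~\ref{prop:rectification of spectral Lie algebras}, the functor $\mathrm{triv}$ corresponds to the $\infty$-categorical $\mathsf{triv}$. Once that holds, the two left adjoints agree up to natural equivalence.

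To check this, I would use the naturality of the rectification theorem \cite[Theorem 4.10]{Hauseng19} in the operad. Rectification is compatible with maps of admissible $\Sym$-cofibrant operads: for such a map $f\colon \mathcal{P}\to\mathcal{Q}$, the restriction functor $f^*$ on point-set algebras models restriction along the induced map of $\infty$-operads. Apply this to the augmentation $\epsilon\colon \OB(\sLie\otimes\E)\to \mathrm{I}$. The operad $\mathrm{I}$ is trivially admissible and $\Sym$-cofibrant, and its algebras are just chain complexes. Moreover, $\epsilon$ presents the augmentation $\sL\to\mathrm{I}$ of the spectral Lie operad. Hence $\epsilon^*=\mathrm{triv}$ models $\mathsf{triv}$.

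The main obstacle is making this naturality precise. One has to know that the identification of $\OB(\sLie\otimes\E)$ with $\sL$ (as $\infty$-operads in $\smod_\kk$) is compatible with the augmentations, and that Haugseng's equivalence is natural in operad maps. I would argue this as follows.
\begin{itemize}
\item The augmentation of $\sL$ is essentially unique, because $\sL(1)\simeq \kk$ and $\sL(0)=0$, so any augmentation is determined by the arity-one component.
\item The point-set augmentation $\epsilon$ is the identity in arity $1$, so it realizes this unique augmentation.
\item Haugseng's comparison map is the canonical functor from strict algebras to algebras over the underlying $\infty$-operad, and it is visibly compatible with restriction along operad maps.
\end{itemize}
Given these, the right adjoints agree, and the lemma follows.
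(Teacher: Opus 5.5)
Your proposal is correct and follows the paper's route. The paper proves the lemma in one line: it cites the rectification equivalence of Proposition~\ref{prop:rectification of spectral Lie algebras} and notes that the right adjoints $\mathrm{triv}$ are easily identified, so the left adjoints agree. You spell out the details the paper leaves implicit: the Quillen adjunction, the compatibility of Haugseng's rectification with restriction along the augmentation $\epsilon$, and the essential uniqueness of the augmentation $\sL \to \mathrm{I}$.
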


\begin{proof}
  Follows directly from Proposition \ref{prop:rectification of spectral Lie algebras}, since the right adjoints can be easily identified.
\end{proof}

Recall that the universal twisting morphism $\iota: \B(\sLie \otimes \E) \longrightarrow \OB(\sLie \otimes \E)$ induces a bar-cobar adjunction
\[ \adjunction{\dg~\B(\sLie \otimes \E)\text{-}\mathsf{coalg}}{\dg~\OB(\sLie \otimes \E)\text{-}\alg}{\B_{\iota}}{\Omega_{\iota}}, \]
between the category of dg $\OB(\sLie \otimes \E)$-algebras and the category of dg $\B(\sLie \otimes \E)$-coalgebras.

\begin{proposition}\label{prop: computation of derived indecomposable}
  Let $A$ be a weighted dg $\OB(\sLie \otimes \E)$-algebra.
  The value of the derived indecomposable functor at $A$ can be computed by the chain complex $\B_\iota(A)$, which inherits a weight from $A$.
\end{proposition}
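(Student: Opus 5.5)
By Lemma~\ref{lemma: derived indecomposables}, it suffices to compute $\mathbb{L}\mathrm{indec}(A)$ for the point-set Quillen adjunction $\mathrm{indec} \dashv \mathrm{triv}$. That means finding a cofibrant replacement of $A$ that is functorial and compatible with weights, and then applying the strict indecomposables functor to it.

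For the cofibrant replacement I will use the bar-cobar resolution $\OBi(A) \to A$. This is where quasi-planarity enters: since $\B(\sLie \otimes \E)$ is quasi-planar (Section~\ref{sec:quasi-planar-cooperads}), the results of~\cite{LeGrignouRocaiLucio2023} say two things. First, the counit $\OBi(A) \to A$ is a quasi-isomorphism for every dg $\OB(\sLie \otimes \E)$-algebra $A$. Second, $\Omega_\iota$ sends every dg $\B(\sLie \otimes \E)$-coalgebra to a cofibrant algebra in the transferred model structure. Both statements require only that the cooperad be quasi-planar and that we work over a field; no connectivity hypothesis on $A$ is needed. I expect this to be the main obstacle, namely checking that the hypotheses of~\cite{LeGrignouRocaiLucio2023} apply verbatim in the generality used here. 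If a direct citation is not available, my fallback is the following. The algebra $\Omega_\iota C$ is quasi-free on $C$, and quasi-planarity supplies an exhaustive filtration on $C$ for which the differential is triangular. This exhibits $\Omega_\iota C$ as a transfinite composite of pushouts of generating cofibrations, hence cofibrant. The quasi-isomorphism statement I would handle with a spectral-sequence comparison along the coradical filtration, reducing to the acyclicity of the Koszul complex of the quasi-planar cooperad.

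Given this, $\mathbb{L}\mathrm{indec}(A) \simeq \mathrm{indec}(\OBi(A))$. Since $\Omega_\iota$ produces algebras that are quasi-free on the underlying complex of $\B_\iota A$, with a differential that is the sum of an internal part and a part landing in decomposables, we get $\mathrm{indec}\bigl(\OB(\sLie\otimes\E)\circ \B_\iota A, d\bigr) = \B_\iota A$. Here the induced differential on indecomposables is exactly the differential of $\B_\iota A$, because the twisting term lands in arity at least $2$, i.e.\ in decomposables. More precisely, $\mathrm{indec}$ composes the quotient map $\OB(\sLie\otimes\E)\to \mathrm{I}$ with the free-algebra structure, and on a quasi-free algebra this kills exactly the decomposables. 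So the underlying chain complex of $\B_\iota A$ computes the derived indecomposables.

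Finally, weights. If $A$ is weighted, meaning its underlying complex splits as $\bigoplus_k A^{(k)}$ and the structure maps are additive in weight, then $\B_\iota$, $\Omega_\iota$, the counit and $\mathrm{indec}$ are all built from composite products with arity-graded operads and cooperads. Each of them therefore preserves the weight decomposition, and the counit is a weight-preserving quasi-isomorphism, hence a quasi-isomorphism in each weight. The equivalence $\mathbb{L}\mathrm{indec}(A)\simeq \B_\iota(A)$ is thus one of weighted chain complexes. Combining this with Lemma~\ref{lemma: derived indecomposables} identifies $\B_\iota(A)$ with the $\infty$-categorical derived indecomposables of $A$.
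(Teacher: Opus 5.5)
Your proposal is correct and follows essentially the same route as the paper, which cites \cite[Theorem 7]{LeGrignouRocaiLucio2023} for the weight-preserving functorial cofibrant resolution $\OBi A \qi A$ (available because $\B(\sLie \otimes \E)$ is quasi-planar) and then concludes $\mathbb{L}\mathrm{indec}(A) \simeq \mathrm{indec}(\OBi A) \simeq \B_\iota A$. Your check that the twisting term lands in decomposables, because $\B(\sLie\otimes\E)$ is reduced, spells out the last identification that the paper leaves implicit; the fallback sketches are not needed once the citation is in place.
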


\begin{proof}
  By~\cite[Theorem 7]{LeGrignouRocaiLucio2023}, $A$ admits a functorial cofibrant resolution given by the bar-cobar adjunction relative to $\iota$:
  \[
    \epsilon_A: \OBi A \qi A.
  \]
  Moreover, this morphism preserves the weight.
  So, we can compute that
  \[
    \operatorname{\mathbb{L}indec}(A) \simeq \operatorname{indec}(\OBi A) \simeq \B_{\iota}A. \qedhere
  \]
\end{proof}

\subsection{Point-set models for the cotensoring of spectral Lie algebras over spaces}

The $\infty$-category of spectral Lie algebras is cotensored in spaces. We construct an explicit point-set model for this cotensoring that appears in Knudsen's theorem using the results of \cite{LeGrignouRocaiLucio2023}.

\subsubsection{The \texorpdfstring{$\infty$}{infinity}-categorical cotensor of spectral Lie algebras}\label{sec:infty-categorical-cotensor}

Let $X$ be a space and let $\gk$ be a spectral Lie algebra over $\kk$.
Consider the homotopy limit $\lim_X \gk$, where $\gk$ is seen as a constant diagram over the $\infty$-groupoid $X$~\cite[Section~3.4.3]{Lurie2017}. The underlying spectrum of $\lim_X \gk$ is given by the mapping spectrum $\operatorname{Map}_{\mathrm{Sp}}(\Sigma^{\infty}_+X, \gk)$. Hence, if $X$ is a finite type space, the underlying spectrum of $\lim_X \gk$ is given by $(\Sigma^{\infty}_+X)^* \wedge \gk$.

This construction also admits a pointed version which presents the cotensoring of Lie algebras in pointed spaces.
\[
  \gk^X \coloneqq \operatorname{fib} \bigl( \lim_X \gk \longrightarrow \gk \bigr) ,
\]
where we take the fiber along the map induced by the pointing of $X$. It follows that if $X$ is a finite type pointed space, then the underlying spectrum of $\gk^X$ is given by $(\Sigma^{\infty}X)^* \wedge \gk$. We refer to \cite{BrantnerKnudsenHahn2019}, and in particular to the proof of Proposition 5.8, for more details on these constructions.

\subsubsection{Quasi-planar cooperads}\label{sec:quasi-planar-cooperads}

We will heavily use the results of \cite{LeGrignouRocaiLucio2023} in what follows.
These results hinge on the notion of quasi-planar cooperads, which we now briefly recall.
We will only ever consider non-curved cooperads, so we restrict to this case for simplicity.
In what follows, we let $\bar{\mathbb{T}}$ denote the reduced tree comonad, $\mathrm{Cooperads}^{\mathrm{conil}}$ the category of conilpotent cooperads, and $\mathrm{grCooperad}^{\mathrm{conil}}_{\pl}$ the category of conilpotent graded planar cooperads.
We add the modifier $\mathrm{dg}$ and $\mathrm{gr}$ to indicate differential graded and graded objects respectively.

\begin{definition}[{\cite[Definition~33]{LeGrignouRocaiLucio2023}}]
  An \emph{cooperad ladder} is the data of an ordinal $\alpha$ and a cocontinuous functor $\CC : \alpha \to \mathrm{dgCooperads}^{\mathrm{conil}}, i \mapsto \CC^{(i)}$ such that:
  \begin{enumerate}[nosep]
    \item for every $i \in \alpha$, the map $\CC^{(i)} \to \CC^{(i+1)}$ is injective;
    \item the reduced comultiplication $\Delta : \CC^{(i+1)} \to \bar{\mathbb{T}}\CC^{(i+1)}$ factors through $\bar{\mathbb{T}}\CC^{(i)}$.
  \end{enumerate}
\end{definition}

\begin{definition}[{\cite[Definition~34]{LeGrignouRocaiLucio2023}}]
  A \emph{quasi-planar cooperad ladder} is the data of a small ordinal $\alpha$ and a commutative diagram:
  \begin{equation}
    \begin{tikzcd}
      \alpha \arrow[r, "\CC"] \arrow[d, "\CC_{\mathrm{pl}}"'] & \mathrm{dgCooperads}^{\mathrm{conil}} \arrow[d, "\mathrm{U}"] \\
      \mathrm{grCooperad}^{\mathrm{conil}}_{\pl} \arrow[r, "\Sym \otimes -"] & \mathrm{grCooperads}^{\mathrm{conil}},
    \end{tikzcd}
  \end{equation}
  such that:
  \begin{enumerate}[nosep]
    \item the functor $\CC$ defines a cooperad ladder;
    \item for every $i \in \alpha$, the restriction of the differential of $\CC^{(i+1)}$ to the planar generators of $\CC_{\mathrm{pl}}^{(i+1)}$ factors through:
      \begin{equation}
        d: \CC_{\mathrm{pl}}^{(i+1)} \otimes 1 \longrightarrow \CC_{\mathrm{pl}}^{(i+1)} \otimes 1 + \CC^{(i)} \to \CC^{(i+1)}.
      \end{equation}
  \end{enumerate}
\end{definition}

\begin{definition}[{\cite[Definition~34]{LeGrignouRocaiLucio2023}}]
  A \emph{quasi-planar cooperad} is a conilpotent dg cooperad which is isomorphic to the colimit of a quasi-planar cooperad ladder.
\end{definition}

An important class of examples is given by bar constructions of operads tensored by the Barratt--Eccles operad.
In essence, this follows from the fact that $\E$ admits a planar basis $\E_{\pl}(n)_r$ spanned by elements of the form $(\id_n, \sigma_1, \dots, \sigma_r)$, where $\sigma_i \in \Sym_n$, and most importantly that this planar basis forms a graded sub-operad of $\E$.
We can thus consider a grading on $\B(\PP \otimes \E)$ by the sum of the degrees minus one of the $\E$ components, which gives rise to a cooperad ladder by considering the associated increasing filtration.

\begin{proposition}[{\cite[Propositions~10, 15]{LeGrignouRocaiLucio2023}}]
  Let $\PP$ be a dg-operad such that $\PP(0) = 0$.
  The bar construction $\B(\PP \otimes \E)$ is a quasi-planar cooperad for the filtration above.
\end{proposition}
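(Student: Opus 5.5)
We construct the quasi-planar ladder explicitly and then check the two axioms. As graded collections, $\B(\PP \otimes \E) = \mathbb{T}^c\bigl(s\,\overline{\PP \otimes \E}\bigr)$ is the cofree conilpotent cooperad on the suspended augmentation ideal. The hypothesis $\PP(0) = 0$ ensures that only $\Enu$ appears, that everything is reduced, and that the trees involved have no leafless vertices; in particular the tree comonad is well behaved. The planar part is taken to be
\[
  \CC_{\pl} \coloneqq \mathbb{T}^c_{\pl}\bigl(s\,\overline{\PP \otimes \E_{\pl}}\bigr),
\]
with the planar basis $(\id_n,\sigma_1,\dots,\sigma_r)$ of $\E$. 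Since $\E(n)_r \cong \kk[\Sym_n] \otimes \E_{\pl}(n)_r$ as $\Sym_n$-modules, acting diagonally on the first entry, and the $\PP$-factor is absorbed by the untwisting isomorphism $\kk[\Sym_n]\otimes W \cong \kk[\Sym_n]\otimes W_{\mathrm{triv}}$, we get $\Sym \otimes \CC_{\pl} \cong U\,\B(\PP\otimes\E)$ as graded cooperads. Here we use that $\E_{\pl}$ is a graded suboperad, so that tree compositions of planar generators remain planar. This gives the commutative square in the definition.

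We filter by the grading $w$, defined as the sum over vertices of (degree of the $\E$-component minus one). We set $\CC^{(i)}$ to be the span of the elements of weight $\le i$, using the ordinal $\omega$ with a shift so that arity-$1$ and degree-$0$ pieces sit in the lowest stages. Injectivity of $\CC^{(i)} \to \CC^{(i+1)}$ is automatic. Axiom 2 of a ladder asks that the reduced decomposition of a weight-$(i+1)$ tree lands in $\bar{\mathbb T}\CC^{(i)}$. The decomposition cuts a tree with at least two vertices into subtrees, each of which has strictly smaller weight. This is where the \enquote{minus one} convention enters, and it is the point where the normalization has to be chosen carefully: arity-one vertices and $\E$-degree-$0$ vertices would otherwise contribute weight $\le 0$. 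If necessary, I would add the number of vertices to the weight to force strict decrease. Cocontinuity holds because the union of the stages is everything, since each element has finite weight.

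The main obstacle is the differential condition. The differential of $\B(\PP\otimes\E)$ has three pieces: $d_\PP$, the internal $\E$-differential, and the bar differential contracting internal edges via the operad composition. The differential $d_\PP$ preserves planarity and weight. The bar part contracts an edge and merges two vertices, so the weight changes by $+1$ because one \enquote{$-1$} is lost. I therefore expect to reverse the filtration: the total number of vertices, or the grading counted the other way, should be arranged so that edge contraction lands in the lower stage $\CC^{(i)}$. The most delicate term is the Barratt--Eccles face map $d(\id,\sigma_1,\dots,\sigma_r)=\sum \pm(\dots,\widehat{\sigma_j},\dots)$. Deleting $\sigma_j$ with $j \ge 1$ keeps the first entry $\id$, so the result stays in $\CC_{\pl}\otimes 1$ at the same tree shape and one lower $\E$-degree. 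Deleting the first entry gives $(\sigma_1,\dots)$, which is nonplanar; it equals $\sigma_1\cdot(\id,\sigma_1^{-1}\sigma_2,\dots)$ of strictly lower weight, so it lies in $\CC^{(i)}$.

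Thus the required factorization
\[
  d(\CC_{\pl}^{(i+1)}\otimes 1)\subset \CC_{\pl}^{(i+1)}\otimes 1+\CC^{(i)}
\]
holds once the weight is set up so that the planar-preserving terms are permitted at the same level and the nonplanar terms drop strictly. To make everything consistent, I would use a lexicographic weight: first the total $\E$-degree, then the number of vertices. I would check each of the three differential components against this weight, following \cite[Propositions~10, 15]{LeGrignouRocaiLucio2023}.
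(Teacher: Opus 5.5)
Your overall plan is the right one: use the planar basis $(\id,\sigma_1,\dots,\sigma_r)$, untwist to get $\Sym\otimes\CC_{\pl}\cong U\,\B(\PP\otimes\E)$, then check the decomposition axiom and the three pieces of the differential against a weight. The gap is that the weight you actually set up does not satisfy the ladder axioms, and most of the repairs you float do not either.

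Take $w_-(t)=\sum_v(\deg_\E(v)-1)$, and a tree with two vertices whose $\E$-labels have degree $0$. It has $w_-=-2$, while each of the two pieces of its reduced decomposition has $w_-=-1$.
\begin{itemize}
  \item The decomposition raises the weight, so the second ladder axiom fails. Your claim that the ``minus one'' convention makes pieces strictly smaller is backwards.
  \item Contracting the edge also gives weight $-1$. So the stages are not even closed under the bar differential, hence not sub-dg-cooperads.
  \item $w_-$ is unbounded below, so no shift indexes it by $\omega$.
\end{itemize}
Adding the number of vertices gives $\sum_v\deg_\E(v)$. This still fails strictness whenever the vertices cut off carry $\E$-degree $0$. ``Reversing the filtration'' is the wrong instinct. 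Decomposition and edge contraction both reduce the number of vertices, so once vertices are counted positively they push in the same direction and there is nothing to reconcile.

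The weight you want, and the one the paper intends, is
\[
  w(t)=\sum_{v}\bigl(\deg_\E(v)+1\bigr)=\#\{\text{vertices of } t\}+\text{total } \E\text{-degree},
\]
that is, the total degree of the $s\E$-labels. Here ``degrees minus one'' refers to the indexing $F_n=\{\deg\le n-1\}$; compare the surjection case, whose proof uses that $s\Surj$ is concentrated in positive degrees. With this $w$ the checks are immediate:
\begin{itemize}
  \item Every vertex contributes at least $1$, so each piece of a nontrivial reduced decomposition has strictly smaller weight.
  \item Every face of the $\E$-differential, including the nonplanar first face, lowers $w$ by one.
  \item Edge contraction preserves the total $\E$-degree and removes a vertex, so it lowers $w$ by one.
  \item Only $d_\PP$ stays at the same level, and it preserves planar generators.
\end{itemize}
This gives $d(\CC^{(i+1)}_{\pl}\otimes 1)\subset \CC^{(i+1)}_{\pl}\otimes 1+\CC^{(i)}$ on the ordinal $\omega$.

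The lexicographic weight in your last sentence (total $\E$-degree, then number of vertices) also satisfies the axioms, indexed by $\omega^2$; it is a refinement of the same idea. However, you only propose it without checking it, and the body of your argument is built around $w_-$, which fails.
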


See also Proposition~\ref{prop:b-surj-qpl} below for another class of examples of quasi-planar cooperads.

\begin{remark}
  Again, since $\PP(0) = 0$, we have that $\PP \otimes \E = \PP \otimes \Enu$ and both operads are augmented. Let us point out that here, the operadic bar construction $\B$ refers to the one in \cite{LodayVallette12}, which is not exactly the same as the one considered in \cite{LeGrignouRocaiLucio2023}. However, the proof that $\B(\PP \otimes \E)$ is quasi-planar is exactly the same for this bar construction. See also \cite[Proposition 1 and Remark 2]{LeGrignouRocaiLucio2023b}.
\end{remark}

\subsubsection{The comodule structure of cobar constructions of quasi-planar cooperads}

Recall that the dg operad $\E$ is a Hopf operad, meaning it is a coalgebra for the Hadamard tensor product of operads. Its coalgebra structure is given by the following diagonal map:
\begin{equation}\label{eq:hopf-e}
  \Delta_{\E(r)}: \E(r) \to \E(r) \otimes \E(r), \quad
  (\sigma_0, \ldots, \sigma_n) \mapsto \sum_{i=0}^n (\sigma_0, \ldots, \sigma_i) \otimes (\sigma_i, \ldots, \sigma_n),
\end{equation}
where $(\sigma_0, \dots, \sigma_n) \in \E(r)_n = \kk[\Sym_r^{\times(n+1)}]$. Moreover, $\Enu$ is a Hopf suboperad in an obvious way.

\begin{notation}
  Let $M = (M_{\pl} \otimes \Sym, d_M)$ be a quasi-free dg $\Sym$-module.
  For $\sigma \in \Sym_n$, let $D_\sigma$ be the $\Sym_n$-equivariant degree $-1$ endomorphism of $M(n)$ whose restriction to the generators $M_{\pl}$ is the composition:
  \[
    D_\sigma : M_{\pl}(n) \otimes \{\mathrm{id}\} \hookrightarrow M(n)  \xrightarrow{d_M} M(n) \twoheadrightarrow M_{\pl}(n) \otimes \{\sigma \} \hookrightarrow M_{\pl}(n)~.
  \]
  For any $x$ in $M_{\pl}(n)$, we have $D_\sigma(x \otimes \{\mathrm{id}\}) = d_\sigma (x) \otimes \{\sigma \}$.
  Similarly, for a sequence of permutations $\underline \sigma \coloneqq (\sigma_1, \dots, \sigma_k) \in \Sym_n^k$,
  we set:
  \[
    d_{\underline{\sigma}} \coloneqq d_{\sigma_1} \dots d_{\sigma_k} \quad \text{and} \quad D_{\underline{\sigma}} \coloneqq D_{\sigma_1} \dots D_{\sigma_k}~.
  \]
\end{notation}

\begin{proposition}\label{prop:comodule structure}
  Let $\CC$ be a quasi-planar cooperad with $\CC(0) = 0$.
  Then $\Omega \CC$ admits a comodule structure over the dg Hopf operad $\Enu$:
  \[
    \Delta: \Omega \CC \longrightarrow \Enu \otimes  \Omega \CC
  \]
  which, on the generators, is given by the formula:
  \begin{align*}
    \Delta: s^{-1} \CC_\pl & \to \E_\pl \otimes s^{-1}\CC
    \\
    s^{-1}x \otimes \{\id\}                 & \mapsto \sum_{k \geq 0} \sum_{\underline\sigma \in \Sym_n^k}  \rho(\underline\sigma) \otimes D_{\underline\sigma}(s^{-1}x \otimes \{\id\}),
  \end{align*}
  where $\rho(\underline\sigma) = (\id, \sigma_k, \sigma_{k-1}\sigma_k, \dots, \sigma_1 \dots \sigma_k)$, where $s^{-1} x \in s^{-1} \CC_\pl (n)$ is a planar generator of $\CC$ and where the sum is taken over all sequences of permutations $\underline\sigma = (\sigma_1, \dots, \sigma_k) \in \Sym_n^k$ for all $k \geq 0$.
\end{proposition}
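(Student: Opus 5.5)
We have to show three things: the displayed formula extends to a morphism of dg operads $\Delta:\Omega\CC\to\Enu\otimes\Omega\CC$ (Hadamard product); this morphism is compatible with differentials; and it is coassociative and counital with respect to the Hopf structure \eqref{eq:hopf-e}.

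\textbf{Construction.} Since $\Omega\CC$ is quasi-free on $s^{-1}\bar\CC = s^{-1}\CC_\pl\otimes\Sym$ as a graded operad, and $\Enu\otimes\Omega\CC$ is an operad, it suffices to prescribe $\Delta$ on generators. We use the formula of the statement on $s^{-1}\CC_\pl\otimes\{\id\}$ and extend $\Sym_n$-equivariantly, using the diagonal action on $\E\otimes\Omega\CC$. The sum is finite on each element: by the quasi-planar ladder condition, the non-planar part of the differential of a generator in $\CC^{(i+1)}_\pl$ lands in $\CC^{(i)}$. Hence iterating the operators $D_\sigma$ strictly lowers the ladder index, so $D_{\underline\sigma}(s^{-1}x)$ vanishes for $k$ large. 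This is where transfinite induction along the ladder enters: for limit ordinals, $\CC^{(\lambda)}$ is the colimit of the earlier stages, so every element lives at a successor stage and only finitely many steps occur. The degrees also match, because $\rho(\underline\sigma)\in\E(n)_k$ has degree $k$ and $D_{\underline\sigma}$ has degree $-k$. The component with $k=0$ is $\id\otimes s^{-1}x$, which gives counitality immediately, since the counit $\E\to\kk$ sees only degree-$0$ simplices.

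\textbf{Chain map.} We must check $d\Delta=\Delta d$ on generators. Split the cobar differential $d_\Omega = d_1 + d_2$, where $d_1$ is the internal differential of $\CC$ and $d_2$ is the part coming from the decomposition map. Split $d_1$ further as the planar part $d_{\id}$ plus $\sum_{\sigma\neq\id}D_\sigma$, where the remaining terms lie in lower ladder stages. On the $\E$ side, the simplicial differential of $(\id,\sigma_k,\dots,\sigma_1\cdots\sigma_k)$ removes one vertex. The terms obtained by removing an interior vertex should cancel against the terms where two consecutive $D$'s compose. Removing the first vertex and renormalizing by the $\Sym_n$-action should match applying $d$ to the output, by equivariance. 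This is a telescoping argument of the kind used for the classical twisting-cochain formula. It relies on $d_1^2=0$ written in components, namely $\sum_{\tau\tau'=\sigma} d_\tau d_{\tau'} = 0$ up to signs.

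For the $d_2$ part, $d_2(s^{-1}x)$ is a sum of composites of generators of lower ladder stage, since the decomposition factors through $\bar{\mathbb T}\CC^{(i)}$. Using induction on the ladder, we may assume $\Delta$ is already a chain map on those generators. Then we need that $\Delta$ commutes with operadic composition: the Hadamard composition in $\E$ of the elements $\rho(\cdot)$ must reproduce the $\rho$ of the corresponding block permutations. This uses that $\E_\pl$ is a suboperad and that the decomposition is compatible with the planar structure.

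\textbf{Coassociativity and main obstacle.} Coassociativity amounts to comparing $(\id\otimes\Delta)\Delta$ with $(\Delta_\E\otimes\id)\Delta$. The Alexander--Whitney-type cut $\sum_i(\sigma_0..\sigma_i)\otimes(\sigma_i..\sigma_n)$ applied to $\rho(\underline\sigma)$ splits $\underline\sigma$ into a prefix and a suffix. After reindexing by equivariance, the two tensor factors are $\rho$ of the two subsequences, which matches applying $\Delta$ twice on a generator; it then extends to all of $\Omega\CC$ because both sides are operad morphisms.

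I expect the main obstacle to be the chain-map verification: signs, together with the interaction between the $d_2$ term and the non-planar corrections. There I would first try to phrase $\Delta$ abstractly as $\exp$-like summation of a twisting element in the convolution algebra, so that the identity reduces to a Maurer--Cartan equation.
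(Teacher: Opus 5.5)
\textbf{Comparison with the paper.} The paper does not verify the formula by hand. It quotes \cite[Theorem~2]{LeGrignouRocaiLucio2023} and only remarks that the comodule map lands in $\Enu\otimes\Omega\CC$ because $\CC(0)=0$. Your direct route is viable, and much of it is sound:
\begin{itemize}
\item defining $\Delta$ on free generators and extending equivariantly;
\item finiteness of the sum: each non-identity $D_\sigma$ strictly lowers the ladder stage, and only finitely many $\sigma$ occur at each step;
\item counitality, from the $k=0$ term;
\item coassociativity: split a chain of permutations against the Alexander--Whitney cut, then extend because both composites are operad morphisms.
\end{itemize}
The $d_1$-part of the chain-map check is also right in substance, but you have the faces reversed. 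Removing the \emph{first} vertex $\id$ of $\rho(\underline\sigma)$ gives $\rho(\sigma_1,\dots,\sigma_{k-1})\cdot\sigma_k$, which reproduces $\Delta(D_{\sigma_k}y)$, i.e.\ part of $\Delta(dy)$. It is removing the \emph{last} vertex that reproduces $d_1$ applied to the output. Moreover, degenerate simplices vanish in normalized $\E$. So the planar component $D_{\id}$ occurs in $d_1$ and in $\sum_{\tau\tau'=\pi}D_\tau D_{\tau'}=0$, but never in $\Delta$, and these leftover terms must also be shown to telescope.

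\textbf{The gap: the $d_2$-part.} What must be proved is $(\id\otimes d_2)\Delta(y)=\Delta(d_2y)$ for a single generator $y$.
\begin{itemize}
\item The right-hand side equals $\sum\pm\Delta(s^{-1}x')\circ_j\Delta(s^{-1}x'')$ automatically, since $\Delta$ is defined as an operad morphism out of a free operad. So ``$\Delta$ commutes with composition'' is not the issue.
\item The inductive hypothesis that $\Delta$ is already a chain map on lower-stage generators has no bearing on this identity.
\item The left-hand side is $\sum_{\underline\sigma}\rho(\underline\sigma)\otimes d_2D_{\underline\sigma}(y)$. Here one decomposes \emph{after} applying the non-planar components of the differential to $x$ itself.
\end{itemize}

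\textbf{The missing ingredient.} You need the coderivation identity
\[
\Delta_{(1)}d_\CC=(d_\CC\circ_{(1)}\id+\id\circ_{(1)}d_\CC)\Delta_{(1)},
\]
read off one $\Sym_n$-component at a time. This is legitimate because two-vertex trees decorated by $\CC_\pl\otimes\Sym$ form a free $\Sym_n$-module on planar trees, and $\Delta_{(1)}$ sends planar elements to planar trees. It yields $\Delta_{(1)}D_\sigma=D^{(2)}_\sigma\Delta_{(1)}$, where $D^{(2)}_\sigma$ is the $\sigma$-component of the induced derivation on two-vertex trees.

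Iterating this, the label chains $(\id,\sigma_k,\sigma_{k-1}\sigma_k,\dots)$ traversed by $D_{\underline\sigma}$ followed by decomposition are exactly the lattice-path interleavings of the label chains of the two pieces. The label at each lattice point is the composite $\alpha_i\circ_j\beta_l$ computed in $\Ass$. This is precisely the Eilenberg--Zilber shuffle formula for $\circ_j$ in $\E$, and the shuffle signs must then be matched with the Koszul signs of moving $D$'s acting on different vertices past each other.

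Two consequences for your sketch:
\begin{itemize}
\item $\rho(\underline\tau')\circ_j\rho(\underline\tau'')$ is a signed sum over shuffles, not a single $\rho$ of block permutations.
\item $(\alpha,\beta)\mapsto\alpha\circ_j\beta$ is not a group homomorphism; for instance $(12)\circ_1\id_2$ has order $3$. When the upper vertex picks up a non-planar component, the grafting position moves. So the comparison must be made on vertex sequences, not on the increments $\sigma_i$, as your phrase about block permutations suggests.
\end{itemize}
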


\begin{proof}
  The existence of this map is a direct consequence of \cite[Theorem 2]{LeGrignouRocaiLucio2023}.
  Notice that the $\E$-comodule structure of the theorem factors as a $\Enu$-comodule structure since $\CC(0) = 0$.
  We refer to \cite[Section 2.8]{LeGrignouRocaiLucio2023} for more details.
\end{proof}

\begin{remark}
  When $\CC = \B(\sLie \otimes \E)$, then $\CC_\pl = \mathbb{T}^c(\sLie \otimes \E_\pl)$ as a graded $\mathbb{N}$-module.
\end{remark}

\subsubsection{The point-set cotensor over coalgebras}
Since the conilpotent dg cooperad $\B(\sLie \otimes \E)$ is quasi-planar, its cobar construction $\OB(\sLie \otimes \E)$ admits a canonical comodule structure over the dg Hopf operad $\Enu$ by Proposition~\ref{prop:comodule structure}. By the work of Berger and Fresse~\cite{BergerFresse04}, the (reduced) cellular cochain complex $\ch*(X)$ of any simplicial set $X$ admits a canonical $\Enu$-coalgebra structure. The strategy to present the cotensor of spectral Lie algebras by spaces at the point-set level is to consider the tensor product with $\ch*(X)$ and pull it back along the $\Enu$-comodule structure map.

\begin{proposition}\label{prop:tensor bifunctor}
  The tensor product of chain complexes lifts to a functor
  \[
    (- \otimes -): \dg~\Enu\text{-}\alg \times \dg~\OB(\sLie \otimes \E)\text{-}\alg \longrightarrow \dg~\OB(\sLie \otimes \E)\text{-}\alg.
  \]
\end{proposition}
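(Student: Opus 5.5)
The plan is to define the $\OB(\sLie \otimes \E)$-algebra structure on $C \otimes \gk$, for $C$ an $\Enu$-algebra and $\gk$ an $\OB(\sLie \otimes \E)$-algebra, by pulling back along the comodule map of Proposition~\ref{prop:comodule structure}. Write $\PP \coloneqq \OB(\sLie \otimes \E)$ and let $\Delta : \PP \to \Enu \otimes \PP$ be the $\Enu$-comodule structure. This map is a morphism of dg operads into the Hadamard tensor product. Indeed, that is what a comodule over the Hopf operad $\Enu$ means in \cite[Theorem 2]{LeGrignouRocaiLucio2023}: the coaction is an operad map compatible with the Hopf diagonal~\eqref{eq:hopf-e}. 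The key observation is that the Hadamard tensor product $\Enu \otimes \PP$ acts on $C \otimes \gk$ arity-wise. An element $\mu \otimes \nu \in \Enu(n) \otimes \PP(n)$ acts by
\[
  (c_1 \otimes x_1) \otimes \cdots \otimes (c_n \otimes x_n) \longmapsto \pm\, \mu(c_1, \dots, c_n) \otimes \nu(x_1, \dots, x_n),
\]
with the Koszul sign coming from shuffling the $c_i$ past the $x_j$. Restricting this along $\Delta$ gives the $\PP$-algebra structure on $C \otimes \gk$.

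The steps, in order, are as follows.

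\begin{enumerate}[nosep]
  \item Check that $\mathrm{End}_C \otimes \mathrm{End}_\gk \to \mathrm{End}_{C \otimes \gk}$ is a morphism of dg operads. This is the standard fact that the symmetric monoidal structure of chain complexes makes the endomorphism-operad construction lax monoidal for the Hadamard product. It yields an $(\Enu \otimes \PP)$-algebra structure on $C \otimes \gk$.
  \item Precompose with $\Delta$ to obtain the $\PP$-algebra structure.
  \item Check functoriality. If $f : C \to C'$ is a map of $\Enu$-algebras and $g : \gk \to \gk'$ is a map of $\PP$-algebras, then $f \otimes g$ intertwines the $(\Enu \otimes \PP)$-actions, and hence also the $\PP$-actions. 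Identities and composites are preserved, which gives the bifunctor.
\end{enumerate}

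The main obstacle is making sure that $\Delta$ really is a morphism of dg operads $\PP \to \Enu \otimes \PP$, rather than only a map of $\Sym$-modules given by the formula on generators. The formula in Proposition~\ref{prop:comodule structure} specifies $\Delta$ only on the planar generators $s^{-1}\CC_\pl$. Since $\PP$ is quasi-free, this extends uniquely to a morphism of graded operads: send generators by the formula, extend equivariantly, then multiplicatively. One then has to verify compatibility with the differentials. That compatibility is exactly the content of \cite[Theorem 2]{LeGrignouRocaiLucio2023}, and it is where quasi-planarity (the ladder filtration) enters. I would therefore cite it rather than reprove it. If a direct check were required, the approach would be induction along the quasi-planar ladder. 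On generators of filtration level $i+1$, the differential lands in planar generators plus $\CC^{(i)}$, by the defining property of a quasi-planar ladder. The sum over $\underline\sigma$ then telescopes using the simplicial differential of $\E$ applied to $\rho(\underline\sigma)$.

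One also needs $\Enu$ rather than $\E$, so that no arity $0$ operations are required on $C$; this holds because $\CC(0) = 0$. Finally, the reduced cochains $\ch*~(M^+)$ are naturally only nonunital, which is why the statement is phrased with $\Enu$-algebras.
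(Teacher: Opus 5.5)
Your proposal is correct and follows the same route as the paper: you view $C \otimes \gk$ as an algebra over the Hadamard product $\Enu \otimes \OB(\sLie \otimes \E)$ and pull it back along the comodule map $\Delta$ of Proposition~\ref{prop:comodule structure}. The paper leaves implicit the points you spell out, namely that $\Delta$ is a morphism of dg operads (the content of \cite[Theorem 2]{LeGrignouRocaiLucio2023}) and that the construction is functorial in both variables, so your argument is a more detailed version of the paper's proof.
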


\begin{proof}
  This follows directly from Proposition~\ref{prop:comodule structure}. Indeed, the tensor product of a dg $\Enu$-algebra and a dg $\OB(\sLie \otimes \E)$-algebra is naturally a dg $\Enu \otimes  \OB(\sLie \otimes \E)$-algebra, and pulling back along the comodule structure of the aforementioned proposition endows it with a dg $\OB(\sLie \otimes \E)$-algebra structure.
\end{proof}

\begin{corollary}
  The category of dg $\OB(\sLie \otimes \E)$-algebras is cotensored over pointed simplicial sets using the bifunctor
  \[
    \ch*~(-) \otimes (-) : \mathsf{sSet}^{\op} \times \dg~\OB(\sLie \otimes \E)\text{-}\alg \longrightarrow \dg~\OB(\sLie \otimes \E)\text{-}\alg.
  \]
\end{corollary}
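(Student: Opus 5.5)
The corollary follows from Proposition~\ref{prop:tensor bifunctor} once we produce the relevant $\Enu$-algebras; then we check functoriality and well-definedness.

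First, recall from \cite{BergerFresse04} that the normalized cochains $\ch*(X)$ of a simplicial set carry a natural $\E$-algebra structure, dual to the $\E$-coalgebra structure on $\ch(X)$ given by the Barratt--Eccles interval cut operations. This structure is functorial with respect to all simplicial maps. For a pointed simplicial set $X$, the reduced cochains
\[
\ch*~(X)=\ker\bigl(\ch*(X)\to \kk\bigr)
\]
are the kernel of the restriction to the basepoint. Since the basepoint inclusion $\ast\to X$ is simplicial, this restriction is a morphism of $\E$-algebras. Its kernel is therefore stable under all operations of positive arity, so $\ch*~(X)$ is an $\Enu$-algebra. Note that we must discard $\E(0)$, because the unit does not lie in the kernel. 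A pointed map $f\colon X\to Y$ induces $f^*\colon \ch*(Y)\to\ch*(X)$, which commutes with restriction to basepoints. It therefore restricts to a morphism of $\Enu$-algebras $\ch*~(Y)\to\ch*~(X)$. This gives a functor $\ch*~(-)\colon \mathsf{sSet}_*^{\op}\to \dg~\Enu\text{-}\alg$.

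Next, we compose this functor with the bifunctor of Proposition~\ref{prop:tensor bifunctor}. This yields
\[
\ch*~(-)\otimes(-)\colon \mathsf{sSet}_*^{\op}\times \dg~\OB(\sLie \otimes \E)\text{-}\alg \longrightarrow \dg~\OB(\sLie \otimes \E)\text{-}\alg.
\]
Functoriality in the second variable is clear: a map of $\OB(\sLie\otimes\E)$-algebras $g$ gives $\id\otimes g$, which is a map of $\Enu\otimes\OB(\sLie\otimes\E)$-algebras and hence remains one after pulling back along the comodule map $\Delta$. Functoriality in the first variable is similar, using $f^*\otimes \id$. Bifunctoriality follows because these two actions commute. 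We also check that $\Delta$ is compatible with composition and with the counit, so that the pulled-back structure is a genuine algebra structure. This is exactly the content of Proposition~\ref{prop:comodule structure}: $\Delta$ is a morphism of operads into the Hadamard product $\Enu\otimes\OB(\sLie\otimes\E)$.

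The main point to address is what ``cotensored'' means here: we do not claim a strict enriched cotensoring at the point-set level, only a bifunctor of the right variance. If needed, we add two sanity checks. The first concerns the point: $\ch*~(\ast)=0$, so the cotensor with a point is the zero algebra, as it should be for a pointed cotensor. The second concerns the unit: since $\ch*~(S^0)\cong\kk$ and the Barratt--Eccles structure on $\kk$ factors through the augmentation $\E\to\Com$, the comodule map $\Delta$ composed with the projection $\Enu\to\kk$ in each arity should give the identity (counitality). Hence $\ch*~(S^0)\otimes\gk\cong\gk$. I expect the counitality check to be the only step requiring an actual look at the formula in Proposition~\ref{prop:comodule structure}. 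Only the $k=0$ term $\rho(\emptyset)=(\id)$ survives the augmentation, because $\kk$ is concentrated in degree $0$ and $\rho(\underline\sigma)$ has degree $k$. That term is $(\id)\otimes s^{-1}x$, as required.
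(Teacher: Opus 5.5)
Your proposal is correct and follows the paper's own argument. The paper likewise just precomposes the bifunctor of Proposition~\ref{prop:tensor bifunctor} with the reduced cochain functor $\ch*~(-)$, equipped with its Berger--Fresse $\Enu$-algebra structure. Your extra details (the kernel argument showing $\ch*~(X)$ is an $\Enu$-algebra, the bifunctoriality, and the counitality check giving $\ch*~(S^0)\otimes\gk\cong\gk$) are sound but go beyond what the paper spells out.
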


\begin{proof}
  This follows directly by precomposing the bifunctor of Proposition~\ref{prop:tensor bifunctor} with the reduced cellular cochain functor $\ch*~(-)$ and its dg $\Enu$-algebra structure as constructed in \cite{BergerFresse04}.
\end{proof}

\subsubsection{A point-set model for the homotopy cotensor}

The following result is an analogue of Knudsen~\cite[Proposition 3.18]{Knudsen17} (see also Hinich~\cite[Lemma 4.8.3]{Hinich1997}) for the cotensoring of spectral Lie algebras by finite type pointed spaces.
Note that such a cotensorization is essentially unique by \cite[Section~4.4.4]{Lurie2009a}.

\begin{theorem}\label{thm: point-set cotensor with spaces}
  The functor
  \[
    \ch*~(-) \otimes -: \mathsf{sSet}^{\mathsf{f.t.},\op} \times \dg~\OB(\sLie \otimes \E)\text{-}\alg \longrightarrow \dg~\OB(\sLie \otimes \E)\text{-}\alg
  \]
  presents the $\infty$-categorical cotensoring of spectral Lie algebras by finite type pointed spaces, i.e., $\gk^X \simeq \ch*~(X) \otimes \gk$ for any finite type pointed simplicial set $X$ and any dg $\OB(\sLie \otimes \E)$-algebra $\gk$.
\end{theorem}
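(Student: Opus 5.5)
We follow the strategy of Knudsen's Proposition 3.18 and Hinich's Lemma 4.8.3: first show that $\ch*~(-) \otimes \gk$ is homotopically well behaved in both variables, then identify it with $\lim_X \gk$ (and its pointed fiber version) by a reduction to the simplices of $X$.

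\emph{Step 1: homotopy invariance.} The functor $\ch*~(X) \otimes \gk$ preserves quasi-isomorphisms in $\gk$, since we work over a field. It sends weak equivalences of finite type pointed simplicial sets to quasi-isomorphisms. It therefore descends to a functor of $\infty$-categories
\[
  \mathcal{S}_*^{\mathsf{f.t.},\op} \times \Alg_{\sL}(\smod_\kk) \longrightarrow \Alg_{\sL}(\smod_\kk),
\]
using Proposition~\ref{prop:rectification of spectral Lie algebras}. On underlying chain complexes this is just $\ch*~(X)\otimes \gk$. That complex is the correct underlying object of $\gk^X$ when $X$ is of finite type, because $(\Sigma^\infty X)^*\wedge \gk$ is modeled by $\ch*~(X)\otimes\gk$ (Section~\ref{sec:infty-categorical-cotensor}).

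\emph{Step 2: a comparison map in the unpointed case.} We treat $\lim_X\gk$ using the unreduced cochains $\ch*(X)\otimes\gk$, which is also an algebra by Proposition~\ref{prop:tensor bifunctor}, since $\ch*(X)$ is an $\Enu$-algebra. Write $X=\operatorname{colim}_{\Delta^n\to X}\Delta^n$. Each $\ch*(\Delta^n)\to\kk$ is a quasi-isomorphism of $\Enu$-algebras, so $\ch*(\Delta^n)\otimes\gk\simeq \gk$ as algebras. Functoriality then gives a natural map of diagrams of algebras indexed by the simplex category of $X$, and hence a map
\[
  \ch*(X)\otimes\gk \longrightarrow \operatorname{holim}_{\Delta^n\to X}\bigl(\ch*(\Delta^n)\otimes\gk\bigr)\simeq \lim_X \gk .
\]

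\emph{Step 3: checking it is an equivalence.} The forgetful functor to chain complexes is conservative and preserves limits, so it suffices to check the claim on underlying complexes. There the map is
\[
  \ch*(X)\otimes\gk\to \operatorname{holim}(\ch*(\Delta^n))\otimes\gk .
\]
This is an equivalence when $X$ is of finite type: $\ch*(X)$ computes $\operatorname{holim}$ of the constant diagram $\kk$, and tensoring with $\gk$ commutes with this limit, because it is degreewise finite in the relevant sense. This is exactly where the finite-type hypothesis enters.

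\emph{Step 4: the pointed case.} Take fibers of $\ch*(X)\otimes\gk\to\ch*(\mathrm{pt})\otimes\gk=\gk$. This map is surjective, hence a fibration, and its kernel is $\ch*~(X)\otimes\gk$. So the kernel computes $\operatorname{fib}(\lim_X\gk\to\gk)=\gk^X$. Finally, the appeal to essential uniqueness of cotensorings (Lurie, Section~4.4.4) packages these pointwise equivalences into a presentation of the cotensoring.

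The main obstacle is Step 2: making the comparison map an honest map of algebras, natural in $X$. Homotopy limits of $\OB(\sLie\otimes\E)$-algebras must be computed via fibrant replacements. We also have to verify that the comodule structure of Proposition~\ref{prop:comodule structure} is compatible with the quasi-isomorphisms $\ch*(\Delta^n)\to\kk$. The first thing to try is to observe that all objects are fibrant, since fibrations are surjections, so the homotopy limit over the simplex category can be computed by a Reedy-type cosimplicial totalization. Then one checks strict naturality directly from the functoriality of Proposition~\ref{prop:tensor bifunctor}.
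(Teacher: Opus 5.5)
Your outline is sound, but it takes a different route from the paper. The paper never builds a comparison map into an $\infty$-categorical limit. Instead it invokes Fresse's framework of simplicial frames and shows that $\Delta^n \mapsto \ch*(\Delta^n)\otimes\gk$ is a Reedy fibrant resolution of $\gk$ in simplicial $\OB(\sLie\otimes\E)$-algebras. This comes down to two checks:
\begin{enumerate}
  \item the maps $\gk \to \ch*(\Delta^n)\otimes\gk$ induced by $\Delta^n\to\Delta^0$ are quasi-isomorphisms;
  \item the matching maps $\ch*(\Delta^n)\otimes\gk \to \ch*(\partial\Delta^n)\otimes\gk$ are surjective, because $\ch*(\Delta^n)\to\ch*(\partial\Delta^n)$ only kills the top cochain.
\end{enumerate}
The general theory of frames then identifies the homotopy cotensor, naturally in both variables. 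The pointed case follows by taking fibers, exactly as in your Step 4. Your route instead uses that the forgetful functor is conservative and limit-preserving to push the comparison down to chain complexes. There the needed input (that $\ch*(X)$ is the homotopy limit of the $\ch*(\Delta^n)$ over the simplices of $X$) is classical. The frame approach gives naturality in $X$ and the cotensor structure for free. Yours avoids any model-categorical statement about limits of algebras, but you must establish naturality in $X$ by hand and then appeal to uniqueness of cotensorings.

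Two details need correcting. First, the natural comparison with the constant diagram $\gk$ goes through $\kk=\ch*(\Delta^0)\to\ch*(\Delta^n)$. A map $\ch*(\Delta^n)\to\kk$ requires choosing a vertex and is not natural over the simplex category, so it cannot be used to identify the homotopy limit with $\lim_X\gk$.

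Second, the fix in your last paragraph is misdirected. Objectwise fibrancy does not make a strict limit over the simplex category a homotopy limit. What is needed is Reedy fibrancy of the diagram, i.e.\ the surjectivity of the matching maps above, which is the one substantive check in the paper's proof. Computing the homotopy limit by a cosimplicial totalization would moreover presuppose a cotensoring of algebras by the $\Delta^n$, which is exactly what is being constructed. Your own Step 3 makes this unnecessary anyway: a strict cone already gives a cone in the $\infty$-category, hence the comparison map. The Reedy-type input is then only needed for the chain-level identification in Step 3.
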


\begin{remark}
  We could remove the finiteness assumption on $X$ by considering the hom-space $\hom(\ch~(X), \gk)$ instead of $\ch*~(X) \otimes \gk$ and endowing it with a convolution dg $\OB(\sLie \otimes \E)$-algebra structure. However, this would require working with $\Enu$-coalgebras, which is more cumbersome. For simplicity, we will work with algebras instead. The aforementioned convolution structure can be constructed using the results in \cite{LeGrignou2020a,LeGrignou2022}.
\end{remark}

\begin{proof}
  Let us use the general framework of simplicial frames from~\cite[II(a), Section 3.2.7]{Fresse2017a}; our proof is similar to the one of~\cite[Theorem~7.1.5]{Fresse2017a}.
  Let us first deal with unpointed spaces.
  Consider the Reedy model structure on the category of simplicial $\OB(\sLie \otimes \E)$-algebras (see e.g., \cite[Section 3.1]{Fresse2017a} and the references therein).
  To apply the framework of simplicial frames, we need to check the following two criteria:
  \begin{itemize}
    \item The projections $\epsilon_n : \Delta^n \to \Delta^0$ induce weak equivalences for all $n \geq 0$:
      \[
        \epsilon^* : \gk = \ch*(\Delta^0) \otimes \gk \longrightarrow \ch*(\Delta^n) \otimes \gk.
      \]
    \item The inclusions $\iota_n : \operatorname{sk}_0 \Delta^n \hookrightarrow \Delta^n$ (where $\operatorname{sk}_0 \Delta^n = \bigsqcup_{k=0}^n \Delta^0$ is the $0$-skeleton of $\Delta^n$) induce Reedy fibrations for all $n \geq 0$:
      \[
        \iota_n^* : \ch*(\Delta^n) \otimes \gk \longrightarrow \prod_{k=0}^n \ch*(\Delta^0) \otimes \gk = \gk^{\oplus (n+1)}.
      \]
  \end{itemize}

  The first point is clear since the tensor product over a field preserves quasi-isomorphisms and $\ch*(\Delta^n)$ is acyclic.
  For the second point, note that fibrations of $\OB(\sLie \otimes \E)$-algebras are fibrations of chain complexes, i.e., degreewise surjections.
  Moreover, the tensor product is right exact, so we can reduce to checking the fibration property on cellular cochains.

  Since $\ch*$ sends colimits to limits, the matching space is $\ch*(\partial \Delta^n)$.
  It follows that all we need to check is that the map $\ch*(\Delta^n) \longrightarrow \ch*(\partial \Delta^n)$ is a degreewise surjection (a property often called ``extendability''~\cite[§10]{FelixHalperinThomas2001}).
  This is clear, since that map is simply the projection that kills the top cochain.
  Hence, the second point is also satisfied.

  Now, we can apply the framework of simplicial frames to deduce that the functor $\Delta^n \mapsto \ch*(\Delta^n) \otimes \gk$ is a Reedy fibrant resolution of $\gk$ in the category of $\OB(\sLie \otimes \E)$-algebras.
  Taking fibers, we deduce the statement of the theorem for pointed spaces.
\end{proof}

\subsection{Point-set version of Knudsen's theorem}

We put together the results of the previous sections to exhibit specific chain complex lifts of Knudsen's theorem (smashed with $H\kk$ to obtain homology).

\begin{theorem}\label{thm:main}
  Let $M$ be a parallelizable $d$-manifold of finite type, and let $V$ be a chain complex over $\kk$. There is a zig-zag of quasi-isomorphisms of weighted chain complexes over $\kk$:
  \[
    \bigoplus_{k \geq 1} \ch(\Conf_k(M)) \otimes_{h\Sym_k} V^{\otimes k} \simeq \B_\iota \bigl( \ch*~(M^+) \otimes \OB(\sLie \otimes \E) \circ (s^d V)\bigr),
  \]
  where, on the left-hand side, the weight is given by $k$, and on the right-hand side, the weight is given by considering $V$ to be in weight $1$.
\end{theorem}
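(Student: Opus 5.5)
The plan is to smash Corollary~\ref{cor:indec-bar} with $H\kk$ and then replace each $\infty$-categorical ingredient on its right-hand side by the point-set model built in the previous subsections.

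\textbf{Step 1: specialize Knudsen's equivalence to $\kk$-modules.} Take $X = H\kk \otimes V$, or more precisely work directly in $\Alg_{\sL}(\smod_\kk)$, using that the spectral Lie operad base-changed to $\kk$ is again $\sL$. Then $\B_k(M;X)$ becomes $\ch(\Conf_k(M)) \otimes_{h\Sym_k} V^{\otimes k}$. Here the homotopy orbits are modeled by any $\Sym_k$-projective resolution, for instance $\E(k) \otimes_{\Sym_k}$ or bar resolutions. The base change commutes with the colimits involved in Knudsen's theorem, so the whole weighted equivalence transfers.

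\textbf{Step 2: model each piece at the point-set level.}
\begin{itemize}
\item By Proposition~\ref{prop:rectification of spectral Lie algebras}, the free spectral Lie algebra $\sL(\Sigma^d X)$ is presented by the free algebra $\OB(\sLie \otimes \E) \circ (s^d V)$. The free functor is left Quillen, and every chain complex over a field is cofibrant, so this free algebra computes the derived free functor. It carries the weight given by word length in $V$.
\item Theorem~\ref{thm: point-set cotensor with spaces} applies with $X = M^+$, which is of finite type since $M$ is. It identifies $\bigl(\sL(\Sigma^d X)\bigr)^{M^+}$ with $\ch*~(M^+) \otimes \OB(\sLie \otimes \E)\circ(s^dV)$. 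We check that the weight is preserved: the $\Enu$-comodule structure of Proposition~\ref{prop:comodule structure} does not mix weights, because $\ch*~(M^+)$ sits in weight $0$.
\item By Lemma~\ref{lemma: derived indecomposables} and Proposition~\ref{prop: computation of derived indecomposable}, the derived indecomposables of this weighted algebra are computed by $\B_\iota$ of it, compatibly with weights.
\end{itemize}
Chaining these gives the zig-zag. The quasi-isomorphisms come from the resolution $\OBi A \to A$ and from the rectification equivalences.

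\textbf{Main obstacle: weights.} The delicate point is that Knudsen's theorem is an equivalence of weighted spectra, while the rectification, the cotensor and the indecomposables models are proven for unweighted objects. I would handle this by working in the category of $\mathbb{N}$-graded chain complexes, equivalently functors from the discrete category $\mathbb{N}$. This category satisfies the same hypotheses as chain complexes: it is a Day-convolution symmetric monoidal model category over a field. Hauseng's rectification and the Le Grignou--Roca i Lucio resolution therefore apply verbatim. One also has to make sure that the cotensor of Theorem~\ref{thm: point-set cotensor with spaces} agrees with the weighted $\infty$-categorical cotensor used by Knudsen. Since both are computed weightwise, as finite limits, this reduces to the unweighted statement.

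A secondary point is that Knudsen's theorem is for spectra, so we need $H\kk \wedge \sL \simeq \sL_\kk$ as $\infty$-operads in $\smod_\kk$. We also need $H\kk$-linearization to commute with the cotensor by the finite-type space $M^+$. This holds because finite-type cotensors are computed by duals that are degreewise finite, so they commute with $H\kk \wedge -$ under the finiteness assumption.
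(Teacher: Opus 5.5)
Your proposal is correct and follows essentially the same route as the paper. The paper's proof is the one-line combination of Knudsen's theorem (through Corollary~\ref{cor:indec-bar}, smashed with $H\kk$) with Proposition~\ref{prop:rectification of spectral Lie algebras}, Theorem~\ref{thm: point-set cotensor with spaces} and Proposition~\ref{prop: computation of derived indecomposable}; your discussion of weights and of base change to $\kk$-modules only spells out what the paper leaves implicit, where weights are handled simply by noting that $\OBi A \to A$ and the other constructions preserve the extra grading.
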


\begin{proof}
  This follows directly from Proposition~\ref{prop:rectification of spectral Lie algebras}, Proposition~\ref{prop: computation of derived indecomposable} and Theorem~\ref{thm: point-set cotensor with spaces}, which lift the constructions that appear in Theorem~\ref{thm:knudsen} to the point-set level.
\end{proof}

\begin{remark}
  The finite type assumption in Theorem~\ref{thm:main} can be removed by working with chains instead of cochains and considering their $\Enu$-coalgebra structure. In practice, however, the finite type assumption is almost always satisfied and it is easier to work with cochains and their $\Enu$-algebra structure instead.
\end{remark}

\section{Smaller models and computational aspects}

In this section, we discuss the computational aspects of the chain complexes exhibited in the previous section. First, we show that they can be reduced by replacing the Barratt--Eccles dg operad with the surjections operad $\Surj$, which is another model for the $\EE_\infty$ operad. Along the way, we show that the bar construction $\B(\PP \otimes \Surj)$ is \emph{quasi-planar} in the sense of \cite{LeGrignouRocaiLucio2023}, for any dg operad $\PP$ (Proposition~\ref{prop:b-surj-qpl}).
We also explain how this complex can be filtered by two distinct filtrations: the filtration induced by the quasi-planar filtration of $\B(\PP \otimes \Surj)$ and the filtration induced by the complexity filtration on $\Surj$ (which corresponds to the filtration given by the different $\EE_n$ suboperads).

\medskip

Finally, we work out some examples of these chain complexes by explicitly computing the algebra structure over the surjections operad of $\ch*(M^{+})$ when $M = \mathbb{R}^d$ and recover many classical results. In general, the key ingredient for carrying these computations further is the computation of the surjection-algebra structure of $\ch*(M^{+})$.

\subsection{A simplification of the complex using the surjection operad}

The Barratt--Eccles operad $\E$ is quasi-isomorphic to the surjection operad $\Surj$~\cite{McClureSmith2002,McClureSmith2003,BergerFresse04}, which is a quotient of $\E$.
Using this smaller model, we can simplify the right-hand side of Theorem~\ref{thm:main}.

\begin{definition}
  The surjection operad $\Surj$ is the dg operad defined as follows.
  As a graded $\Sym$-module, $\Surj(r)_d$ is spanned by maps $u : \underline{r+d} \to \underline{r}$; we mod out by the submodule spanned by degenerate maps, i.e., those that are not surjective or that admit two consecutive elements with the same image.
  We denote by $(u_1, \dots, u_{r+d})$ the surjection sending $i$ to $u_i$, or even $u_1 \dots u_{r+d}$ when there is no ambiguity.

  The symmetric group $\Sym_r$ acts on $\Surj(r)_d$ by post-composition.
  The differential is given by summing over all ways of removing an element in the domain of a surjection, i.e.,
  \begin{equation}
    d(u_1, \dots, u_{r+d}) = \sum_{i=1}^{r+d} \pm (u_1, \dots, \widehat{u_i}, \dots, u_{r+d}),
  \end{equation}
  with signs\footnote{\Textcite{BergerFresse04} and \textcite{McClureSmith2002,McClureSmith2003} have differing sign conventions for the surjection operad. We use the convention from \cite{BergerFresse04} throughout this paper.} determined in~\cite[Section~1.2.3]{BergerFresse04}.
  The operadic composition is given by inserting a surjection into another one and reindexing the elements of the domain accordingly; we refer to~\cite[Section~1.2.4]{BergerFresse04} for more details including signs.
\end{definition}

\begin{theorem}[{\cite[Theorem~1.3.2]{BergerFresse04}, \cite{McClureSmith2002,McClureSmith2003}}]
  There is a quasi-isomorphism of dg operads, called the table reduction morphism:
  \[
    \TR: \E \qi \Surj.
  \]
\end{theorem}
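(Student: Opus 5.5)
We construct $\TR$ explicitly, check that it is a morphism of dg operads, and then show it is a quasi-isomorphism by comparing both sides to the commutative operad.

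\textbf{Definition of the map.} A generator of $\E(r)_n$ is a simplex $(\sigma_0, \dots, \sigma_n)$ of permutations of $\underline r$. Write each $\sigma_i$ as the sequence $(\sigma_i(1), \dots, \sigma_i(r))$ and arrange these as the rows of a table. $\TR(\sigma_0, \dots, \sigma_n)$ is the sum, over all ways of cutting the table into $n+1$ consecutive pieces, of the surjection obtained by reading the pieces in order:
\begin{itemize}
  \item from row $i$ one reads the values of $\sigma_i$ that are not yet exhausted, up to a cutting point;
  \item every row except the last stops at a value that is then repeated in the next row;
  \item the result is taken with the Berger--Fresse sign, and degenerate surjections (two equal consecutive letters) are discarded.
\end{itemize}
A degree count shows the output lies in $\Surj(r)_n$, since exactly $n$ letters are repeated. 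Equivariance is immediate because $\Sym_r$ acts by relabelling the values on both sides.

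\textbf{Chain map and operad morphism.} Compatibility with differentials has two sources of terms:
\begin{itemize}
  \item deleting a row $\sigma_i$ in $\E$;
  \item deleting a letter in $\Surj$, which either merges two adjacent pieces (matching the deletion of an interior row) or yields a non-surjection or degenerate term.
\end{itemize}
I would verify that the remaining terms cancel in pairs, tracking the signs from \cite[Section~1.2.3]{BergerFresse04}. For the operad structure I would check the partial compositions $\circ_k$. In $\E$, $\circ_k$ substitutes rows blockwise, summing over shuffles of simplex coordinates. In $\Surj$, composition sums over ways of distributing the caesuras of the inner surjection among the occurrences of $k$. The table reduction of a blockwise substitution then reorganizes into this sum. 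This combinatorial bijection, together with its signs, is the main obstacle. I would first prove it for arity-two compositions and then argue that the general case follows by associativity of reading tables.

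\textbf{Quasi-isomorphism.} Both operads are arity-wise quasi-isomorphic to $\kk$:
\begin{itemize}
  \item $\E(r)$ is the normalized chains on the contractible simplicial set $E\Sym_r$;
  \item $\Surj(r)$ is acyclic with $H_0=\kk$. I would prove this via the filtration by complexity (the $\chi_n$-suboperads) together with an explicit contracting homotopy that prepends a letter, which works in low filtration degrees and then globally.
\end{itemize}
In degree $0$, $\TR$ sends $(\sigma)$ to the bijection $\sigma$, hence induces the identity on $H_0=\kk[\Sym_r]/\text{(boundaries)}\cong\kk$. It is therefore a quasi-isomorphism in each arity.
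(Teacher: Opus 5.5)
\textbf{Verdict: your outline has a genuine gap at its central step.} The paper does not prove this statement; it quotes it from Berger--Fresse and McClure--Smith, so your outline has to stand on its own.

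Beyond the definition, the whole content of the theorem is that $\TR$ commutes with the differentials and with all partial compositions $\circ_k$, signs included. You only announce these checks, and the shortcut you propose for compositions does not work as stated. Checking arity-two cases and then invoking associativity only reaches the suboperad generated by $\E(2)$, and that suboperad is proper. A composite $x\circ_i y\in\E(3)_1$ with $x,y\in\E(2)$ is a single pair $(\sigma_0,\sigma_1)$. In it, $\sigma_1$ arises from $\sigma_0$ either by reversing the block coming from $y$ or by moving the remaining letter across that block. So $\sigma_0^{-1}\sigma_1$ is an adjacent transposition or a $3$-cycle, and this is unchanged by the relabelling action of $\Sym_3$ you use. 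For $(123,321)$ it is the transposition $(1\,3)$, so this element is never reached, and arbitrary $\circ_k$ must be treated directly.

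Your account of the chain-map check is also inaccurate. For $\TR(12,21)=121$, the nonzero faces $21$ and $12$ come from deleting the caesura and the final letter. They match the outer faces $\partial_0,\partial_1$ of the simplex, not the deletion of an interior row. In general, some terms of $d\,\TR(w)$ match no face of $w$ and must cancel between different cuts. For example, $132$ occurs in the differential of both terms $1312$ and $1232$ of $\TR(123,312)$. Proving these cancellations with the Berger--Fresse signs is the real work, and it is absent.

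There are two smaller issues. First, the complexity filtration cannot be what makes $\Surj(r)$ acyclic. By Theorem~\ref{thm:complexity-filt}, its stages $\Surj_n(r)$ have the homology of $\Conf_r(\R^n)$, which is nonzero in positive degrees. Prepending a fixed letter, $h(u)=(1,u)$, is also not a contraction: when $1$ occurs exactly once in $u$, $dh+hd$ has an extra term that moves this $1$ to the front. What $h$ does give, with suitable signs, is a deformation retraction of $\Surj(r)$ onto the span of the surjections $(1,v)$ in which $1$ occurs only in front. That subcomplex is a copy of $\Surj(r-1)$, so induction on $r$ gives $H_*(\Surj(r))=\kk$ in degree $0$.

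Second, in your definition the caesura ending a row stays available in all later rows and need not reappear in the next one. The only term of $\TR(123,132,213)=12323$ (Remark~\ref{rmk:torus-no-factorization}) is read as $12\,|\,3\,|\,23$, with the caesura $2$ returning only in the third row. Read literally, your rule discards this term.

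Once $\TR$ is known to be a morphism of dg operads, your degree-zero comparison through the augmentations is correct and yields the quasi-isomorphism.
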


Obviously, the table reduction map restricts to a quasi-isomorphism $\TR: \Enu \qi \Surjnu$ between their non-unital versions. Let $\iota' : \B(\sLie \otimes \Surj) \longrightarrow \OB(\sLie \otimes \Surj)$ be the universal twisting morphism.
In order to make sense of the next theorem, we first need to explain how $\ch*~(M^+) \otimes \OB(\sLie \otimes \Surj) \circ (s^d V)$ is an algebra over $\Surjnu \otimes \OB(\sLie \otimes \Surj)$. For this, we prove (Proposition~\ref{prop:b-surj-qpl}) that $\B(\sLie \otimes \Surj)$ is a quasi-planar cooperad, and thus that its cobar construction $\OB(\sLie \otimes \Surj)$ admits a comodule structure over the Hopf operad $\Enu$ by Proposition~\ref{prop:comodule structure}.
Finally, we will use the results of \cite{BergerFresse04,McClureSmith2002,McClureSmith2003} which state that the $\Enu$-algebra structure on $\ch*~(M^+)$ factors through $\Surjnu$.

\begin{theorem}\label{thm:main-surj}
  There is a quasi-isomorphism of weighted chain complexes over $\kk$:
  \[
    \B_\iota \bigl( \ch*~(M^+) \otimes \OB(\sLie \otimes \E) \circ (s^d V)\bigr) \simeq \B_{\iota'} \bigl( \ch*~(M^+) \otimes \OB(\sLie \otimes \Surj) \circ (s^d V)\bigr),
  \]
  where, on both sides, the weight is given by considering $V$ to be in weight $1$.
\end{theorem}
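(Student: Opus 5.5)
The plan is to connect the two sides through the operad morphism induced by table reduction and to show that every piece of the construction is compatible with it. The Hadamard product with $\sLie$ preserves quasi-isomorphisms, and $\B$, $\Omega$ are functorial, so $\TR$ induces a morphism of dg operads
\[
  f \coloneqq \OB(\sLie \otimes \TR) : \OB(\sLie \otimes \E) \longrightarrow \OB(\sLie \otimes \Surj).
\]
This morphism is a quasi-isomorphism. Both sides are cofibrant (indeed $\Sym$-free) resolutions of $\sLie$, and the bar-cobar resolution preserves quasi-isomorphisms between augmented operads with zero in arity $0$ and $1$ up to the unit, by a standard filtration by arity argument. It also preserves the augmentations, and it is compatible with the weight grading.

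The first step is to check that $f$ is compatible with the algebra structures on the two inputs. Write $A \coloneqq \ch*~(M^+) \otimes \OB(\sLie \otimes \E) \circ (s^d V)$ and $A' \coloneqq \ch*~(M^+) \otimes \OB(\sLie \otimes \Surj) \circ (s^d V)$. The map $\mathrm{id} \otimes f \circ \mathrm{id}$ should be a morphism $A \to f^* A'$ of $\OB(\sLie \otimes \E)$-algebras. This amounts to showing that $f$ intertwines the two $\Enu$-comodule structures of Proposition~\ref{prop:comodule structure}, namely
\[
  (\mathrm{id}_{\Enu} \otimes f)\circ \Delta_{\E} = \Delta_{\Surj} \circ f.
\]
It also uses that the $\Enu$-algebra structure on $\ch*~(M^+)$ factors through $\TR$. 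Here I expect the \textbf{main obstacle}. The comodule structures are built in \cite[Theorem 2]{LeGrignouRocaiLucio2023} from the quasi-planar ladders, so I would first verify that $\B(\sLie \otimes \TR)$ sends the planar generators $\mathbb{T}^c(\sLie \otimes \E_\pl)$ into the planar generators for the ladder of Proposition~\ref{prop:b-surj-qpl}. I expect this to hold because $\TR$ sends sequences starting with $\id$ to surjections whose first occurrences are ordered increasingly. It should also respect the filtrations. Once this is known, I would use the naturality of that construction with respect to morphisms of quasi-planar ladders. If strict compatibility fails, the fallback is the following. Both comodule structures are determined up to contractible choice by the universal property of the cofibrant operad $\OB(\sLie\otimes\E)$ mapping to $\Enu \otimes \OB(\sLie \otimes \Surj)$, whose target is a resolution of $\sLie$. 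This gives a homotopy between the two composites, and for the purpose of a quasi-isomorphism of bar constructions a homotopy suffices, since homotopic operad maps induce weakly equivalent pullback functors.

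Next I would check that $\mathrm{id}\otimes f\circ\mathrm{id}: A \to f^*A'$ is a quasi-isomorphism. It is a quasi-isomorphism of free algebras on $s^dV$ over a field, and tensoring with $\ch*~(M^+)$ preserves it. I would then conclude with Proposition~\ref{prop: computation of derived indecomposable} and its analogue for $\iota'$, which rests on Proposition~\ref{prop:b-surj-qpl} and \cite[Theorem 7]{LeGrignouRocaiLucio2023}. By these results $\B_\iota A$ computes $\mathbb{L}\mathrm{indec}(A)$ and $\B_{\iota'}A'$ computes $\mathbb{L}\mathrm{indec}(A')$. The quasi-isomorphism $f$ of $\Sym$-cofibrant operads induces a Quillen equivalence, as in \cite{Hauseng19}, commuting with $\mathrm{triv}$ and hence with $\mathbb{L}\mathrm{indec}$. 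Therefore
\[
  \B_\iota A \simeq \mathbb{L}\mathrm{indec}(A) \simeq \mathbb{L}\mathrm{indec}(f^* A') \simeq \mathbb{L}\mathrm{indec}(A') \simeq \B_{\iota'} A',
\]
and every map in this chain preserves the weight.

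Alternatively, there is a direct zig-zag. The morphism $\B(\sLie\otimes\TR)$ of cooperads induces a map $\B_\iota A \to \B_{\iota'}A'$, and one can check that it is a quasi-isomorphism by filtering by weight and bar-length, where it reduces to the quasi-isomorphism $\TR$.
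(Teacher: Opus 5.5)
Your primary route breaks exactly at the step you flagged, and no more careful check can rescue it: the identity $(\mathrm{id}\otimes f)\circ\Delta_{\E}=\Delta_{\Surj}\circ f$ does not hold in general. Your expectation that $\TR$ sends planar generators to planar ones already fails in $\E_{\pl}(3)_1$. The span of its image under $\TR$ contains the non-planar surjections $1312$ and $1321$. In fact, a rank count in $\Surj(3)_1$ shows that no planar basis of $\Surj$ is compatible with $\TR$. So $\B(\sLie\otimes\TR)$ is not a morphism of quasi-planar ladders, and there is no naturality of the construction of Proposition~\ref{prop:comodule structure} to invoke. The paper also exhibits an explicit non-commuting element, with $\sLie$ replaced by $\Com$: $[123|132|321]$ has zero table reduction, while table-reducing its coaction gives $1232\otimes s(1321)\neq 0$. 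For the same reason your ``direct zig-zag'' does not exist. The map $\B_\iota A\to\B_{\iota'}A'$ induced by $\TR$ is linear but not a chain map, so there is nothing to filter.

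Everything therefore rests on your fallback, and its key claim is false as stated. Operad maps from a cofibrant resolution of $\sLie$ into another resolution of $\sLie$ are not unique up to contractible choice: maps rescaling the bracket by different scalars are already non-homotopic. What is contractible is the space of lifts of a \emph{fixed} map through an acyclic fibration. So you must show that the two composites cover the same map, and the missing input is counitality. Only the $k=0$ term of the formula in Proposition~\ref{prop:comodule structure} survives the counit $\epsilon\colon\Enu\to\Com$. Hence post-composing either $(\mathrm{id}\otimes f)\circ\Delta_{\E}$ or $\Delta_{\Surj}\circ f$ with $\epsilon\otimes\mathrm{id}\colon\Enu\otimes\OB(\sLie\otimes\Surj)\to\OB(\sLie\otimes\Surj)$ yields $f$. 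Since $\epsilon\otimes\mathrm{id}$ is an arity-wise surjective quasi-isomorphism, lifting against it along a cylinder $Q\sqcup Q\rightarrowtail\mathrm{Cyl}(Q)$ of $Q=\OB(\sLie\otimes\E)$ gives a homotopy between the two composites.

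With this added, the rest of your argument produces the zig-zag. That rest consists of homotopy invariance of pullbacks for $\Sym$-cofibrant operads, invariance of $\mathbb{L}\mathrm{indec}$ under $f$, and Proposition~\ref{prop: computation of derived indecomposable} for $\iota$ and $\iota'$. It does so directly, for any $\Enu$-algebra in place of the cochains of $M^+$.

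The paper never compares the two coactions. It reruns the proof of Theorem~\ref{thm:main} with $\Surj$ in place of $\E$: rectification for the $\Sym$-free operad $\OB(\sLie\otimes\Surj)$, Proposition~\ref{prop:b-surj-qpl} for both the coaction and the identification of $\B_{\iota'}$ with derived indecomposables, and the simplicial-frame argument for the cotensor. Both complexes are thus identified with Knudsen's $\mathbb{L}\mathrm{indec}(\sL(\Sigma^d X)^{M^+})$, and the essential uniqueness of the cotensoring plays the role of your homotopy.
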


The following proposition mirrors~\cite[Proposition~10]{LeGrignouRocaiLucio2023} and is proved in a similar manner.

\begin{proposition}\label{prop:b-surj-qpl}
  Let $\PP$ be a dg operad with $\PP(0) = 0$.
  The conilpotent dg cooperad $\B(\PP \otimes \Surj)$ is quasi-planar.
\end{proposition}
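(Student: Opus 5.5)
We mirror the proof that $\B(\PP \otimes \E)$ is quasi-planar. The key input there is a planar basis of $\E$ that forms a graded suboperad. For $\Surj$ we use the analogous planar basis: surjections $u : \underline{r+d} \to \underline{r}$ whose \emph{first occurrences} of $1, \dots, r$ appear in increasing order. In other words, the subsequence obtained by keeping only the first occurrence of each value is $1, 2, \dots, r$.

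\emph{Step 1 (the planar basis).} Every non-degenerate surjection is obtained uniquely from one of these normalized surjections by post-composition with a unique permutation $\sigma \in \Sym_r$. Hence there is an isomorphism of graded $\Sym$-modules
\[
  \Surj \cong \Surj_\pl \otimes \Sym.
\]
We then check that $\Surj_\pl$ is closed under the operadic composition, up to sign. Inserting a normalized surjection $v$ into the occurrences of $i$ in a normalized $u$ distributes the values of $v$ along those occurrences and reindexes. Under this operation the first occurrence of each value of $v$ lands in the block replacing the first occurrence of $i$ in $u$, or later. Moreover, the first occurrences of the values of $v$ keep their relative order, because the insertion preserves the order in which values first appear. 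So order of first appearance is preserved and $\Surj_\pl$ is a graded planar suboperad. We also need the non-unital version, which is automatic here since $\PP(0)=0$.

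\emph{Step 2 (the ladder).} As graded objects, the bar construction decomposes as
\[
  \B(\PP \otimes \Surj) \cong \mathbb{T}^c\bigl(s(\PP \otimes \Surj_\pl)\bigr) \otimes \Sym,
\]
where $\mathbb{T}^c(s(\PP \otimes \Surj_\pl))$ is a planar conilpotent graded cooperad. This uses that the Hadamard product of $\PP$ with an $\Sym$-free planar operad is again $\Sym$-free on $\PP \otimes \Surj_\pl$, as in the $\E$ case. We filter by the total $\Surj$-degree, that is, the sum of the degrees $d$ of the surjection labels over all vertices of a tree. We also shift by the number of vertices if needed, exactly as the paper does for $\E$ with ``degrees minus one''. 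The cofree comultiplication only splits trees, so it preserves this total degree, and the filtration pieces are subcooperads. This gives the injectivity and factorization conditions of a cooperad ladder.

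\emph{Step 3 (the differential).} The differential has three parts: the internal differential of $\PP$, the differential of $\Surj$, and the bar differential, which contracts an internal edge through the composition of $\PP \otimes \Surj$. The internal differential of $\PP$ preserves planar generators. The bar part decreases the number of vertices, and since composition in $\Surj$ is additive in degree, it keeps the total surjection degree. Because the planar part is a suboperad, this term stays in the planar generators $\CC_\pl \otimes 1$, which is allowed by condition (2). The $\Surj$-differential strictly lowers the total degree, so it lands in the previous stage $\CC^{(i)}$; this holds even though removing a letter can destroy normalization and produce a non-identity permutation.

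The main obstacle is Step 1, the sign-correct check that normalized surjections are closed under composition. I would verify this with the explicit description of composition in \cite[Section~1.2.4]{BergerFresse04}. The first thing to try is a ``first occurrence'' argument: the first occurrence of each value of $v$ in the composite sits inside the segment replacing the first occurrence of $i$ in $u$. A secondary subtlety is to index the ladder so that the bar-differential term does not drop to a lower stage; if it does, that is harmless, since condition (2) only requires landing in $\CC_\pl^{(i+1)}\otimes 1 + \CC^{(i)}$.
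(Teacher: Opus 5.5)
Your Step~1 is false, and Step~3 relies on it. The planar surjections $\Surj_{\pl}$ are \emph{not} closed under composition. For example, $121 \circ_1 12 = 1312 + 1232$ up to sign, and $1312$ is not planar.

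Your first-occurrence argument only tracks the relative order of the values coming from $v$. It ignores that a value of $v$ whose first occurrence falls in a later block (one replacing a later occurrence of $i$) now comes after the first occurrences of the shifted values $>i$ of $u$ that sit between the blocks. What is true is the weaker statement that $u \circ_r v$ is planar when $r$ is the \emph{last} input. In that case every other value of $u$ is smaller than $r$ and first occurs before the first block. So your claim in Step~3, that the edge-contraction part of the bar differential stays in $\CC_{\pl} \otimes 1$, is unjustified and false in general. The ``main obstacle'' you identify cannot be overcome.

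There is a second gap, in Step~2. If the filtration is the total $\Surj$-degree alone, the reduced decomposition \emph{preserves} it rather than lowering it, so it does not factor through the previous stage. Trees whose labels all have degree $0$ already show this. The vertex count is therefore not optional: weight each vertex by its $\Surj$-degree plus one. This is what the paper's $F_n\Surj_{\pl}$ encodes, using that $s\Surj$ is concentrated in positive degrees. With this weight, every piece of a nontrivial decomposition has strictly smaller weight.

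Once you make that choice, Step~1 is no longer needed. Composition in $\Surj$ is additive in degree, so every edge contraction lowers the weight by exactly one, whether or not the composite label is planar. The contraction term therefore lands in $\CC^{(i)}$. The $\Surj$-differential also lowers the weight. The only part of the differential that must preserve planar generators is $d_\PP$, and it does.

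The paper takes a slightly different route. It proves the last-input lemma and adds to the filtration the number of ``bad'' edges, meaning internal edges that are not the last child of their parent. Contractions along bad edges, the only ones that can produce non-planar labels, then visibly lower the filtration. Contractions along last-child edges stay planar by the lemma.
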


\begin{remark}
  The case of a general dg operad $\PP$ involves curved cooperads and curved bar constructions.
  The quasi-planar structure in this more general setting is similar to the one presented here, but requires careful handling of the curvature terms.
\end{remark}

To define the quasi-planar structure, let us adapt the methods of~\cite[Section~2.9]{LeGrignouRocaiLucio2023}.

\begin{definition}
  Let $r,d \geq 0$ be integers.
  A non-degenerate map $u : \underline{r+d} \to \underline{r}$ in $\Surj(r)_d$ is said to be \emph{planar} if the first occurrences of $1, \dots, r$ in the sequence $(u_1, \dots, u_{r+d})$ appear in increasing order.
  We denote by $\Surj_{\pl}(r)_d$ the submodule of $\Surj(r)_d$ spanned by planar surjections.
\end{definition}

\begin{example}
  The element $12132 \in \Surj(3)_2$ is planar, whereas $21312 \in \Surj(3)_2$ is not planar.
\end{example}

Clearly, we have $\Surj = \Surj_{\pl} \otimes \Sym$, i.e., planar surjections form a set of planar generators of the surjection operad.
Unlike what happens for the Barratt--Eccles operad, the submodule $\Surj_{\pl}$ is \emph{not} a sub-operad of $\Surj$.
However, we have that:
\begin{lemma}\label{lem:surj-planar-compos}
  Let $u \in \Surj_{\pl}(r)$ and $v \in \Surj_{\pl}(s)$.
  Then $u \circ_r v$ is planar.
\end{lemma}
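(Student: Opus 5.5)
The proof is a direct check on the explicit formula for partial composition in $\Surj$ from~\cite[Section~1.2.4]{BergerFresse04}. Signs play no role, since planarity is a property of each individual surjection. Moreover, the degenerate terms vanish in $\Surj$, so it suffices to show that every non-degenerate surjection appearing in the expansion of $u \circ_r v$ is planar.

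First I recall the composition rule. Let $u = (u_1, \dots, u_{r+d})$ and let $v = (v_1, \dots, v_{s+e})$. Suppose the value $r$ occurs $k$ times in $u$, at positions $p_1 < \dots < p_k$. Then $u \circ_r v$ is a signed sum over all cuttings of the sequence $v$ into $k$ consecutive intervals $v^{(1)}, \dots, v^{(k)}$. Consecutive intervals overlap in exactly one endpoint, so that concatenating them while dropping each repeated endpoint recovers $v$. In each term:
\begin{itemize}
\item the occurrence of $r$ at position $p_j$ is replaced by the block $v^{(j)}$, with every value $t$ of $v$ relabelled as $r - 1 + t$;
\item values of $u$ smaller than $r$ are left unchanged (there are no larger values, since we compose at the last input $r$).
\end{itemize}
This is exactly why composing at the last input is harmless: no old label is shifted, so the relative order of old labels is untouched.

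Now fix a term $w$ of this sum and compare first occurrences. The labels of $w$ split into two groups.
\begin{itemize}
\item \emph{Old labels} $1, \dots, r-1$ occur only at positions inherited from $u$, in the same relative order. Since $u$ is planar, their first occurrences appear in increasing order.
\item \emph{New labels} $r, \dots, r+s-1$ occur only inside the inserted blocks.
\end{itemize}

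I then check the two orderings needed for planarity of $w$. By planarity of $u$, the first occurrence of $r$ in $u$, at position $p_1$, comes after the first occurrences of all of $1, \dots, r-1$. Since every new label first appears at or after the start of the block $v^{(1)}$ placed at $p_1$, all first occurrences of new labels come after those of the old labels.

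Among the new labels, consider reading the blocks $v^{(1)}, v^{(2)}, \dots$ in order, which is their order in $w$. This reproduces the sequence $v$, shifted by $r-1$, up to repeating the overlapping endpoints. Repeating an entry that has already appeared cannot change the order of first appearances. Hence the first occurrences of $r, \dots, r+s-1$ appear in the same order as the first occurrences of $1, \dots, s$ in $v$, which is increasing because $v$ is planar. Together with the previous point, all first occurrences in $w$ appear in increasing order, so $w$ is planar, and therefore $u \circ_r v \in \Surj_{\pl}$.

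The only delicate step is stating the composition rule precisely enough, in particular the overlap of consecutive blocks at their endpoints, to justify the claim that reading the blocks in order reproduces $v$ up to repeated entries. The remainder of the argument is bookkeeping.
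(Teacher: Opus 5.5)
Your proof is correct. The paper states this lemma without a written proof; it only gives the example contrasting $121 \circ_1 12$ with $121 \circ_2 12$. Your direct check on the Berger--Fresse composition formula is exactly the verification the paper leaves implicit. The key points are that composing at the last input shifts no old label, that every new label first appears in the block replacing the first occurrence of $r$ (which by planarity of $u$ comes after all other first occurrences), and that reading the blocks in order reproduces $v$ up to repeated overlap entries.
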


\begin{example}
  Consider the two elements $121 \in \Surj_{\pl}(2)_1$ and $12 \in \Surj_{\pl}(2)_0$.
  Then $121 \circ_1 12 = 1312 + 1232$, and $1312 \in \Surj(3)_1$ is not planar.
  On the other hand, $121 \circ_2 12 = 1231$ is planar.
\end{example}

\begin{remark}
  Recall that $\E_{\pl}(r)_d$ is spanned by non-degenerate sequences $\underline{\sigma} = (\sigma_0, \dots, \sigma_d)$ such that $\sigma_0 = \id_r$.
  In general, it is \emph{not} true that $\TR(\E_{\pl}(r)_d)$ is equal to $\Surj_{\pl}(r)_d$.
  In fact, it is not possible to choose a planar basis of $\Surj$ compatible with the table reduction morphism.
  E.g., the rank of $\Surj(3)_1$ is $18$, so a planar basis should have cardinality $18/3! = 3$; but $\TR(\E_{\pl}(3)_1)$ has rank $5$ (spanned by $1232$, $1213$, $1231$, $1312$, $1321$).

  On the other hand, we do have $\Surj_{\pl}(r)_d \subseteq \TR(\E_{\pl}(r)_d)$.
  Indeed, given $u \in \Surj(r)_d$, the algorithm described in~\cite[Section~1.4.2]{BergerFresse04} to construct a preimage of $u$ under $\TR$ always produces an element of $\E_{\pl}(r)_d$ when $u$ is planar.
\end{remark}

Lemma~\ref{lem:surj-planar-compos} motivates the following definition.
Consider the degree filtration on $\Surj_{\pl}$:
\begin{equation}
  F_n \Surj_{\pl}(r)_d =
  \begin{cases}
    \Surj_{\pl}(r)_d, & \text{if } d \leq n-1; \\
    0,                & \text{otherwise}.
  \end{cases}
\end{equation}
This induces a filtration on $\Surj_{\pl} \otimes \PP$ for any operad $\PP$.

Since $\Surj$ is arity-wise free as a module over the symmetric group, we obtain $\B(\Surj \otimes \PP) = \B_{\pl}(\Surj_\pl \otimes \PP) \otimes \Sym$ as a graded symmetric sequence, where $\B_{\pl}(\Surj_\pl \otimes \PP)$ is spanned by planar trees decorated by elements of $\Surj_{\pl} \otimes \PP$.
We call an edge of a planar tree \emph{bad} if it is internal (i.e., joins two vertices) and it is not the last edge among the children of its parent.
We thus filter $\B_{\pl}(\Surj_{\pl} \otimes \PP)$ by the number of bad edges plus the degree filtration on $\Surj$, which induces a filtration on $\B(\Surj \otimes \PP)$.

\begin{example}
  Let $u \in \Surj(2)_{i-1}$ and $v \in \Surj(r)_{j-1}$.
  Then the element $u(v, \id) \in (\B_{\pl}\Surj)(r+1)_{i+j}$ is in filtration level $i+j+1$, whereas $u(\id, v)$ is in filtration level $i+j$.
\end{example}

\begin{proof}[Proof of Proposition~\ref{prop:b-surj-qpl}]
  Let us check the hypotheses of~\cite[Definition~35]{LeGrignouRocaiLucio2023}.
  The filtration $\{F_n \B(\Surj \otimes \PP)\}$ exhausts $\B(\Surj \otimes \PP)$.
  Moreover, the reduced cooperad coproduct $\bar\Delta : F_n\B(\Surj \otimes \PP) \to \mathcal{T}^c(\B(\Surj \otimes \PP))$ factors through $\mathcal{T}^c(F_{n-1}\B(\Surj \otimes \PP))$ as $s\Surj$ is concentrated in positive degrees and the bar comultiplication does not create edges, and thus does not create bad edges.
  Finally, the non-planar component of the differential either comes from the internal differential of $\Surj$, which lowers the internal degree and thus the filtration; or from the contraction of a bad edge (Lemma~\ref{lem:surj-planar-compos}), which lowers the filtration as well.
\end{proof}

\begin{proof}[Proof of Theorem~\ref{thm:main-surj}]
  We can now mimic the proof of Theorem~\ref{thm:main}, replacing $\E$ by $\Surj$ and using Proposition~\ref{prop:b-surj-qpl} instead of Proposition~\ref{prop:rectification of spectral Lie algebras}.
\end{proof}

\begin{remark}
  There is a linear map from the LHS to the RHS of Theorem~\ref{thm:main-surj} induced by the quasi-isomorphism of dg operads $\TR : \E \to \Surj$.
  However, this map is \emph{not} a chain map in general.
  The main issue is that the $\E$-comodule structures of $\B(\PP \otimes \E)$ and $\B(\PP \otimes \Surj)$ are not compatible, that is, the following diagram does \emph{not} commute:
  \[
    \begin{tikzcd}
      \B(\PP \otimes \E) \arrow{rr}{\Delta_\E} \arrow{d}{\TR} \arrow[drr, phantom, "\text{\scriptsize {\fontencoding{U}\fontfamily{futs}\selectfont\char 49\relax} not commutative!}"] && \E \otimes \B(\PP \otimes \E) \arrow{d}{\TR \otimes \TR} \\
      \B(\PP \otimes \Surj) \arrow{r}{\Delta_\Surj}  & \E \otimes \B(\PP \otimes \Surj) \arrow{r}{\TR}             & \Surj \otimes \B(\PP \otimes \Surj).
    \end{tikzcd}
  \]
  For a concrete example, consider $\PP = \Com$ so that $\PP \otimes \E = \E$ and $\PP \otimes \Surj = \Surj$.
  Consider the planar element $[123|132|321] \in \E(3)_2$ viewed as a generator of $\B\E$.
  Its table reduction is zero, but applying the comodule structure first and then the table reduction gives $1232 \otimes s(1321) \neq 0$.
  Nonetheless, our arguments in the proof of Theorem~\ref{thm:main-surj} show that the two weighted chain complexes are quasi-isomorphic (albeit not directly).
\end{remark}

\subsection{An explicit chain complex and its filtrations}
Let $M$ be a parallelizable $d$-manifold of finite type, and let $V$ be a chain complex over $\kk$. As a direct consequence of Theorems~\ref{thm:main} and~\ref{thm:main-surj}, the homology of $\ch(\Conf_k(M)) \otimes_{h \Sym_k} V^{\otimes k}$ is computed by the following complex
\[
  \B_\iota^{(k)} \bigl( \ch*~(M^+) \otimes \OB(\sLie \otimes \Surj) \circ (s^d V)\bigr) =
\]
\begin{equation}\label{eq: the chain complex for configurations of k points}
  \bigoplus_{n \geq 1} \B(\sLie \otimes \Surj)(n) \otimes_{\Sym_n} \left[ \ch*~(M^+) \otimes \left( \OB(\sLie \otimes \Surj)(k) \otimes_{\Sym_k} (s^d V)^{\otimes k} \right) \right]^{\otimes n}
\end{equation}
for any $k \geq 1$. Let us analyze the different terms in the differential of this complex.

\begin{enumerate}
  \item For every $n \geq 1$, the complex $\B(\sLie \otimes \Surj)(n)$ has an internal differential, corresponding to the bar construction of the dg-operad $\sLie \otimes \Surj$. This complex on its own is a projective resolution of the trivial representation of $\Sym_n$ since $\B(\sLie \otimes \Surj) \qi \Com$.
  \item The complex $\OB(\sLie \otimes \Surj)(k) \otimes_{\Sym_k} (s^d V)^{\otimes k}$ has an internal differential; its homology is given by the arity $k$ component of the homology of the free spectral Lie algebra generated by $V$, see~\cite{antolin-camarena_mod_2020,kjaer_odd_2018}.
  \item The complex $\ch*~(M^+)$ has an internal differential, which computes the homology of $M^+$ with $\kk$ coefficients.
  \item The most important term of the differential is the term that relates these two previous blocks: it takes a leveled tree in $\B(\sLie \otimes \Surj)(n)$, cuts the top level, deshifts it, and then makes it act on
    \[
      \left[ \ch*~(M^+) \otimes \left( \OB(\sLie \otimes \Surj)(k) \otimes_{\Sym_k} (s^d V)^{\otimes k} \right) \right]^{\otimes n};
    \]
    this action is fully determined by the $\Surjnu$-algebra structure of $\ch*~(M^+)$ together with the comodule map of Proposition~\ref{prop:comodule structure}.
\end{enumerate}

Through different filtrations on the total complex, our goal is to analyze this fourth term of the differential that determines the homology of $\ch(\Conf_k(M)) \otimes_{h \Sym_k} V^{\otimes k}$.

\begin{remark}
  In particular, it is apparent from the above description that the homology of these configuration spaces only depends on the $\EE_\infty$-algebra structure of $\ch*~(M^+)$. This is a particular case of~\cite[Corollary 5.16]{Petersen20} when the base ring is a field and the manifold is parallelizable.
\end{remark}

Let us illustrate the previous description of the complex and the differentials.
A typical element of $\B_\iota^{(k)} \bigl( \ch*~(M^+) \otimes \OB(\sLie \otimes \Surj) \circ (s^d V)\bigr)$ is a linear combination of trees as in Figure~\ref{fig:example-tree-complex}.
Each part of the differential described above corresponds to a different way of modifying the tree:
\begin{enumerate}
  \item The internal differential of $\B(\sLie \otimes \Surj)(n)$ corresponds to modifying the red tree at the bottom of the picture, either by applying the differential of $\sLie \otimes \Surj$ to the labels of the vertices or by contracting an edge.
  \item The internal differential of $\OB(\sLie \otimes \Surj)(k) \otimes_{\Sym_k} (s^d V)^{\otimes k}$ corresponds to modifying the black subtrees, either by applying the differential of $\sLie \otimes \Surj$ to the labels of the vertices, by contracting a solid edge (bar construction), or by converting a solid edge into a dashed edge (cobar construction).
  \item The internal differential of $\ch*~(M^+)$ corresponds to modifying the blue boxes, by applying the differential of $\ch*~(M^+)$ to the labels of the vertices.
  \item The term of the differential that relates the red tree to the black trees corresponds to cutting the red tree at the top level, deshifting it, and making it act on the tensor product of the blue box and the black trees according to the $\Surjnu$-algebra structure of $\ch*~(M^+)$ and the comodule structure of $\B(\sLie \otimes \Surj)$.
\end{enumerate}

\begin{figure}[htbp]
  \centering
  \begin{forest} uconf tree
    [{$\left[x_{1}, x_{2}\right]$}, rv[{1}, lf, re][{2}, lf, re]]
  \end{forest}
  \caption{An example of an element in the complex $\B_\iota^{(k)} \bigl( \ch*~(M^+) \otimes \OB(\sLie \otimes \Surj) \circ (s^d V)\bigr)$. The bottom red tree is an element of $\B(\sLie \otimes \Surj)$, the blue boxes are elements of $\ch*~(M^+)$, and the black trees are elements of $\OB(\sLie \otimes \Surj)(k) \otimes_{\Sym_k} (s^d V)^{\otimes k}$. Black solid edges are within $\B(\sLie \otimes \Surj)$, whereas dashed edges correspond to the composition in the cobar construction.}
  \label{fig:example-tree-complex}
\end{figure}

\subsubsection{The complexity filtration}\label{subsection: the complexity filtration}

We now recall the complexity filtration of the surjection operad $\Surj$ introduced in~\cite[Section~3]{McClureSmith2003}.

\begin{definition}
  Let $u$ be a finite sequence of integers.
  If $u$ only takes values $i$ and $j$ for some $1 \leq i < j \leq r$, we define its \emph{complexity} as the number of times $u$ switches from $i$ to $j$ or from $j$ to $i$.
  In general, the \emph{complexity} of $u$ is defined as the maximum of the complexities of the sequences of the form $u|_{i,j}$, where $u|_{i,j}$ is obtained from $u$ by removing all values except $i$ and $j$.

  Let $k \geq 1$ be an integer. We let $\Surj_k$ be the sub-$\Sym$-module of $\Surj$ spanned by surjections of complexity at most $k$.
\end{definition}

\begin{theorem}[{\cite[Theorem~3.5]{McClureSmith2003}}]\label{thm:complexity-filt}
  The collection $\{\Surj_k\}_{k \geq 1}$ forms a filtration of dg-operads of $\Surj$:
  \[
    \Surj_1 \subseteq \Surj_2 \subseteq \dots \subseteq \Surj_k \subseteq \Surj_{k+1} \subseteq \dots \subseteq \Surj,
  \]
  and there are quasi-isomorphisms of dg-operads $\Surj_k \qi \EE_k$ for all $k \geq 1$.
\end{theorem}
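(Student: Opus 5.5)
The statement is cited from \cite[Theorem~3.5]{McClureSmith2003}, so a proof would reproduce the McClure--Smith argument. There are two parts: the filtration is by dg suboperads, and each stage $\Surj_k$ is a model for $\EE_k$.

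\emph{Suboperads.} Complexity is defined through the restrictions $u|_{i,j}$, so it suffices to study how operations behave on two-letter sequences.

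For the differential, deleting an entry $u_i$ can only merge or shorten blocks in each $u|_{i,j}$. Hence it never increases the number of switches, and it also cannot create degeneracies that matter. So $d(\Surj_k)\subseteq\Surj_k$.

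The symmetric group action only relabels values, so it preserves complexity.

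For a composition $u\circ_p v$, the sequence of $v$ is distributed among the occurrences of $p$ in $u$, and each term is a surjection $w$. Take two values $a,b$ of $w$ and consider three cases.
\begin{itemize}
\item If both come from $u$ (neither equals $p$), then $w|_{a,b}=u|_{a,b}$.
\item If both come from $v$, then $w|_{a,b}$ is obtained from $v|_{a,b}$ by concatenating its cut-up pieces in order. Consecutive pieces overlap at their boundary letter, so the number of switches is unchanged.
\item If $a$ comes from $u$ and $b$ from $v$, then $w|_{a,b}$ is $u|_{a,p}$ with each occurrence of $p$ replaced by a nonempty-or-empty block of $b$'s. Replacing $p$ by $b$ or by nothing can only lower the switch count. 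The pieces corresponding to empty blocks are removed as degenerate, or lower the complexity.
\end{itemize}
The identity $12\cdots$ in arity one has complexity $0$ or $1$, so each $\Surj_k$ is closed under composition.

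\emph{Identification with $\EE_k$.} This is the hard step. I would follow McClure--Smith or Berger--Fresse: compare with the complexity filtration of the Barratt--Eccles operad, $\E_k\subseteq\E$. Berger showed that the nerve of the complexity-filtered permutation poset $\mathcal K_k$ gives an $\EE_k$ model: each $\E_k(r)$ is a cellular chain complex whose cells are indexed by a cell decomposition of a space equivalent to $\Conf_r(\R^k)$ (the Fox--Neuwirth/Milgram cells).

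Then show that table reduction $\TR$ maps $\E_k$ into $\Surj_k$. It does, because the complexity of a table reduction term is bounded by the number of switches in the rows of the table.

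Finally, show that $\TR\colon\E_k\to\Surj_k$ is a quasi-isomorphism aritywise. The most direct route is to exhibit a cell structure directly. Filter $\Surj_k(r)$ by the poset of complexity indices $(c_{ij})$, where $c_{ij}$ is the complexity of $u|_{i,j}$. Each piece then corresponds to a contractible cell in the McClure--Smith cellular model $\mathcal F_k(r)\simeq\Conf_r(\R^k)$, and the same holds for $\E_k$. Comparing the associated graded pieces gives quasi-isomorphisms on each cell, hence globally.

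I expect the main obstacle to be this cellular identification: proving that the subcomplexes indexed by fixed complexity data are acyclic and glue correctly. I would try to prove it by an explicit contracting homotopy of the form ``insert a letter at the front,'' following \cite{McClureSmith2003}.
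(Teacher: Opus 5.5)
\emph{Verdict: gap in the second half.} Your verification that the $\Surj_k$ form an increasing filtration by dg suboperads is correct. Deleting a letter cannot increase the number of switches of any $u|_{i,j}$, and your three cases show that every term of $u\circ_p v$ has complexity at most $\max(c(u),c(v))$. The paper gives no argument here beyond the citation of McClure--Smith. For the comparison with $\EE_k$ you follow the Berger--Fresse route: Berger's theorem that the complexity filtration $\E_k$ of the Barratt--Eccles operad is an $\EE_k$-operad, together with $\TR(\E_k)\subseteq\Surj_k$. That is a legitimate way to obtain the cited result. The problem is the cellular step, which carries all the content: as you state it, it does not hold, and the acyclicity it needs is left unproved.

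The pieces indexed by the complexity data $(c_{ij})$ alone are not contractible. Already in arity $2$, the surjections with $c_{12}\le c$ are the alternating words of length at most $c+1$ beginning with $1$ or with $2$. This complex has the homology of $S^{c-1}\simeq\Conf_2(\R^c)$, not of a point.

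The cells must be indexed by Berger's complete graph poset $\mathcal{K}_k(r)$, which records for each pair $i<j$ a complexity \emph{and} an orientation (for instance, which of $i,j$ occurs first). The closed cell is spanned by the $u$ such that, for every pair, either $c_{ij}(u)$ is smaller than the prescribed value, or it is equal and the orientation is the prescribed one. In arity $2$ the two top cells are the two hemispheres of $S^{c-1}$.

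What must then be proved is twofold: every closed cell of $\E_k(r)$ and of $\Surj_k(r)$ is acyclic, and $\TR$ is compatible with the cells. Since intersections of closed cells are unions of closed cells, an induction over the finite poset $\mathcal{K}_k(r)$ using Mayer--Vietoris then shows that $\TR\colon\E_k(r)\to\Surj_k(r)$ is a quasi-isomorphism.

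Your proposed contraction does not yet give this acyclicity, for two reasons:
\begin{enumerate}
  \item Prepending a fixed letter does not preserve $\Surj_k$: prepending $1$ sends $212\in\Surj_2(2)$ to $1212\notin\Surj_2(2)$. So the letter has to be dictated by the orientation of the cell.
  \item Even with that choice, $\id-(dh+hd)$ is not zero. It is the projection onto the words in which that letter occurs only once, at the front; for example $2321\mapsto\pm 1232$ for the letter $1$. So an induction on the arity is still needed.
\end{enumerate}
This acyclicity is exactly the input that the paper takes from the cited references, and your proposal still leaves it open.
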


This filtration restricts to $\Surjnu$, where $\Surjnu_k$ is now a model for the $\EEnu_k$ operad. This filtration induces a filtration on the Hadamard tensor product $\sLie \otimes \Surj$ simply by setting:
\[
  F_k(\sLie \otimes \Surj) \coloneqq \sLie \otimes \Surj_k
\]
for all $k \geq 1$. This filtration, in turn, induces a complete increasing filtration on the bar construction $\B(\sLie \otimes \Surj)$ by setting:
\[
  F_n\B(\sLie \otimes \Surj)(m) \coloneqq \bigoplus_t \sum_{i_1+ \dots + i_p = n} \bigotimes_{j=1}^p sF_{i_j}(\sLie \otimes \Surj)(l_j),
\]
for all $n \geq 1$, where the first sum is taken over the planar trees $t$ with $m$ leaves and $p$ nodes whose arities are $l_1, \dots, l_p \geq 2$. The identity and nodes in $t$ labelled by the identity are considered in filtration degree $0$. Finally, this induces a complete filtration on the chain complex~\eqref{eq: the chain complex for configurations of k points} which is compatible with all the differentials and gives rise to an associated spectral sequence.

\begin{remark}
  There are, in fact, multiple ways of filtering the complex~\eqref{eq: the chain complex for configurations of k points} using the complexity filtration on $\Surj$.
  For instance, we could have defined the filtration on $\B(\sLie \otimes \Surj)$ by setting $F_n\B(\sLie \otimes \Surj)(m)$ to be the direct sum of all the terms in the bar construction where at least one of the $\Surj$-labels is in filtration degree at least $n$.
  This would give a different filtration, but it would still be compatible with the differentials and would give rise to a spectral sequence converging to the same homology.
\end{remark}

\begin{remark}
  This filtration and its associated spectral sequence could prove useful, for instance, in the case when $M = \mathbb{R}^n$, since $M^+ = S^n$ and $\ch*(S^n)$ is trivial as an $\EE_n$-algebra but not as an $\EE_{n+1}$-algebra. See Section~\ref{subsec: homology of unordered configurations of Rn} for more details on how this relates to known computations of the homology of the (unordered) configuration spaces of $\mathbb{R}^n$.
\end{remark}

\subsection{The homology of unordered configuration spaces of Euclidean spaces}\label{subsec: homology of unordered configurations of Rn}

Let $S^d = \Delta^d / \partial \Delta^d$ be the simplicial model for the $d$-sphere with only two non-degenerate simplices. Let us compute the $\Surj$-algebra structure of $\ch*~(S^d) \cong \kk\langle a_d \rangle$ for all $d \geq 2$.

\begin{definition}\label{def:sphere-admissible}
  A surjective map $u : \underline{n+k} \to \underline{n}$ is \emph{$S^d$-admissible} if:
  \begin{itemize}
    \item the degree $k$ is equal to $d(n-1)$;
    \item there exist $(d+1)$ ``overlapping'' permutations $\sigma_j \in \Sym_n$ for $0 \leq j \leq d$, where $\sigma_j(n) = \sigma_{j+1}(1)$, such that
      \[
        u = (\sigma_0(1), \dots, \sigma_0(n-1), \sigma_1(1), \dots, \sigma_{d-1}(n-1), \sigma_d(1), \dots, \sigma_d(n)),
      \]
      that is, $u$ is obtained by concatenating the first $(n-1)$ values of the first $d$ permutations and the full sequence of the last permutation.
  \end{itemize}
  If $u$ is $S^d$-admissible, then we let $\sigma_i[u]$ be the permutation $\sigma_i$ appearing in the definition of $S^d$-admissibility.
\end{definition}

\begin{proposition}[Cf.~{\cite[Proposition~3.2.5]{BergerFresse04}}]\label{prop:surj-alg-sphere}
  The $\Surjnu$-algebra structure of $\ch*~(S^d) \cong \kk\langle a_d \rangle$ is determined, for any surjection $u$ of arity $n$ and degree $k$ in $\Surjnu(n)_k$, by the following operations on $\ch*~(S^d)$.
  Let $u \in \Surjnu(n)_k$ be a basis element, and let $\mu_u : \ch*~(S^d)^{\otimes n} \to \ch*~(S^d)$ be the induced operation.
  If $u$ is $S^d$-admissible, then the product is given by
  \[
    \mu_u([a_d], \dots, [a_d]) = \epsilon [a_d],
  \]
  where $\epsilon \in \{-1,1\}$ is given by:
  \[
    \epsilon = (-1)^{d \binom{n}{2} + \binom{n}{2} \binom{d}{2}} \prod_{j=0}^{d} \operatorname{sign}(\sigma_j[u]).
  \]
  Otherwise, if $u$ is not $S^d$-admissible, then $\mu_u$ is the zero map on $\ch*~(S^d)$.
\end{proposition}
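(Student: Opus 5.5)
We compute the operations $\mu_u$ directly from the Berger--Fresse/McClure--Smith formula for the action of a surjection on normalized cochains, specialized to the simplicial set $S^d = \Delta^d/\partial\Delta^d$. Recall that for $u \in \Surj(n)_k$ and cochains $x_1,\dots,x_n$, the value of $\mu_u(x_1,\dots,x_n)$ on a simplex $\tau$ of dimension $m$ is
\[
  \mu_u(x_1,\dots,x_n)(\tau) = \sum \pm \prod_{i=1}^n x_i\bigl(\tau|_{\text{union of intervals labelled } i}\bigr),
\]
the sum running over all cuttings $0 = j_0 \leq j_1 \leq \dots \leq j_{n+k} = m$ of the vertex set $\{0,\dots,m\}$ into $n+k$ consecutive intervals $[j_{s-1}, j_s]$ sharing endpoints. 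The $s$-th interval is assigned to the value $u_s$, and $x_i$ is evaluated on the face spanned by the vertices of the intervals assigned to $i$. Since $\ch*~(S^d)$ is one-dimensional, spanned by $a_d$ dual to the unique nondegenerate $d$-simplex $\tau = \Delta^d$, it suffices to evaluate on $\tau$ with all $x_i = a_d$.

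First, degree counting. $\mu_u(a_d,\dots,a_d)$ has degree $nd - k$ and can only be nonzero if $nd - k = d$, i.e.\ $k = d(n-1)$. This is the first admissibility condition. Second, in a given cutting, each factor $a_d(\tau|_{I_i})$ is nonzero only if the face spanned by the vertices assigned to $i$ is all of $\Delta^d$ (non-degenerate, not in $\partial\Delta^d$). So every vertex $0,\dots,d$ must lie in an interval labelled $i$, for every $i$. Because $u$ is non-degenerate (consecutive values differ), an interval of length zero would make the corresponding restriction degenerate. A counting argument then shows the cutting must be rigid: there are $n+k = n + d(n-1)$ intervals over $d+1$ vertices, and each vertex must be covered by all $n$ labels. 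Hence each vertex $p$ is the common endpoint of a block of intervals. Precisely, the intervals containing vertex $p$ alone, together with the intervals $[p,p+1]$, must be arranged so that the labels covering vertex $p$ form a permutation $\sigma_p$ of $\{1,\dots,n\}$, with the last label at vertex $p$ equal to the first label at vertex $p+1$ (the shared interval $[p,p+1]$). Making this precise, and showing that at most one cutting contributes, is exactly the statement that $u$ is the concatenation in Definition~\ref{def:sphere-admissible}, with $\sigma_j[u]$ read off from the labels sitting over vertex $j$. Conversely, if $u$ is admissible, this cutting contributes $\pm 1$, so $\mu_u(a_d,\dots,a_d) = \pm a_d$.

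The remaining step, and the one I expect to be the main obstacle, is the sign $\epsilon$. I would follow Berger--Fresse's sign conventions (\cite[Section~1.2]{BergerFresse04}). The total sign splits into two parts. The first is the Koszul sign for permuting the $n$ cochains of degree $d$ past the interval data, which gives the factor $(-1)^{d\binom n2}$ together with a factor $\operatorname{sign}(\sigma_j)$ for each reordering of inputs along vertex $j$. The second is the \enquote{position sign} of the cutting, which counts how the inner intervals interleave, giving $(-1)^{\binom n2\binom d2}$. I would verify the formula first for $n = 2$ (cup-$i$ products on $S^d$, comparing with \cite[Proposition~3.2.5]{BergerFresse04}) and for $d = 1$, and then argue by induction on $d$. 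The induction would use the fact that adding a vertex appends one permutation block and multiplies the sign by $(-1)^{\binom n2 (d)}\operatorname{sign}(\sigma_{d+1})$, which is consistent with $\binom{d+1}{2} - \binom d2 = d$ and with the $d$-dependence in the claimed exponent.
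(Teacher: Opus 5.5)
Your route differs from the paper's. You evaluate the interval-cut action directly on the unique nondegenerate $d$-simplex of $\Delta^d/\partial\Delta^d$. The paper instead quotes Berger--Fresse for the support and obtains the sign by transport: pullback along table reduction, the Barratt--Eccles sign of \cite[Lemma~2.19]{RocaiLucio2023}, and the duality sign $(-1)^{d\binom{n}{2}}$. Your support argument is essentially sound, but one sentence is false: zero-length intervals do not make the restriction degenerate. In the unique contributing cut, all but $d$ of the $n+k$ intervals have length zero; for $u=1212$ on $S^2$ the cut is $[0,0],[0,1],[1,2],[2,2]$. Your next sentences in fact use such intervals. What forces rigidity is a count. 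The intervals have total length $d$, so they supply $d+n+k=n(d+1)$ vertex incidences. This is exactly what is needed for each of the $n$ faces to be $(0,1,\ldots,d)$ with no repetition. For $n\geq 2$ this rules out intervals of length at least $2$, so $[0,1],\ldots,[d-1,d]$ each occur once, and the block structure and uniqueness of the cut follow as you describe.

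The sign is a genuine gap, and your plan cannot close it. You assert, rather than derive, a split into a Koszul part $(-1)^{d\binom{n}{2}}\prod_j\operatorname{sign}(\sigma_j)$ and a position part $(-1)^{\binom{n}{2}\binom{d}{2}}$. This is not what the interval-cut rule gives. With the rule of Proposition~\ref{prop:torus-simplicial-surj}, the inner positions of the unique cut contribute $\sum_{\text{inner}}p_j=(n-1)\binom{d}{2}$, and the ordering sign reduces to an inversion count of a subword of $u$. Your induction step is also inconsistent with the target: from $d$ to $d+1$ its exponent changes by $\binom{n}{2}(d+1)$, not $\binom{n}{2}d$.

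Most importantly, the displayed $\epsilon$ is incompatible with $\Sym_n$-equivariance, so no correct computation can reach it. Relabeling $u$ by $\tau\in\Sym_n$ replaces each $\sigma_j[u]$ by $\tau\sigma_j[u]$, which multiplies $\prod_{j=0}^{d}\operatorname{sign}(\sigma_j[u])$ by $\operatorname{sign}(\tau)^{d+1}$. Permuting $n$ inputs of degree $d$, however, only produces the Koszul sign $\operatorname{sign}(\tau)^{d}$. For $d=n=2$ the formula gives $\mu_{1212}([a_2],[a_2])=[a_2]$ but $\mu_{2121}([a_2],[a_2])=-[a_2]$. These must coincide because $[a_2]$ has even degree, and the rule of Proposition~\ref{prop:torus-simplicial-surj} gives $-[a_2]$ for both. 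This is an obstruction to the stated formula itself, so it also bears on the paper's transport argument. The $n=2$ base case you propose to check would expose it.

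What your method does establish is the support statement and $\mu_u([a_d],\ldots,[a_d])=\pm[a_d]$ for admissible $u$. The sign is the Berger--Fresse sign of the unique cut, and it has to be computed, not matched to the displayed $\epsilon$.
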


\begin{proof}
  The fact that only surjections that are concatenations of $(d+1)$ permutations act non-trivially follows directly from the description of the $\Surj$-coalgebra structure on $\ch~(S^d)$ given in~\cite[Section 2]{BergerFresse04}. The sign appearing in this coalgebra structure can be computed as follows: since the table reduction morphism $\TR: \E \to \Surj$ involves no signs, it has to agree with the sign of the $\E$-coalgebra structure on $\ch~(S^d)$, given by pulling back the $\Surj$-coalgebra structure along the table reduction map. Thus this sign is given by
  \[
    \epsilon' = (-1)^{\frac{d(d-1)}{2} \cdot \frac{n(n-1)}{2}} \prod_{j=0}^{d} \operatorname{sign}(\sigma_j[u]),
  \]
  using~\cite[Lemma 2.19]{RocaiLucio2023}.
  This gives the second part of the sign exponent.
  Finally, since we want to compute the $\Surj$-algebra structure on $\ch*~(S^d)$, we take the linear dual of the aforementioned $\Surj$-coalgebra structure on $\ch~(S^d)$ and the sign
  \[
    \lambda = (-1)^{d\frac{n(n-1)}{2}}
  \]
  appears when applying the duality pairing. Hence $\epsilon = \epsilon' \lambda$.
\end{proof}

\begin{example}
  Let us consider the element $u = (12121)$ in $\Surj(2)_3$.
  It is $S^3$-admissible, with $\sigma_0[u] = \sigma_2[u] = \id_2$, $\sigma_1[u] = \sigma_3[u] = (12)$.
  It acts on $\ch(S^3)$ as follows:
  \[
    \mu_{(12121)}([a_3],[a_3]) = [a_3];
  \]
  this is the first non-trivial product in the $\Surj$-algebra structure of $\ch(S^3)$.
\end{example}

From this explicit description of the $\Surj$-algebra structure of $\ch*(S^d)$, we can deduce the following remark about the complexity filtration on $\Surj$ and its action on the cochains of the sphere.
This is a special case of~\cite[Theorem~4.1]{HeutsLand2024}.

\begin{corollary}
  Let $d \geq 0$ be an integer.
  The $\Surjnu_d$-algebra structure of $\ch*(S^d)$ is trivial.
\end{corollary}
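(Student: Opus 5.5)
Using Proposition~\ref{prop:surj-alg-sphere}, the statement reduces to a combinatorial claim: every $S^d$-admissible surjection has complexity at least $d+1$, so no basis element of $\Surjnu_d$ is admissible. Since $\ch*~(S^d) \cong \kk\langle a_d\rangle$ and the proposition says every non-admissible $u$ acts by zero, this gives the result. Here ``trivial'' means that all operations of arity $n \geq 2$ vanish; arity $1$ only contains the identity, because the degree must be $0 = d(n-1)$. The degenerate case $d=0$ is handled separately: $\Surjnu_0$ is either empty or not considered, so the claim is vacuous or trivial. For $d=1$, the $S^1$-admissible surjections have degree $n-1$, so $n \geq 2$ forces positive degree, while $\Surj_1$ is concentrated in degree $0$.

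\textbf{The key step.} Let $u$ be $S^d$-admissible of arity $n\ge 2$, written as the concatenation of blocks
\[
B_0 = \sigma_0(1..n-1),\ \dots,\ B_{d-1} = \sigma_{d-1}(1..n-1),\ B_d = \sigma_d(1..n).
\]
The overlap condition $\sigma_j(n)=\sigma_{j+1}(1)$ means that appending $\sigma_j(n)$ to $B_j$ just repeats the first letter of $B_{j+1}$. Hence $u$ is obtained from the full concatenation $\sigma_0\sigma_1\cdots\sigma_d$ by collapsing $d$ adjacent repetitions.

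I would choose the pair $\{i,j\} = \{\sigma_0(1),\sigma_0(2)\}$ and try to prove that the complexity of $u|_{i,j}$ is at least $d$. Since every value appears in each permutation, each of $i$ and $j$ occurs in every block $\sigma_k$ of the uncollapsed word.

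\begin{itemize}
\item If $u|_{i,j}$ switches letters at least once per block, this gives about $d$ switches.
\item More precisely, $u|_{i,j}$ is a concatenation of $d+1$ words, each of which is $ij$ or $ji$, with adjacent repeated letters merged.
\item The number of switches of such a word is $2(d+1)-1$ minus the number of merges. A merge happens exactly when block $k$ ends with the same letter that block $k+1$ starts with.
\end{itemize}

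This count alone only guarantees about $d$ switches. So I would instead use non-degeneracy of $u$ together with the maximum over all pairs. Let $c_k = \sigma_k(n) = \sigma_{k+1}(1)$ be the $d$ overlap letters. Pick any $k$, and take the pair $\{c_k, y\}$ where $y$ is another letter. Restricting $u$ to this pair, each block contains $y$, and $c_k$ is shared between consecutive blocks. I expect the count to give at least $d+1$ switches; for example, $12121$ with $d=3$ has $4$ switches.

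\textbf{Main obstacle.} The hardest part is the precise lower bound: showing that some pair $\{i,j\}$ realizes at least $d+1$ alternations. In $n=2$ this is immediate: $u$ alternates $1,2$ with length $d+2$, so it has $d+1$ switches. For general $n$, I would first try the pair $\{\sigma_0(n-1),\sigma_0(n)\}$ and induct on $d$. At each block boundary I would track which letter of the pair was seen last, aiming to show that each new block contributes at least one switch and that the initial block contributes two. If this bookkeeping fails, the fallback is to cite \cite[Theorem~4.1]{HeutsLand2024}, which establishes $\EE_d$-triviality directly.
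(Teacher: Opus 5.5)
Your reduction is correct and matches the paper's: by Proposition~\ref{prop:surj-alg-sphere} only $S^d$-admissible surjections act nontrivially, so it suffices to show that every $S^d$-admissible $u$ of arity $n \geq 2$ has complexity at least $d+1$. Your separate check for $d=1$, using that $\Surj_1$ is concentrated in degree $0$, is also fine.

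\textbf{The gap.} The key inequality is never proved. The proposal ends with ``I expect'', an unfinished induction, and a fallback citation. Moreover, you discard the argument that actually works. You count the switches of $u|_{i,j}$ as $2(d+1)-1-m$, where $m$ is the number of junctions at which block $k$ ends with the letter that block $k+1$ starts with. There are only $d$ junctions, so $m \leq d$, and the count is at least $d+1$, not ``about $d$''. Your initial target of ``at least $d$'' would not suffice in any case, since $\Surj_d$ contains surjections of complexity exactly $d$.

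\textbf{How to close it.} Fix any pair $i<j$. Restricting the uncollapsed word $\sigma_0\sigma_1\cdots\sigma_d$ to $\{i,j\}$ gives $w_0w_1\cdots w_d$ with each $w_k \in \{ij, ji\}$. Passing to $u$ only deletes one of two adjacent equal letters $c_kc_k$, at those junctions where $c_k \in \{i,j\}$; such a deletion does not change the number of switches. Each $w_k$ contributes one internal switch. Hence $u|_{i,j}$ has at least $d+1$ switches for \emph{every} pair, with no need for non-degeneracy or for choosing a special pair.

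This is a direct, non-inductive form of the paper's argument. The paper inducts on $d$ and checks case by case, according to the positions of $i$ and $j$ in $\sigma_{d-1}$ and $\sigma_d$, that appending $\sigma_d(2),\dots,\sigma_d(n)$ adds at least one alternation to every $u'|_{i,j}$.

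\textbf{On the fallback.} Citing Heuts--Land does not close the gap as stated. Their theorem concerns $\EE_d$-algebras up to equivalence, whereas the corollary asserts that the specific point-set operations $\mu_u$, for $u \in \Surjnu_d$, vanish on the nose. You would still need an argument that homotopical triviality forces this strict vanishing.
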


\begin{proof}
  Suppose that $u \in \Surj(n)$ acts nontrivially on $\ch*(S^d)$.
  By Proposition~\ref{prop:surj-alg-sphere}, the degree of $u$ is $d(n-1)$ and there exist $(d+1)$ overlapping permutations $\sigma_0, \dots, \sigma_d \in \Sym_n$, with $\sigma_j(n) = \sigma_{j+1}(1)$, expressing $u$ as the $S^d$-admissible surjection of Definition~\ref{def:sphere-admissible}.
  Let us show that the complexity of $u$ is at least $d+1$, from which the result follows.
  In fact, let us show the slightly stronger result (by induction on $d$): for all pairs $(i,j)$ with $1 \leq i < j \leq n$, the complexity of $u|_{i,j}$, the restriction of $u$ to the preimage of $\{i,j\}$, is at least $d+1$.

  If $d = 0$, then $u$ is a permutation of $\{1, \dots, n\}$ and $u|_{i,j}$ is either $ij$ or $ji$, which have both complexity $1 > 0$.
  Now, suppose that the result holds for $d-1$.
  The surjection $u$ is obtained from the $S^{d-1}$-admissible surjection $u'$ built from $\sigma_0, \dots, \sigma_{d-1}$ (Definition~\ref{def:sphere-admissible}) by appending $\sigma_d(2), \dots, \sigma_d(n)$, since $\sigma_d(1) = \sigma_{d-1}(n)$ is already the last entry of $u'$.
  By the induction hypothesis, the complexity of $u'|_{i,j}$ is at least $d$.
  Therefore, all we need to check is that $u|_{i,j}$ has at least one more alternation between $i$ and $j$ than $u'|_{i,j}$.
  There are a few cases to consider, depending on where the values $i$ and $j$ appear in the permutations $\sigma_{d-1}$ and $\sigma_d$:
  \begin{itemize}
    \item If $\sigma_{d-1} = (\dots, i, \dots, j, \dots, k)$ (i.e., $i$ appears before $j$ and $j$ is not the last entry), then $\sigma_d$ must be of the form $(k, \dots, j, \dots, i, \dots)$ or $(k, \dots, i, \dots, j, \dots)$.
      Therefore $u|_{i,j}$ is either $\dots ijji$ or $\dots ijij$, which has either one or two more alternations than $u'|_{i,j}$.
      The case where $\sigma_{d-1}$ has $j$ before $i$ and $i$ is not the last entry is symmetric.
    \item If $\sigma_{d-1} = (\dots,i, \dots, j)$ has $j$ as the last entry, then $\sigma_d = (j, \dots, i, \dots)$ must have $j$ as the first entry and $i$ afterwards.
      Thus, $u|_{i,j}$ is $\dots j i j$ which has one more alternation than $u'|_{i,j}$.
      The case where $\sigma_{d-1}$ has $i$ as the last entry is symmetric.
  \end{itemize}
  In any case, the complexity of $u|_{i,j}$ is at least $d+1$.
\end{proof}

The software computation is detailed in Appendix~\ref{sec:computational-aspects}.

\subsection{The homology of unordered configuration spaces of the torus}
\label{sec:homology-of-unordered-configurations-of-the-torus}

In this subsection, we compute the Barratt--Eccles algebra structure of $\ch*(S^1) \otimes \ch*(S^1)$, which is a model for the $\EE_\infty$-algebra of cochains on the torus $T = S^1 \times S^1$. From this computation, we obtain a fully explicit chain complex whose homology is the homology of unordered configuration spaces of the torus, which we then use to derive concrete computational results.

\subsubsection{The $\EE_\infty$-algebra structure of the unreduced circle}
We have that, as a chain complex, $\ch*(S^1) \cong \kk.[*] \oplus \kk.[a_1]$, where $[*]$ is the class corresponding to the base point. To compute its algebra structure over the Barratt--Eccles operad, we first introduce the following combinatorial function.

\medskip

Let $\underline{\sigma} = (\sigma_0, \cdots, \sigma_{s-1})$ be an $s$-tuple of permutations in $\mathbb{S}_n$, for some $n \geq 1$ and $s \geq 0$. This $s$-tuple corresponds to an arity $n$ element of degree $s-1$ in the Barratt--Eccles operad. Let $I_S \subset \{1, \cdots, n\}$ be a subset of size $s$; let us denote its elements by $I_S = \{\alpha_1,\cdots, \alpha_s\}$ with the induced order from the inclusion. We denote by $\sigma_j(I_S)$ the first occurrence of an element of $I_S$ in the permutation $\sigma_j$. This gives an $s$-tuple of elements of $I_S$:
\[
  \underline{\sigma}(I_S) = (\sigma_0(I_S),\cdots,\sigma_{s-1}(I_S))~.
\]
We define the function $\psi_{I_S}: \mathbb{S}_n^{\times s} \longrightarrow \{-1,0,1\}$ as follows: if $\underline{\sigma}(I_S)$ is an $s$-tuple of \emph{distinct} elements of $I_S$, that is, if it defines a permutation of the set $I_S$ (up to identifying it with $\{1,\cdots,s\}$ via the unique order-preserving bijection), then $\psi_{I_S}(\underline{\sigma}) = \mathrm{sign}(\underline{\sigma}(I_S))$; and if $\underline{\sigma}(I_S)$ contains the same element twice, then $\psi_{I_S}(\underline{\sigma}) = 0$.

\begin{proposition}\label{prop: unreduced circle}
  The $\E$-algebra structure of $\ch*(S^1) \cong \kk.[*] \oplus \kk.[a_1]$ is determined, for any element $\underline{\sigma} = (\sigma_0, \cdots, \sigma_{s-1})$ of arity $n$ and degree $s-1$ in $\E(n)_{s-1}$, by the following operations:
  \[
    \mu_{\underline{\sigma}}([*], \cdots, [a_1], \cdots, [*], \cdots, [a_1]) = (-1)^{\frac{s(s-1)}{2}} \psi_{I_S}(\underline{\sigma}) [a_1]~,
  \]
  where there are exactly $s$ copies of $[a_1]$ in the inputs and where $I_S$ is the subset of $\{1,\cdots,n\}$ determined by the positions of the inputs equal to $[a_1]$.
\end{proposition}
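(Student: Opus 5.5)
We compute the Barratt--Eccles action directly from Berger--Fresse's description of the $\E$-coalgebra structure on normalized chains, and then dualize. We take $S^1 = \Delta^1/\partial\Delta^1$, so that $\ch(S^1)$ has basis $[*]$ in degree $0$ and $[a_1]$ in degree $1$.

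Recall from \cite{BergerFresse04} that the action of $\underline{\sigma} = (\sigma_0,\dots,\sigma_{s-1}) \in \E(n)_{s-1}$ on a simplex $x \in X_m$ is obtained in two steps. First, $\underline{\sigma}$ is mapped by table reduction to a sum of surjections. Each surjection $u$ then acts through the interval-cut formula, which sums over decompositions $0 = j_0 \le j_1 \le \dots \le j_{m'} = m$ of the vertex set of $x$ into consecutive overlapping intervals. The $i$-th interval is given to the tensor factor $u(i)$, and factor $k$ receives the face of $x$ spanned by the union of its intervals.

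For $x = [a_1]$ we have $m = 1$, with only the vertices $0$ and $1$. A factor therefore receives either the degenerate/basepoint face $[*]$ (a single vertex) or $[a_1]$ (both vertices). Inputs are evaluated by duality, so we must determine which tensor factors receive $[a_1]$.

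\textbf{Step 1.} For a coproduct term with exactly the factors in $I_S$ receiving $[a_1]$, the total degree count forces exactly $s$ intervals of the form $[0,1]$. Consequently the cut pattern is a single jump inside the surjection word. Summing the table reduction of $\underline{\sigma}$ over cut positions amounts to the following. Each permutation $\sigma_j$ contributes exactly one "jump" letter, and restricting to the factors that receive an edge picks out, for each $j$, the first occurrence in $\sigma_j$ of an element of $I_S$. This is exactly $\sigma_j(I_S)$: the first element of $I_S$ hit by $\sigma_j$ is the one that carries the edge at that level. All other factors receive only vertices, hence $[*]$.

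\textbf{Step 2.} A term survives only if the resulting surjection restricted to $I_S$ is nondegenerate and each factor in $I_S$ receives exactly one edge. This happens precisely when $\underline{\sigma}(I_S)$ consists of distinct elements, i.e.\ when $\psi_{I_S}(\underline{\sigma}) \neq 0$. Otherwise either some factor receives two edges, producing a degenerate face that vanishes in normalized chains, or the table reduction term cancels.

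\textbf{Step 3.} We compute signs. Reordering the $s$ degree-one outputs according to $\underline{\sigma}(I_S)$ gives $\operatorname{sign}(\underline{\sigma}(I_S))$. The Berger--Fresse interval-cut sign, together with the dualization $\ch \to \ch*$, contributes $(-1)^{s(s-1)/2}$, just as in the proof of Proposition~\ref{prop:surj-alg-sphere} for $d=1$ via \cite[Lemma 2.19]{RocaiLucio2023}.

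\textbf{Step 4.} Unital inputs $[*]$ act as the unit and contribute no sign. Operations with a number of $[a_1]$ inputs other than $s$ vanish for degree reasons, since the output lies in degrees $0$ and $1$ only.

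The main obstacle is Step 1. There we must carefully verify that table reduction followed by the interval cut collapses to a single "first occurrence in $I_S$" term per permutation, with no additional cancellations. We would first check small cases, for example $n=2,3$ and $s=2$, against ComCH, and then argue by induction on $s$ using the recursive structure of table reduction.
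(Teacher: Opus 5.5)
Your route (pull the interval-cut coaction back along $\TR$, evaluate it on the $1$-simplex, push to $S^1$ and dualize) is viable. It differs from the paper's proof, which imports the $\E$-coalgebra structure on the chains of $\Delta^1$ from \cite[Proposition~2.14]{RocaiLucio2023}, pushes it forward along $\Delta^1 \to S^1$ and dualizes. The gap is Step~1, which carries the entire content of the statement. It is asserted rather than proved, since you defer it to computer checks and an unspecified induction, and the mechanism you describe is wrong. For $m=1$, every interval cut contains exactly \emph{one} interval $[0,1]$, at the position $J$ where the cut jumps from $0$ to $1$; all other intervals are $[0,0]$ or $[1,1]$. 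So the degree count does not force ``$s$ intervals of the form $[0,1]$'', and the $\sigma_j$ do not each contribute a jump letter. A factor receives the edge in one of two ways: it is the letter at $J$ and occurs nowhere else, or it occurs exactly twice, once before $J$ and once after. Every other occurrence pattern yields a degenerate face. For the same reason there is no ``cancellation'' in Step~2. Nor can the sign in Step~3 be transported from Proposition~\ref{prop:surj-alg-sphere}, which is phrased through the decomposition $\sigma_j[u]$ of a surjection, not of a Barratt--Eccles element.

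Here is how to close the gap. Letters occur at most twice and the degree is $s-1$. Hence, in a contributing table term of $\TR(\underline{\sigma})$, the repeated letters are exactly the caesuras $c_0,\dots,c_{s-2}$ (last entries of rows $0,\dots,s-2$). These are pairwise distinct, and each occurs exactly twice, first before $J$ and then after it. It follows that $J$ lies in the last row. Every other entry of rows $0,\dots,s-2$ then occurs only once, before $J$, so it receives a vertex and must lie outside $I_S$. Only such letters are deleted when forming the reduced rows, so $c_j=\sigma_j(I_S)$. The jump letter is the remaining element of $I_S$ and must precede all the $c_j$ in the last row, so it is $\sigma_{s-1}(I_S)$. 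Thus exactly one pair (table term, cut) contributes when $\underline{\sigma}(I_S)$ is a permutation, and none contributes otherwise. With the conventions of Proposition~\ref{prop:torus-simplicial-surj}, the three signs of this term are as follows:
\begin{itemize}
\item the positional sign vanishes, since every inner position has $p_j=0$;
\item the ordering sign is the sign of $(c_0,\dots,c_{s-2},\sigma_{s-1}(I_S))$, i.e.\ $\psi_{I_S}(\underline{\sigma})$;
\item the duality sign is $(-1)^{s(s-1)+s(s-1)/2}=(-1)^{s(s-1)/2}$.
\end{itemize}

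A shorter alternative uses that $[*]$ is the unit of the unital $\E$-algebra. Inserting it at the positions outside $I_S$ replaces $\underline{\sigma}$ by $(\sigma_0|_{I_S},\dots,\sigma_{s-1}|_{I_S})$, whose first letters are exactly the $\sigma_j(I_S)$. This reduces everything to the case $I_S=\{1,\dots,n\}$ of \cite[Theorem~3.2.4]{BergerFresse04}.

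Finally, Step~4 is not a degree argument. With $s-1$ inputs equal to $[a_1]$, degree allows an output that is a multiple of $[*]$. That output vanishes for $s\geq 2$ because every cut of a vertex by a surjection with a repeated letter is degenerate. For $s=1$ with no input $[a_1]$, the output is the unit $[*]$, not zero.
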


\begin{proof}
  This follows from the computation of the $\E$-coalgebra structure on $\ch(\Delta^1)$ in \cite[Proposition 2.14]{RocaiLucio2023}, first by postcomposing the decompositions of $[a_1]$ along the projection $\Delta^1 \twoheadrightarrow S^1$ that sends $[0]$ and $[1]$ to $[*]$, and then by applying linear duality to obtain the explicit $\E$-algebra structure on $\ch*(S^1)$.
\end{proof}

\begin{remark}
  We compute the $\E$-algebra structure because $\E$ is a Hopf operad (unlike the surjection operad $\Surj$), which in particular means that $\ch*(S^1) \otimes \ch*(S^1)$ has a canonical $\E$-algebra structure via the diagonal map of $\E$.
\end{remark}

\begin{example}
  When $I_S = \{1,\cdots,n\}$, $\psi_{I_S}(\underline{\sigma})$ (when nonzero) is given by the sign of the permutation $(\sigma_0(1), \cdots,\sigma_{n-1}(1))$, which recovers the formula of \cite[Theorem 3.2.4]{BergerFresse04}.
\end{example}

\subsubsection{The $\EE_\infty$-algebra structure of $\ch*(S^1) \otimes \ch*(S^1)$}
Keeping the same notation as in the previous subsection, let us write down an explicit basis of $\ch*(S^1) \otimes \ch*(S^1)$ as a chain complex as follows:
\[
  [0] = [*] \otimes [*] \quad [\alpha] = [a_1] \otimes [*], \quad [\beta] = [*] \otimes [a_1] \quad \text{and} \quad [\gamma] = [a_1] \otimes [a_1].
\]
\begin{proposition}\label{prop: torus algebra}
  The $\E$-algebra structure of $\ch*(S^1) \otimes \ch*(S^1)$ is determined as follows. Let $\underline{\sigma} = (\sigma_0, \cdots, \sigma_{k-1})$ be an element of arity $n$ and degree $k-1$ in $\E(n)_{k-1}$, and let $z_1, \cdots, z_n$ be basis elements among $[0], [\alpha], [\beta], [\gamma]$, with $c$ inputs equal to $[\gamma]$, $a$ equal to $[\alpha]$, and $b$ equal to $[\beta]$, at positions given by the subsets $I_C, I_A, I_B \subset \{1, \cdots, n\}$ respectively. Then:

  \begin{enumerate}
    \item if $a = b = c = 0$ and $k = 1$, that is, if $\underline{\sigma} = (\sigma_0)$ is a single permutation, then
      \[
        \mu_{\underline{\sigma}}([0]^{\otimes n}) = [0]~;
      \]

    \item if $b = c = 0$ and $k = a$, then
      \[
        \mu_{\underline{\sigma}}(z_1, \cdots, z_n) = (-1)^{\frac{a(a-1)}{2}} \psi_{I_A}(\underline{\sigma})[\alpha]~;
      \]

    \item if $a = c = 0$ and $k = b$, then
      \[
        \mu_{\underline{\sigma}}(z_1, \cdots, z_n) = (-1)^{\frac{b(b-1)}{2}} \psi_{I_B}(\underline{\sigma})[\beta]~;
      \]

    \item if $c + a \geq 1$, $c + b \geq 1$ and $k = 2c + a + b - 1$, then
      \[
        \mu_{\underline{\sigma}}(z_1, \cdots, z_n) = (-1)^{\varepsilon} \psi_{I_{C \sqcup A}}(\underline{\sigma}_1) \psi_{I_{C \sqcup B}}(\underline{\sigma}_2) [\gamma]~,
      \]
      where $I_{C \sqcup A} = I_C \cup I_A$ and $I_{C \sqcup B} = I_C \cup I_B$, where
      \[
        \underline{\sigma}_1 = (\sigma_0, \cdots, \sigma_{c+a-1}) \quad \text{and} \quad \underline{\sigma}_2 = (\sigma_{c+a-1}, \cdots, \sigma_{2c+a+b-2})
      \]
      \emph{overlap} in the permutation $\sigma_{c+a-1}$, and where
      \[
        \varepsilon = \frac{(c+a)(c+a-1)}{2} + \frac{(c+b)(c+b-1)}{2} + (c+a)(c+b-1) + T~,
      \]
      with $T = \#\bigl\{p < q \;\big|\; z_p \in \{[\gamma], [\beta]\},\ z_q \in \{[\gamma], [\alpha]\}\bigr\}$;

    \item all the other products are zero.
  \end{enumerate}
  When the inputs are sorted as $([\gamma]^{\otimes c}, [\alpha]^{\otimes a}, [\beta]^{\otimes b}, [0]^{\otimes d})$, one has $T = \frac{c(c-1)}{2} + ca$, and the exponent of case~(4) reduces to
  \[
    \varepsilon \equiv \frac{c(c-1)}{2} + \frac{a(a-1)}{2} + \frac{b(b-1)}{2} + ab + ca + a \pmod{2}~.
  \]
\end{proposition}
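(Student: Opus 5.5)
The plan is to compute the structure directly from the Hopf diagonal \eqref{eq:hopf-e} of $\E$ combined with Proposition~\ref{prop: unreduced circle}. By definition, the $\E$-algebra structure on a tensor product $A \otimes B$ of $\E$-algebras is
\[
  \mu_{\underline{\sigma}}(x_1 \otimes y_1, \dots, x_n \otimes y_n) = \sum_{i=0}^{k-1} \pm\, \mu_{(\sigma_0,\dots,\sigma_i)}(x_1,\dots,x_n) \otimes \mu_{(\sigma_i,\dots,\sigma_{k-1})}(y_1,\dots,y_n),
\]
where the sign is Koszul: it comes from moving $(\sigma_i,\dots,\sigma_{k-1})$ (degree $k-1-i$) past the $x$'s, and shuffling the $y_p$'s past the $x_q$'s with $q>p$. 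I would apply this with $A = B = \ch*(S^1)$ and read off each coordinate.

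\textbf{Step 1: degree bookkeeping.} Each $z_p$ splits as $x_p \otimes y_p$. Here $x_p = [a_1]$ exactly when $z_p \in \{[\gamma],[\alpha]\}$, giving $s_1 = c+a$ such inputs, and $y_p = [a_1]$ exactly when $z_p \in \{[\gamma],[\beta]\}$, giving $s_2 = c+b$. By Proposition~\ref{prop: unreduced circle}, the first factor is nonzero only if $i+1 = s_1$, with output $[a_1]$ when $s_1 \geq 1$. When $s_1 = 0$ the factor is $[*]$, and this needs the length-one case $i = 0$; I will check that the $s=0$ case of the proposition yields $\mu_{(\sigma_0)}([*],\dots)=[*]$. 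The same analysis applies to the second factor, which has length $k-i$.
So exactly one $i$ can contribute, and this forces $k = s_1 + s_2 - 1$ when both $s_1, s_2 \geq 1$. This gives case (4), with the overlap at $\sigma_{c+a-1}$ and $k = 2c+a+b-1$. If $s_2 = 0$, then $i = k-1$ and $k = s_1 = a$ with $c=0$, giving case (2). Case (3) is symmetric, and case (1) is $s_1 = s_2 = 0$, which forces $k = 1$. Every other configuration has no contributing $i$, which gives case (5).

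\textbf{Step 2: signs.} The coefficient is the product of the two $\psi$ factors, the two sign factors $(-1)^{s(s-1)/2}$ from Proposition~\ref{prop: unreduced circle}, and the Koszul signs. The Koszul signs are:
\begin{itemize}
  \item moving the second Barratt--Eccles factor, of degree $k-1-i = s_2-1$, past $s_1$ inputs of degree $1$, which contributes $s_1(s_2-1)$, i.e.\ the term $(c+a)(c+b-1)$;
  \item the shuffle of the tensor factors, which contributes $T$, counting pairs $p<q$ with $y_p$ odd and $x_q$ odd.
\end{itemize}
In cases (2) and (3) these extra signs should vanish. In case (2) the second factor has degree $0$ and all $y$'s are even. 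In case (3) the first factor has degree $0$ and all $x$'s are even; I will verify that this also holds under the paper's identification $[\beta]=[*]\otimes[a_1]$.

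\textbf{Step 3: sorted inputs.} For inputs sorted as stated, the pairs counted by $T$ are: $\gamma$ before $\gamma$, giving $\binom{c}{2}$; and $\gamma$ before $\alpha$, giving $ca$. The $\beta$'s come after all $\alpha$'s and $\gamma$'s, so they contribute nothing. It then remains to reduce $\varepsilon$ mod $2$, which is elementary algebra using $\binom{c+a}{2} = \binom{c}{2} + \binom{a}{2} + ca$.

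The main obstacle is getting the sign conventions exactly right in Step 2. This means fixing the Koszul convention for the Hopf structure action on $A \otimes B$, including the order in which the operation is split and the inputs are shuffled. Getting this right is what pins down the precise form of $\varepsilon$.
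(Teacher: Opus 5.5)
Your proposal is correct and matches the paper's proof essentially step for step. You expand $\mu_{\underline{\sigma}}$ on $\ch*(S^1)\otimes\ch*(S^1)$ via the Hopf diagonal with Koszul sign $\eta_i = (k-1-i)\sum_p |x_p| + \sum_{p<q}|y_p||x_q|$, use Proposition~\ref{prop: unreduced circle} to see that at most one summand survives (with $i = c+a-1$ and $\eta_i = (c+a)(c+b-1) + T$ in case~(4), and $\eta_i = 0$ in cases~(2) and~(3)), and then reduce $\varepsilon$ modulo $2$ for sorted inputs; the only slip is writing ``exactly one $i$ can contribute'' where you mean ``at most one''.
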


\begin{proof}
  Since the diagonal~\eqref{eq:hopf-e} makes $\E$ a Hopf operad, the tensor product of two $\E$-algebras $A$ and $B$ is again an $\E$-algebra: for homogeneous elements $x_p \in A$ and $y_p \in B$,
  \[
    \mu_{\underline{\sigma}}(x_1 \otimes y_1, \cdots, x_n \otimes y_n)
    = \sum_{i=0}^{k-1} (-1)^{\eta_i} \mu_{(\sigma_0, \cdots, \sigma_i)}(x_1, \cdots, x_n) \otimes \mu_{(\sigma_i, \cdots, \sigma_{k-1})}(y_1, \cdots, y_n)~,
  \]
  where
  \[
    \eta_i = (k - 1 - i) \sum_{p=1}^{n} |x_p| + \sum_{p < q} |y_p| |x_q|
  \]
  is the Koszul sign of the reordering
  \begin{multline*}
    (\sigma_0, \cdots, \sigma_i) \otimes (\sigma_i, \cdots, \sigma_{k-1}) \otimes x_1 \otimes y_1 \otimes \cdots \otimes x_n \otimes y_n
    \longmapsto \\ \longmapsto
    (\sigma_0, \cdots, \sigma_i) \otimes x_1 \otimes \cdots \otimes x_n \otimes (\sigma_i, \cdots, \sigma_{k-1}) \otimes y_1 \otimes \cdots \otimes y_n~.
  \end{multline*}
  We apply this to $A = B = \ch*(S^1)$ with the structure of Proposition~\ref{prop: unreduced circle}, writing each input as $z_p = x_p \otimes y_p$. The first tensor factor sees $c + a$ inputs equal to $[a_1]$, at the positions $I_{C \sqcup A}$, and the second one sees $c + b$, at the positions $I_{C \sqcup B}$. By Proposition~\ref{prop: unreduced circle}, the $i$-th summand vanishes unless the first factor consists of $i + 1 = c + a$ permutations (or $i = 0$ when $c + a = 0$, in which case the single permutation acts by the augmentation on the copies of $[*]$), and similarly unless $k - i = c + b$ (or $k - i = 1$ when $c + b = 0$). Hence at most one summand survives, and matching the possible outputs $[*]$ or $[a_1]$ of the two factors yields the case distinction of the statement, together with the degree conditions on $k$.

  In case~(4), the surviving summand is the one with $i = c + a - 1$, so the two circle factors are evaluated on $\underline{\sigma}_1$ and $\underline{\sigma}_2$, which overlap in $\sigma_{c+a-1}$. Proposition~\ref{prop: unreduced circle} contributes the factors $(-1)^{\frac{(c+a)(c+a-1)}{2}} \psi_{I_{C \sqcup A}}(\underline{\sigma}_1)$ and $(-1)^{\frac{(c+b)(c+b-1)}{2}} \psi_{I_{C \sqcup B}}(\underline{\sigma}_2)$, while $\sum_p |x_p| = c + a$, the tuple $\underline{\sigma}_2$ has degree $c + b - 1$, and $\sum_{p<q} |y_p| |x_q| = T$, so that $\eta_i = (c+b-1)(c+a) + T$; adding up the exponents gives $\varepsilon$. In cases~(2) and~(3), the surviving summand is the one with $i = k - 1$ (resp.\ $i = 0$) and $\eta_i = 0$, since $|y_p| = 0$ for all $p$ (resp.\ $|x_q| = 0$ for all $q$).

  Finally, when the inputs are sorted as $([\gamma]^{\otimes c}, [\alpha]^{\otimes a}, [\beta]^{\otimes b}, [0]^{\otimes d})$, the pairs counted by $T$ are the $\frac{c(c-1)}{2}$ pairs of inputs both equal to $[\gamma]$ together with the $ca$ pairs of an input $[\gamma]$ followed by an input $[\alpha]$, and the announced reduction of $\varepsilon$ follows by a direct computation modulo $2$.
\end{proof}

\begin{remark}
  Although $[\gamma]$ has even degree, the sign of case~(4) genuinely depends on the positions of the inputs through the interleaving number $T$: both tensor factors of $[\gamma] = [a_1] \otimes [a_1]$ have odd degree, so exchanging an input $[\gamma]$ with an adjacent $[\alpha]$ or $[\beta]$ changes the sign of the operation, while exchanging it with an adjacent $[0]$ or $[\gamma]$ does not. In particular, the sign cannot be recovered from the sorted-input formula by the Koszul rule for the total degrees of the inputs. For instance, for any $\sigma \in \mathbb{S}_2$,
  \[
    \mu_{(\sigma)}([\alpha], [\beta]) = [\gamma] = - \mu_{(\sigma)}([\beta], [\alpha])~,
  \]
  recovering the graded commutativity of the cup product $[\alpha] \cup [\beta] = [\gamma]$ on the cohomology of the torus. These formulas, as well as Proposition~\ref{prop: unreduced circle}, have been verified by computer in low arities and degrees against the Berger--Fresse interval-cut action on $\ch*(\Delta^1)$ \cite{BergerFresse04}, restricted along $\ch*(S^1) \hookrightarrow \ch*(\Delta^1)$ and pulled back by table reduction.
\end{remark}
\begin{remark}\label{rmk:torus-no-factorization}
  In contrast with the case of spheres (cf.\ the proof of Proposition~\ref{prop:surj-alg-sphere}), the $\E$-algebra structure of Proposition~\ref{prop: torus algebra} does \emph{not} factor through the surjection operad: some elements of $\ker(\TR : \E \to \Surj)$ act nontrivially on $\ch*(S^1) \otimes \ch*(S^1)$.
  Consider the element $x = [123|132|213] \in \E(3)_2$.
  Its table reduction is the single surjection $\TR(x) = (12323)$, and the preimage of $(12323)$ produced by the algorithm of~\cite[Section~1.4.2]{BergerFresse04} is $s(12323) = [123|132|123]$, so that $z = x - s(\TR(x))$ lies in $\ker \TR$.
  However, one computes
  \[
    \mu_x([\beta], [\gamma], [\alpha]) = -[\gamma]
    \qquad \text{and} \qquad
    \mu_{s(\TR(x))}([\beta], [\gamma], [\alpha]) = 0~,
  \]
  so that $\mu_z([\beta], [\gamma], [\alpha]) = -[\gamma] \neq 0$, while a factorization $\mu = \mu' \circ \TR$ would force $\mu_z = 0$.
  Since the coefficient of the witness is $\pm 1$, the obstruction survives over every base ring.
  An exhaustive computer search in arity $n \leq 3$ shows that the failures occur exactly on the input multisets $\{[\alpha], [\beta], [\gamma]\}$ in degree $2$ and $\{[\beta], [\gamma], [\gamma]\}$ in degree $3$; the asymmetry between $[\alpha]$ and $[\beta]$ reflects the asymmetry of the Alexander--Whitney diagonal.
\end{remark}

\subsubsection{A simplicial torus model over the surjection operad}\label{sec:torus-simplicial}
Since the structure of Proposition~\ref{prop: torus algebra} does not descend to $\Surj$, we now introduce a second, slightly larger model which is a genuine $\Surjnu$-algebra.
Instead of the tensor product $\ch*(S^1) \otimes \ch*(S^1)$, we consider the normalized cochains of the \emph{product of simplicial sets} $T^2_\Delta \coloneqq S^1 \times S^1$, where $S^1 = \Delta^1/\partial\Delta^1$ as before.
The normalized cochain complex of any simplicial set carries a natural $\Surjnu$-algebra structure, dual to the interval-cut coalgebra structure of \cite[Section~2.2]{BergerFresse04}; naturality immediately gives the action on $\ch*(T^2_\Delta)$ in terms of the finitely many nondegenerate simplices of $T^2_\Delta$.

The simplicial set $T^2_\Delta$ has six nondegenerate simplices: the vertex $v$, three edges $a = a_1 \times v$, $b = v \times a_1$ and the diagonal edge $g$, and the two triangles $t_1$, $t_2$ given by the two $(1,1)$-shuffles.
Hence $\ch*(T^2_\Delta)$ has rank $6$, with $[v]$ in degree $0$, the duals $[a], [b], [g]$ of the edges in degree $-1$, and $[t_1], [t_2]$ in degree $-2$.
The simplicial boundary is $\partial t_1 = \partial t_2 = a + b - g$ (and vanishes on the edges in normalized chains), so the cochain differential is given by
\[
  \delta[a] = \delta[b] = [t_1] + [t_2]~, \qquad \delta[g] = -[t_1] - [t_2]~,
\]
and zero on the other generators.
In cohomology, $[a] + [g]$ and $[b] + [g]$ generate $H^{-1}$ and $[t_1] = -[t_2]$ generates $H^{-2}$.

Because every nondegenerate simplex of $T^2_\Delta$ has dimension at most $2$, the interval-cut action collapses to the following explicit description.
Recall from~\cite[Section~2.2.1]{BergerFresse04} that an \emph{interval cut} of $\Delta^m$ associated to a surjection $u = (u_1, \dots, u_{n+k}) \in \Surjnu(n)_k$ is a nondecreasing sequence $0 = p_0 \leq p_1 \leq \dots \leq p_{n+k-1} \leq p_{n+k} = m$, giving consecutive intervals $I_j = [p_{j-1}, p_j] \subseteq [0, m]$ that overlap at their endpoints.
We call the position $j$ \emph{final} if it is the last occurrence of the value $u_j$ in $u$, and \emph{inner} otherwise.
For $1 \leq i \leq n$, let $y_i$ denote the concatenation of the intervals $I_j$ with $u_j = i$, in increasing order of $j$.

\begin{proposition}\label{prop:torus-simplicial-surj}
  Let $u \in \Surjnu(n)_k$ be a nondegenerate surjection and let $z_1, \dots, z_n$ be basis elements of $\ch*(T^2_\Delta)$, dual to nondegenerate simplices of dimensions $n_1, \dots, n_n$.
  Set $m = \sum_i n_i - k$.
  Then $\mu_u(z_1, \dots, z_n) = 0$ unless $0 \leq m \leq 2$, and otherwise, for every nondegenerate $m$-simplex $x$ of $T^2_\Delta$,
  \[
    \langle \mu_u(z_1, \dots, z_n), x \rangle
    = \sum_{\text{admissible cuts}} (-1)^{\varepsilon_{\mathrm{pos}} + \varepsilon_{\mathrm{ord}} + \varepsilon_{\mathrm{dual}}}~,
  \]
  where an interval cut of $\Delta^m$ associated to $u$ is \emph{admissible} if, for every $1 \leq i \leq n$, the vertex sequence $y_i$ is strictly increasing and the restriction of $x$ to $y_i$ is the nondegenerate simplex dual to $z_i$ (restrictions to other simplices are degenerate and pair to zero: the restriction of any $x$ to a vertex is $v$; the restriction of an edge to $(0,1)$ is that edge; and the restrictions of $t_1$ to $(0,1)$, $(1,2)$, $(0,2)$ are $a$, $b$, $g$ respectively, while those of $t_2$ are $b$, $a$, $g$).
  The sign exponents are
  \[
    \varepsilon_{\mathrm{pos}} = \sum_{j \text{ inner}} p_j~, \qquad
    \varepsilon_{\mathrm{ord}} = \sum_{\substack{j < j' \\ u_j > u_{j'}}} \ell_j \ell_{j'}~, \qquad
    \varepsilon_{\mathrm{dual}} = k \sum_i n_i + \sum_{i < i'} n_i n_{i'}~,
  \]
  where $\ell_j = p_j - p_{j-1}$ if the position $j$ is final and $\ell_j = p_j - p_{j-1} + 1$ if it is inner.
\end{proposition}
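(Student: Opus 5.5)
The plan is to deduce the formula from the Berger--Fresse interval-cut coalgebra structure on normalized chains \cite[Section~2.2]{BergerFresse04}, by naturality and linear duality. First, fix a nondegenerate $m$-simplex $x$ of $T^2_\Delta$. By naturality of the interval-cut action with respect to the characteristic map $\Delta^m \to T^2_\Delta$ of $x$, the coproduct $\Delta_u(x) \in \ch(T^2_\Delta)^{\otimes n}$ is the image of $\Delta_u(\iota_m)$, where $\iota_m$ is the fundamental simplex of $\Delta^m$. The Berger--Fresse formula writes $\Delta_u(\iota_m)$ as a signed sum, over all interval cuts $0 = p_0 \leq \dots \leq p_{n+k} = m$, of tensors $y_1 \otimes \dots \otimes y_n$. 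Here $y_i$ is the face of $\Delta^m$ spanned by the concatenated intervals labelled $i$, and it is zero if a vertex repeats. A degree count shows that a face $y_i$ paired against $z_i$ has dimension $n_i$. The total length $\sum_j (p_j - p_{j-1}) = m$ is related to $\sum_i n_i$ by the $k$ inner overlaps, which gives $m = \sum_i n_i - k$. Since $T^2_\Delta$ has no nondegenerate simplices above dimension $2$, the operation vanishes unless $0 \le m \le 2$.

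Second, I would compute the pairing $\langle \mu_u(z_1,\dots,z_n), x\rangle$, which up to the duality sign equals $\langle z_1 \otimes \dots \otimes z_n, \Delta_u(x)\rangle$. After pushing forward along $x$, a term survives exactly when each $x|_{y_i}$ is nondegenerate, which forces $y_i$ to be strictly increasing, and equals the simplex dual to $z_i$. This is the admissibility condition. The listed restrictions, namely that the faces of $t_1$ and $t_2$ are $a,b,g$ in the stated order, are read off from the two $(1,1)$-shuffles. This also confirms that degenerate restrictions pair to zero in normalized cochains.

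Third, I would identify the signs. The first two contributions can be taken directly from Berger--Fresse. The factor $\varepsilon_{\mathrm{pos}}$ is their position sign, coming from the inner endpoints $p_j$. The factor $\varepsilon_{\mathrm{ord}}$ is the permutation sign of reordering the intervals from the order of $j$ into the grouping by value $u_j$. In this reordering, each interval carries degree $\ell_j$: inner intervals count one more vertex, since their endpoint is shared. The factor $\varepsilon_{\mathrm{dual}}$ comes from dualizing. It collects the Koszul sign of moving the degree-$k$ operation past $\sum_i n_i$ worth of cochain degrees, plus the standard sign $\sum_{i<i'} n_i n_{i'}$ from pairing $(\ch^{\otimes n})^\vee$ with $(\ch^\vee)^{\otimes n}$.

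The main obstacle is the sign bookkeeping: matching the conventions of \cite[Section~1.2.3, 2.2]{BergerFresse04} precisely, especially the lengths $\ell_j$ and the exact form of $\varepsilon_{\mathrm{dual}}$. I would fix these by checking against known cases. One check is the cup product $\mu_{12}$, which should recover the Alexander--Whitney pairing on $t_1$ and $t_2$. Another is compatibility with the differential $\delta$, and a third is consistency with Proposition~\ref{prop:surj-alg-sphere} via the collapse $T^2_\Delta \to S^2$. Finally, I would cross-check by computer in low arity and degree.
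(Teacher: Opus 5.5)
Your proposal is correct and follows essentially the same route as the paper: dualize the Berger--Fresse interval-cut coaction on normalized chains, pulled back along the characteristic map $\Delta^m \to T^2_\Delta$ of $x$. As in the paper, admissibility records exactly the cuts whose factors are nondegenerate and match the $z_i$, and the sign splits into the Berger--Fresse position and permutation signs plus the Koszul sign of the duality pairing, with conventions deferred to \cite{BergerFresse04} and checked by computer.
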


\begin{proof}
  The formula is the linear dual of the natural interval-cut coaction of \cite[Section~2.2]{BergerFresse04} on the normalized chains: the coefficient $\langle \mu_u(z_1, \dots, z_n), x \rangle$ equals the coefficient of $z_1 \otimes \dots \otimes z_n$ in the interval-cut coproduct of the simplex $x$, pulled back to $\Delta^m$ along the classifying map of $x$, with $\varepsilon_{\mathrm{pos}} + \varepsilon_{\mathrm{ord}}$ the sign of \cite[Section~2.2.2--2.2.4]{BergerFresse04} and $\varepsilon_{\mathrm{dual}}$ the Koszul sign of the duality pairing.
  The admissibility condition records exactly the cuts whose factors are nondegenerate simplices matching the duals $z_i$.
\end{proof}

\begin{example}
  For $u = (12) \in \Surjnu(2)_0$, the only nonvanishing products of degree~$-1$ generators are
  \[
    \mu_{(12)}([a], [b]) = -[t_1]~, \qquad \mu_{(12)}([b], [a]) = -[t_2]~,
  \]
  so that in cohomology $([a]+[g]) \cup ([b]+[g]) = -[t_1]$, matching $[\alpha] \cup [\beta] = [\gamma]$ in the model of Proposition~\ref{prop: torus algebra} up to the identification of the fundamental classes.
  For $u = (121) \in \Surjnu(2)_1$, one finds $\mu_{(121)}(e, e) = -e$ for each edge cochain $e \in \{[a], [b], [g]\}$ (the cup-$1$ squares).
\end{example}

\begin{remark}
  We have implemented Proposition~\ref{prop:torus-simplicial-surj} and machine-verified it against the defining naturality formula, exhaustively for $n \leq 3$ and $k \leq 4$ over $\mathbb{Q}$ on all input tuples, and on samples in arity $4$ and over $\F_2$, $\F_3$.
  In this entire range, every nonzero coefficient arises from a \emph{unique} admissible cut; in particular all structure constants of the action are $0$ or $\pm 1$.
\end{remark}

\begin{remark}\label{rmk:torus-no-strict-qiso}
  The two models are $\EE_\infty$-quasi-isomorphic --- both compute the cochains of the torus, and they are connected by the canonical zigzag of quasi-isomorphisms of $\E$-algebras provided by the Eilenberg--Zilber theory --- but \emph{not} by any strict morphism.
  Indeed, pulling back the structure of Proposition~\ref{prop:torus-simplicial-surj} along $\TR$ to compare both as $\E$-algebras, an exhaustive computer search (parametrizing all chain maps and imposing the morphism equations for all operations of arity $\leq 3$ and degree $\leq 2$) shows that over $\mathbb{Q}$, $\F_2$ and $\F_3$, every strict morphism of $\E$-algebras between the two models, in either direction, vanishes on the degree~$-2$ generators, and hence is not a quasi-isomorphism.
  This is another instance of the incompatibility of table reduction with the comodule structures discussed after Theorem~\ref{thm:main-surj}.
\end{remark}

\begin{remark}
  As a consequence, the explicit chain complex~\eqref{eq: the chain complex for configurations of k points} computing the homology of unordered configuration spaces of the torus can be implemented in two ways: with the model of Proposition~\ref{prop: torus algebra}, keeping the Barratt--Eccles factor in the comodule structure, or with the model of Proposition~\ref{prop:torus-simplicial-surj}, table-reducing the comodule structure to the surjection operad exactly as in the Euclidean case of Theorem~\ref{thm:main-surj}.
  We have implemented both in~\cite{idrissiRocaLucioSoftware} (see Appendix~\ref{sec:computational-aspects}) and checked that they compute the same Betti numbers in low weights and degrees over $\mathbb{Q}$, $\F_2$ and $\F_3$.
\end{remark}

\subsubsection{An explicit chain complex for the configuration spaces of the torus}

The mod 2 homology of unordered configuration spaces of the torus is well-studied.
It is completely understood as a graded vector space thanks to the work of Bödigheimer--Cohen--Taylor~\cite{BODIGHEIMER1989111}, and its Dyer--Lashof algebra structure is described by~\cite{zhangQuillenHomologySpectral2025}.
By constrast, the homology over an odd prime $p$ is not known in general and much more subtle.
An important piece of information is the following:

\begin{theorem}[{Chen--Zhang~\cite{ChenZhang2022}}]
  Let $p$ be an odd prime.
  The integral homology of $\UConf_k(T^2)$ has no $p$-power torsion for $k \leq p$.
  Equivalently, the Betti numbers of $\UConf_k(T^2)$ over $\F_p$ are the same as those over $\Q$ for $k \leq p$.
\end{theorem}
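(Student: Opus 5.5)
The plan is to reprove the statement using the explicit complexes of Theorem~\ref{thm:main}, applied to $M = T^2$, $d = 2$ and $V = \kk$ in weight $1$. I would compare the weight-$k$ parts for $\kk = \Q$ and $\kk = \F_p$ through a model defined over $\mathbb{Z}_{(p)}$.

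First, the two formulations are equivalent. $\UConf_k(T^2)$ has the homotopy type of a finite CW complex, so its integral homology is finitely generated. By the universal coefficient theorem, $\dim_{\F_p} H_i(\UConf_k(T^2);\F_p) = \dim_\Q H_i(\UConf_k(T^2);\Q)$ for all $i$ if and only if there is no $p$-torsion. The left-hand side of Theorem~\ref{thm:main} with $V = \kk$ in weight $k$ is exactly $\ch(\Conf_k(T^2))_{h\Sym_k}$, which computes $H_*(\UConf_k(T^2);\kk)$ because the $\Sym_k$-action is free. It therefore suffices to prove that the weight-$k$ part of the right-hand side has the same homology dimensions over $\F_p$ as over $\Q$ for $k \le p$.

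Second, I treat the range $k < p$. In weights $< p$, every symmetric group $\Sym_n$ that occurs, with $n \le k$, has order invertible in $\F_p$. Consequently, in these arities:
\begin{enumerate}
\item the weight-$\le k$ part of $\OB(\sLie \otimes \E)\circ(s^2\kk)$ is quasi-isomorphic to the free shifted Lie algebra, with no Dyer--Lashof-type classes;
\item the coinvariants $\otimes_{\Sym_n}$ in $\B_\iota$ are exact;
\item the $\Enu$-algebra $\ch*~(T^{2,+})$ can be replaced in arities $< p$ by its strictly commutative formal model.
\end{enumerate}
For the third item I would use Proposition~\ref{prop: torus algebra}, which is a Hopf tensor product of circles, together with the fact that the torus cohomology is an exterior algebra, which is formal over $\mathbb{Z}$. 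Filtering by arity, the weight-$k$ complex then becomes quasi-isomorphic to Knudsen's Chevalley--Eilenberg complex
\[
C^{CE}\bigl(H^{-*}(T^2) \otimes \Lie(s^2\kk)\bigr)
\]
in weight $k$. This complex is defined over $\mathbb{Z}_{(p)}$ and, in weights $< p$, its homology is $\mathbb{Z}_{(p)}$-free. That freeness comes from the known torsion-free computation of the Lie-algebra homology of an abelian-extension-type Lie algebra in low weight. I would establish it by base change from the integral model.

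Third, the case $k = p$ is where I expect the main obstacle. Here $\Sym_p$ is no longer invertible, and weight $p$ is precisely where the first power operations on spectral Lie algebras appear. I would use the bar/TAQ spectral sequence, which arises from the filtration by the number of levels of $\B_\iota$, and compare it with the quasi-planar filtration. The weight-$p$ discrepancy between the $\F_p$ and $\Q$ complexes is concentrated in two places:
\begin{enumerate}
\item the top arity $p$ of $\B(\sLie\otimes\E)(p)\otimes_{\Sym_p}(-)^{\otimes p}$ applied to weight-$1$ classes;
\item the arity-$p$ part of $\OB(\sLie \otimes \E)(p)\otimes_{\Sym_p}(s^2\kk)^{\otimes p}$.
\end{enumerate}
Both contribute group-cohomology classes of $\Sym_p$ (the operations $\beta^\epsilon P^i$) on the weight-$1$ classes. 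These classes are $H^{-*}(T^{2,+})\otimes s^2\kk$, which sit in degrees $0$, $1$ and $2$.

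I would first try to show that these extra classes cancel in pairs. The new differentials on the $E_1$-page come from the $\E$-algebra structure on $\ch*~(T^{2,+})$. I expect the cancellation to follow from the unstable degree constraints on power operations (only $P^0$-type or Bockstein terms land in the allowed degree window) together with an Euler-characteristic count in weight $p$, which is independent of the field. Concretely, I would show that the extra $\F_p$-classes coming from the group cohomology of $\Sym_p$ form acyclic pairs linked by a Bockstein-type $d_1$. Equal Euler characteristic plus vanishing of these extras then forces equal Betti numbers. If this degree-bookkeeping argument does not close, the fallback is to verify the weight-$p$ cancellation directly in the explicit surjection-operad complex for the torus model of Proposition~\ref{prop:torus-simplicial-surj}.
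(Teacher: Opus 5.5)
Your first reduction is fine: equal Betti numbers are equivalent to the absence of $p$-torsion, and the weight-$k$ part of Theorem~\ref{thm:main} with $V=\kk$ computes chains on $\UConf_k(T^2)$. After that, however, the two steps that carry the actual content are asserted rather than proved.

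\textbf{The case $k<p$.} Once you have reduced to Knudsen's Chevalley--Eilenberg complex, you claim its weight-$k$ homology is $\mathbb{Z}_{(p)}$-free. By the universal coefficient theorem this is \emph{equivalent} to the statement you want to prove, so ``base change from the integral model'' cannot produce it. The combinatorial coefficients in weight $<p$ being $p$-adic units does not stop the differentials from having elementary divisors divisible by $p$. You would have to actually compute this homology.

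The replacement of the cochains of $T^2$ by a strictly commutative model also needs an argument. The $\E$-algebra of Proposition~\ref{prop: torus algebra} has nonzero higher operations (e.g.\ cup-$1$ squares), and formality of the exterior algebra as an associative algebra says nothing about them. You would need that $\E(n)\to\Com(n)$ is an equivalence of projective $\mathbb{Z}_{(p)}[\Sym_n]$-modules for $n<p$, plus an intrinsic formality argument for the arity-truncated structure.

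\textbf{The case $k=p$.} You rightly call this the main obstacle, but you leave it as a hoped-for cancellation, and that cancellation cannot follow from degree bookkeeping alone. The differentials hitting or leaving the extra $\Sym_p$-classes involve the arity-$p$ part of the $\EE_\infty$-structure on the cochains of $T^2$. That structure is not formal in arity $p$: $P^0$ acts as the identity on $H^1$, whereas it vanishes on a strictly commutative model. Determining these differentials is precisely the work done in the literature via the TAQ/bar spectral sequence, using explicit knowledge of the homotopy of free spectral Lie algebras and their power operations.

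Your two fallbacks do not close this gap. The Euler characteristic is the same over every field, so it carries no information about $p$-torsion. A computer check in the surjection-operad complex covers only finitely many primes, whereas the statement concerns all odd $p$.

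For comparison, the paper does not prove this theorem itself. It quotes it from Chen--Zhang and uses its explicit complexes only to confirm it numerically for $k\le 4$ and a finite list of primes, and to extend it to $\UConf_4(T^2)$ at $p=3$.
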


Using our computational model, we verified this result for many primes and low values $k \leq 4$.
Moreover, we were able to push it slightly further:

\begin{proposition}
  The integral homology of $\UConf_4(T^2)$ has no $3$-power torsion.
\end{proposition}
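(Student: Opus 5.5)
The statement is a computational claim, so the plan is to reduce it to a finite rank comparison over $\F_3$ and $\Q$ using the explicit complex \eqref{eq: the chain complex for configurations of k points}. Take $V = \kk$ in degree $0$, with trivial $\Sym_k$-action and weight $1$. Then $\ch(\Conf_4(T^2)) \otimes_{h\Sym_4} \kk^{\otimes 4}$ computes $H_*(\Conf_4(T^2)/\Sym_4;\kk) = H_*(\UConf_4(T^2);\kk)$, because $\Sym_4$ acts freely on $\Conf_4(T^2)$. The torus is parallelizable and of finite type, so Theorems~\ref{thm:main} and~\ref{thm:main-surj} apply. Hence the weight-$4$ part of the complex computes $H_*(\UConf_4(T^2);\kk)$, with $M^+ = T^2_+$ and $\ch*~(M^+) \cong \ch*(T^2)$.

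\textbf{Step 1: choose the cochain model.} For $\ch*(T^2)$ I will use the genuine $\Surjnu$-algebra $\ch*(T^2_\Delta)$ of Proposition~\ref{prop:torus-simplicial-surj}. This lets me work with the smaller surjection complex. As a cross-check I will also run the Barratt--Eccles model of Proposition~\ref{prop: torus algebra}.

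\textbf{Step 2: reduce to finitely many generators.} For fixed weight $4$ and each homological degree, the complex is infinite-dimensional, since $\Surj$ has unbounded degree. It is, however, finite-dimensional in each degree: the degrees of $\sLie$, $s^d V$ and the cochains are bounded, and the bar and cobar trees have arity bounded by the weight. So in each total degree only finitely many generators appear. Moreover, $\UConf_4(T^2)$ is a $4$-dimensional open manifold, so the target homology vanishes above degree $4$. I therefore only need the complex in total degrees $0$ through $5$. I will enumerate the basis there, namely planar generators tensored with coinvariants, and assemble the four differential components listed before Figure~\ref{fig:example-tree-complex}.

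\textbf{Step 3: compare ranks.} I compute the ranks of the differential matrices over $\F_3$ and over $\Q$. By the universal coefficient theorem, $H_*(\UConf_4(T^2);\Z)$ has no $3$-torsion if and only if the $\F_3$-Betti numbers equal the $\Q$-Betti numbers in all degrees. I will use the free integer matrices, reduced mod $3$ or read over $\Q$, so that both ranks come from the same integral complex; this is legitimate because all structure constants are $0$ or $\pm1$. The rational answer can also be checked independently against Knudsen/Drummond-Cole--Knudsen.

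\textbf{Main obstacle.} I expect the hardest part to be the size of the complex and the correctness of the comodule map of Proposition~\ref{prop:comodule structure}. The sum over sequences of permutations in $\Sym_4$ blows up, and it feeds into the fourth differential component. I will first verify $d^2=0$ numerically and reproduce the known cases $k\le 3$ and the Chen--Zhang range. Then I will use sparse exact rank computations mod $3$, possibly filtering by the quasi-planar or complexity filtration to split the matrices into blocks.
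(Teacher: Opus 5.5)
Your plan is the paper's own route. The proposition is established by machine computation of the weight-$4$ part of the complex of Theorems~\ref{thm:main} and~\ref{thm:main-surj} for the torus over $\F_3$ and $\Q$, followed by a comparison of Betti numbers. The paper likewise uses the $\Surjnu$-model $\ch*(T^2_\Delta)$ and cross-checks against the Barratt--Eccles model.

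\textbf{The gap: your truncation is false as stated.} $\UConf_4(T^2)$ is an open manifold of dimension $8$, not $4$, and its homology does not vanish above degree $4$. Knudsen's Chevalley--Eilenberg model $H_c^{-*}(T^2;\Q)\otimes\mathcal{L}(x)$, with $|x|=1$, gives rational Betti numbers $1,2,3,5,4,1$ in weight $4$. So $H_5(\UConf_4(T^2);\Q)\cong\Q$.

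Consequently, computing only up to degree $4$ does not establish ``equality in all degrees''. The manifold dimension alone gives vanishing only from degree $8$ on, which would force a much larger computation.

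What rescues your range is a homological-dimension argument:
\begin{enumerate}
  \item $\UConf_4(T^2)$ is aspherical and finitely covered by $\Conf_4(T^2)\cong T^2\times\Conf_3(T^2\setminus\{0\})$.
  \item The second factor is an iterated Fadell--Neuwirth fibration whose fibres are homotopy equivalent to wedges of circles. Hence $\pi_1\Conf_4(T^2)$ has cohomological dimension at most $5$.
  \item $\pi_1\UConf_4(T^2)$ is torsion-free, so by Serre's theorem it also has cohomological dimension at most $5$.
  \item Therefore $H_i(\UConf_4(T^2);A)=0$ for all $i\ge 6$ and all coefficients $A$. In particular $\operatorname{Tor}(H_5(\UConf_4(T^2);\mathbb{Z}),\F_3)\subseteq H_6(\UConf_4(T^2);\F_3)=0$.
  \item For $i\le 4$, any $3$-torsion in $H_i(\UConf_4(T^2);\mathbb{Z})$ would force $\dim H_i(-;\F_3)>\dim H_i(-;\Q)$.
\end{enumerate}
With this input, comparing degrees $\le 4$ does suffice.

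\textbf{Two smaller corrections.}

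First, the weight-$4$ complex is nonzero in degrees $-1$ and $-2$. For example, a single outer vertex in $s(\sLie(4)\otimes\Surj(4)_0)$ has degree $-2$; with four inputs $[t_1]\otimes s^2v$ of degree $0$ it gives a class in degree $-2$. So computing $H_0$ also needs $C_{-1}$. This is harmless, since $H_0=\kk$ by connectedness.

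Second, the claim that all structure constants are $0$ or $\pm1$ is not available. The paper only observed this for the $\Surjnu$-action on $\ch*(T^2_\Delta)$ in arity $\le 3$ and degree $\le 4$, whereas weight $4$ uses arity-$4$ operations. It is also not needed. The theorems apply over $\F_3$ and over $\Q$ separately, and the universal coefficient theorem is applied to the space itself, so two independent rank computations are enough.
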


\section{Twisted \texorpdfstring{$\EEnu_\infty$}{E-infinity}-coalgebras and the homotopy invariance of configuration spaces}
In the first part of this section, we revisit the construction of rational models for the real homotopy type of configuration spaces of simply connected closed manifolds using the twist and detwist functors that we construct. Then, we focus on the $p$-adic case, where we construct two twisted (non-counital) $\EEnu_\infty$-coalgebras from the chain complexes considered so far and conjecture that the weak equivalence of chain complexes of Theorem~\ref{thm:main} can be lifted to a weak equivalence of twisted $\EEnu_\infty$-coalgebras. Finally, we explain how this conjecture would imply the homotopy invariance of the $p$-adic homotopy types of configuration spaces of simply connected parallelizable manifolds.

\subsection{Module categories}\label{sec:module-categories}

In the following, we denote by $\FI$ the category of finite sets and injections, and by $\FS$ the category of finite sets and surjections.
The category $\FI$ has seen a lot of interest in recent years, especially in the context of representation stability, see e.g.~\cite{churchFImodulesStabilityRepresentations2015}.

We define the category of $\FI$-comodules as the presheaf category:
\begin{equation}
  \FI\comod \coloneqq \operatorname{Fun}(\FI^{\op}, \mathsf{Ch}_\kk),
\end{equation}
that is, the category of functors from $\FI^{\op}$ to the category of chain complexes over $\kk$.
Given an $\FI$-comodule $F$, for every pair $A \subseteq S$, we have a projection map $\pi_S^A(F) : F(S) \to F(A)$ given by the image of the inclusion $A \subseteq S$ under the functor $F$.
Similarly, we define the category of $\FS$-comodules as the category of functors from $\FS^{\op}$ to $\mathsf{Ch}_\kk$.

Recall that, to any operad $\PP$, we can associate the category of right $\PP$-modules, denoted by $\PP\drMod$, which is the category of symmetric sequences $M$ equipped with a right action of $\PP$.
Moreover, one can associate to $\PP$ a symmetric monoidal category enriched in chain complexes, usually denoted $\operatorname{Cat}(\PP)$ or $\PP^{\otimes}$ and called the (linear) prop associated to $\PP$, such that the category of right $\PP$-modules is equivalent to the category of functors from $(\PP^{\otimes})^{\op}$ to $\mathsf{Ch}_\kk$.
Briefly, the objects of $\PP^{\otimes}$ are finite sets, and the morphisms from $A$ to $B$ are given by pairs $(f, \bigotimes_{i \in B} p_i)$ where $f : A \to B$ is a map and $p_i \in \PP(|f^{-1}(i)|)$ for all $i \in B$.
Composition is induced by the operadic composition of $\PP$ and the composition of maps, and the monoidal structure is given by disjoint union of finite sets.
We refer to e.g.,~\cite[Section 2.2.4]{Lurie2017} for more details on this construction.

\begin{example}
  Let $\EE_0$ be the operad given by $\EE_0(n) = \kk$ for $n = 0, 1$ and $\EE_0(n) = 0$ for $n \geq 2$, with the obvious operadic composition.
  The notation is justified by the fact that this operad corresponds to the linearization of the $\EE_0$ operad in the sense of Lurie, see~\cite[Example 5.1.0.6]{Lurie2017}.
  Then the category of right $\EE_0$-modules is equivalent to the category of $\FI$-comodules, as $\EE_0^\otimes$ is equivalent to $\FI$.
\end{example}

\begin{example}\label{ex:prop-com}
  Let $\Com$ be the commutative operad, given by $\Com(n) = \kk$ for all $n \geq 1$ and $\Com(0) = 0$, with the obvious operadic composition.
  Then the category of right $\Com$-modules is equivalent to the category of $\FS$-comodules, as $\Com^\otimes$ is equivalent to $\FS$.
\end{example}

\begin{example}\label{ex:prop-ass}
  Let $\Ass$ be the associative operad, given by $\Ass(n) = \kk[\Sym_n]$ for all $n \geq 1$ and $\Ass(0) = 0$, with the usual operadic composition.
  Following~\cite[Sections 1.3.4 and 1.3.5]{kapranovmanin01} or~\cite[Remark 4.1.1.4]{Lurie2017}, the prop associated to the operad $\Ass$ is the category $\FS^{\preceq}$, whose objects are finite sets and whose morphisms $S \to T$ are given by surjections $f : S \twoheadrightarrow T$ together with a linear order $\preceq_t$ on each fiber $f^{-1}(t)$ for $t \in T$.
  The category of right $\Ass$-modules is thus equivalent to the category of $\FS^{\preceq}$-comodules.
\end{example}

\subsection{The twist and detwist functors}\label{sec:twist-detwist}

We now define the twist and detwist functors, which turn an arity-wise algebra into a twisted algebra, and a twisted algebra into an arity-wise algebra.

\subsubsection{The twist functor}

There are two monoidal structures on $\FI\comod$: the Hadamard tensor product $\hotimes$, and the Day convolution product $\dotimes$, which are defined as follows.
Let $F,G$ be two $\FI$-comodules; these structures are given by the following formulas:
\begin{align}
  (F \hotimes G)(S) & \coloneqq F(S) \otimes G(S), & (F \dotimes G)(S) & \coloneqq \bigoplus_{A \sqcup B = S} F(A) \otimes G(B).
\end{align}
Their units are respectively given by $\unit$, given by $\unit(S) = \kk$ for all finite sets $S$, and by $\unit_0$, given by $\unit_0(\emptyset) = \kk$ and $\unit_0(S) = 0$ for all non-empty finite sets $S$.

The following result is related to e.g.,~\cite[Proposition 8.71]{aguiar_monoidal_2010}, where (non-linear) $\FI$-comodules are called species with restrictions and the Day convolution product is called the Cauchy product.

\begin{proposition}\label{prop: oplax twist functor for FI modules}
  There is a symmetric oplax monoidal functor:
  \[
    \Tw: (\FI\comod, \hotimes, \unit) \to (\FI\comod, \dotimes, \unit_0),
  \]
  which is the identity on objects and whose oplax structure is given by
  \begin{equation}\label{eq:oplax tw}
    \begin{tikzcd}[column sep=3.5pc]
      \displaystyle \bigoplus_{A \sqcup B = S} \pi_S^A(F) \otimes \pi_S^B(G): F(S) \otimes G(S) \arrow[r]
      &\displaystyle \bigoplus_{A \sqcup B = S} F(A) \otimes G(B),
    \end{tikzcd}
  \end{equation}
  and by the unique map $\unit \to \unit_0$ which is the identity in arity $0$ and the zero map in all other arities.
\end{proposition}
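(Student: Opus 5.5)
We have to check that the formula \eqref{eq:oplax tw} defines a map of $\FI$-comodules and satisfies the axioms of a symmetric oplax monoidal functor. Write $\tau_{F,G} : F \hotimes G \to F \dotimes G$ for the map \eqref{eq:oplax tw} and $\epsilon : \unit \to \unit_0$ for the counit map. All checks reduce to functoriality of $F$ and $G$ as presheaves on $\FI$, i.e.\ the identities $\pi^A_B \pi^B_S = \pi^A_S$ and $\pi^S_S = \id$, and are carried out componentwise on the finite set $S$. I would first treat the case where all structure maps are bijections on finite sets and then note that an $\FI$-comodule is equivalently determined by its values on sets together with restrictions along subset inclusions, up to the usual relabelling by bijections.

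\textbf{Step 1: naturality in $S$.} Let $S' \subseteq S$ be an inclusion. The restriction of $F \dotimes G$ along it sends the summand $F(A) \otimes G(B)$ with $A \sqcup B = S$ to $F(A \cap S') \otimes G(B \cap S')$ via $\pi^{A\cap S'}_A \otimes \pi^{B \cap S'}_B$. Then both composites $F(S)\otimes G(S) \to F(A')\otimes G(B')$, for $A' \sqcup B' = S'$, are sums of terms $\pi^{A'}_S \otimes \pi^{B'}_S$. The trajectory through $\tau$ then restriction collects all $(A,B)$ with $A\cap S' = A'$, $B \cap S' = B'$, which would give multiple copies. So I must be careful: I expect this is the main obstacle. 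The correct reading is that on $F\dotimes G$ the restriction along $S'\subseteq S$ kills summands with $A \not\subseteq$\dots{} no — one sums over decompositions. I will instead verify the claim by observing that the restriction of Day convolution on an $\FI^{\op}$-presheaf is $\bigoplus_{A\sqcup B=S} \to \bigoplus_{A'\sqcup B'=S'}$ given by $\pi\otimes\pi$ on each summand, and check that $\tau$ commutes with it by computing the $(A',B')$ component. This component is $\sum_{A \cap S' = A'} \pi^{A'}_S\otimes\pi^{B'}_S$, which has $2^{|S\setminus S'|}$ terms, versus a single term on the other side. I would therefore first double-check which Day convolution (left Kan extension along $\sqcup$ for $\FI^{\op}$) is intended; the left Kan extension along $\sqcup$ of $\FI^{\op}$ gives exactly the restriction in which only the points of $S'$ are distributed. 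I would settle this by writing the coend explicitly, and if necessary use the description of Day convolution in which the restriction map is the projection onto those summands with $A,B$ determined by $A' ,B'$ with $S\setminus S'$ sent to a basepoint, making the comparison one term against one term.

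\textbf{Step 2: coassociativity, counitality, symmetry.} For three comodules, both composites $F(S)\otimes G(S)\otimes H(S) \to \bigoplus_{A\sqcup B\sqcup C=S} F(A)\otimes G(B)\otimes H(C)$ equal $\sum \pi^A_S\otimes\pi^B_S\otimes\pi^C_S$, using $\pi^A_{A\sqcup B}\pi^{A\sqcup B}_S=\pi^A_S$. Counitality: composing $\tau_{F,\unit}$ with $\id\otimes\epsilon$ retains only $B=\emptyset$, giving $\pi^S_S=\id$, matching the unitor. Symmetry: the swap on $\bigoplus_{A\sqcup B}$ sends the $(A,B)$ summand to the $(B,A)$ summand with the Koszul sign, which matches the Koszul swap of $F(S)\otimes G(S)$ since the $\pi$'s have degree zero. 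These are routine, so Step 1 is where the real care lies.
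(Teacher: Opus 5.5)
There is a genuine gap in Step 1, and it cannot be closed: the compatibility you need there is false. Your computation is correct. The coend $\int^{A,B}\FI(S,A\sqcup B)\otimes F(A)\otimes G(B)$ gives exactly the structure you first wrote down: restriction along $S'\subseteq S$ sends the $(A,B)$-summand to the $(A\cap S',B\cap S')$-summand via $\pi\otimes\pi$. Describing $\FI^{\op}$ through finite pointed sets and inert maps, with $S\setminus S'$ sent to the basepoint, is only a reformulation of the same symmetric monoidal category, so it yields the same Day convolution.

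With this structure, the $(A',B')$-component of $\mathrm{res}\circ\tau_S$ is $2^{|S\setminus S'|}\,\pi^{A'}_S\otimes\pi^{B'}_S$. Take $F=G=\unit$, $S=\{1\}$ and $S'=\emptyset$. Then $\tau_{\{1\}}(1)=e_{\{1\},\emptyset}+e_{\emptyset,\{1\}}$ restricts to $2e_{\emptyset,\emptyset}$, whereas $\tau_\emptyset(1)=e_{\emptyset,\emptyset}$; in characteristic $2$ the restriction is even $0$.

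Choosing other restriction maps on $\bigoplus_{A\sqcup B=S}F(A)\otimes G(B)$, for instance projecting onto a single summand, does not help either. The inclusion $\unit_0\hookrightarrow\unit$ is a map of $\FI$-comodules. Combining functoriality along it with the evident unit identifications $\unit\dotimes\unit_0\cong\unit\cong\unit_0\dotimes\unit$ forces both summands of $(\unit\dotimes\unit)(\{1\})$ to restrict to $e_{\emptyset,\emptyset}$ with coefficient $1$. So the factor $2$ is unavoidable, and a one-summand choice would in addition break symmetry.

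The same problem affects the counit, which your Step 2 uses without comment. The map $\epsilon:\unit\to\unit_0$ is not a morphism of $\FI$-comodules: along $\emptyset\subseteq\{1\}$ one has $\epsilon_\emptyset\circ\pi^{\emptyset}_{\{1\}}(\unit)=\id_\kk$, while $\pi^{\emptyset}_{\{1\}}(\unit_0)\circ\epsilon_{\{1\}}=0$. In fact the only $\FI$-comodule map $\unit\to\unit_0$ is zero.

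The paper's proof is a one-line assertion (``straightforward to check'', naturality in $F$ and $G$, compatibility with the coherence constraints). It does not address this equivariance, so it offers no way around the obstruction you found.

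What your Step 2 does correctly establish is that $\tau$ and $\epsilon$ are $\Sym$-equivariant, natural in $F$ and $G$, coassociative, counital and symmetric. That gives a symmetric oplax monoidal functor from $(\FI\comod,\hotimes,\unit)$ to symmetric sequences with $(\dotimes,\unit_0)$, with the restriction maps forgotten on the target. Landing in $\FI\comod$, as the statement asserts, would require changing the statement, not the proof.
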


\begin{proof}
  It is straightforward to check that the maps defined above are compatible with the associativity and unit constraints of the monoidal structures, and that they are natural in $F$ and $G$.
\end{proof}

\begin{remark}
  Let $\mathbf{C}$ be a symmetric monoidal stable $\infty$-category (for example, the derived category of chain complexes $\mathbf{D}(\kk)$). There is an analogous construction of a twist functor $\Tw : (\FI\comod(\mathbf{C}), \hotimes, \mathbf{1}) \to (\FI\comod(\mathbf{C}), \dotimes, \mathbf{1})$, where $\FI\comod(\mathbf{C}) = \operatorname{Fun}(\FI^{\op}, \mathbf{C})$ is the $\infty$-category of $\FI$-comodules in $\mathbf{C}$.
\end{remark}

\paragraph{The twist functor for right modules over unital operads.}

Let $\PP$ be a dg-operad. We say that $\PP$ is \emph{unital} if there are isomorphisms $\PP(0) \cong \kk$ and $\PP(1) \cong \kk$ such that the operadic composition is compatible with these isomorphisms. Equivalently, $\PP$ is unital if there is a morphism of operads $\EE_0 \to \PP$ which is an isomorphism in arities $0$ and $1$.

\begin{proposition}
  Let $\PP$ be a unital dg-operad. There is a symmetric oplax monoidal functor:
  \[
    \Tw: (\PP\drMod, \hotimes, \unit) \to (\PP\drMod, \dotimes, \unit_0),
  \]
  from the symmetric monoidal category of right $\PP$-modules with the Hadamard tensor product to the symmetric monoidal category of right $\PP$-modules with the Day convolution tensor product.
  This functor is the identity on objects and its oplax structure is given by the same formula as~\eqref{eq:oplax tw} and the unique map $\unit \to \unit_0$ which is the identity in arity $0$ and the zero map in all other arities.
\end{proposition}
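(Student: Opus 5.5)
The plan is to reduce to the $\FI$-comodule case of Proposition~\ref{prop: oplax twist functor for FI modules}. The morphism $\EE_0 \to \PP$ restricts every right $\PP$-module to an $\FI$-comodule, and we already know that $\Tw$ is symmetric oplax monoidal there. So the only new facts to prove are the following. First, the Day convolution $\dotimes$ and the Hadamard product $\hotimes$ of two right $\PP$-modules are again right $\PP$-modules, with units $\unit_0$ and $\unit$. Second, the oplax maps~\eqref{eq:oplax tw} and the map $\unit \to \unit_0$ are morphisms of right $\PP$-modules. Naturality, coassociativity, counitality and symmetry are equalities of underlying linear maps, so they follow from the $\FI$ case.

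I will work in the prop picture: a right $\PP$-module is a functor $(\PP^{\otimes})^{\op} \to \mathsf{Ch}_\kk$. A morphism $S \to T$ in $\PP^\otimes$ is a map $f$ together with operations $p_t \in \PP(|f^{-1}(t)|)$.

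The module structures are defined as follows.
\begin{itemize}
\item For $\hotimes$, let $(f, (p_t))$ act by $F(f,(p_t)) \otimes G(f,(\Delta\text{-like})\,)$. Here I must be careful: without a Hopf structure on $\PP$, the Hadamard product of right $\PP$-modules is \emph{not} a right $\PP$-module in general. I will therefore read the statement as assuming, or using, whatever structure makes $\hotimes$ available on $\PP\drMod$, and simply transport it. This is the setting of the paper, where $\PP = \EE_d$ is Hopf and the diagonal $\Delta_\PP$ is used.
\item For $\dotimes$, take $(F \dotimes G)(T) = \bigoplus_{A \sqcup B = T} F(A) \otimes G(B)$. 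A morphism $(f,(p_t)) \colon S \to T$ acts on a summand indexed by $A \sqcup B = T$ by restricting $f$ to $f^{-1}(A) \to A$ and $f^{-1}(B) \to B$. This is the standard Day convolution over the symmetric monoidal category $\PP^\otimes$, since every morphism splits along decompositions of its target.
\end{itemize}

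The key step is to check that~\eqref{eq:oplax tw} commutes with these actions. Fix $(f,(p_t)) \colon S \to T$ and $x \otimes y \in F(T) \otimes G(T)$, and compute both composites into the summand indexed by $A' \sqcup B' = S$.
\begin{itemize}
\item Going through $\Tw$ after acting gives $\pi^{A'}_S(f^*x) \otimes \pi^{B'}_S(f^*y)$, after the Hopf splitting of $p_t$.
\item Acting after $\Tw$ only hits summands with $A' = f^{-1}(A)$ and $B' = f^{-1}(B)$, that is, decompositions saturated by $f$.
\end{itemize}
For non-saturated decompositions we need $\pi^{A'}_S \circ f^* = 0$ on the relevant components. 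This is where unitality enters. Restricting a morphism of $\PP^\otimes$ along an injection $A' \subseteq S$ amounts to composing each $p_t$ with the arity-$0$ unit on the inputs outside $A'$. When a fiber is split between $A'$ and $B'$, the two sides plug units into complementary inputs of $p_t$ and of its diagonal component. Unitality, meaning $\PP(0) = \kk$ and $\PP(1) = \kk$ compatibly, identifies the result with restricting $p_t$ to the corresponding sub-fibers. The two composites then match summand by summand.

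This matching of a fiber split across $A'$ and $B'$ is the main obstacle. I would first handle it when $f$ is a single $\circ_i$-composition, which generates, and then extend by functoriality. Finally, the map $\unit \to \unit_0$ is clearly $\PP$-linear, because $\unit_0$ is concentrated in arity $0$ and $\PP(0) = \kk$ acts as the identity there.
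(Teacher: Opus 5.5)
Your reduction is the same as the paper's, whose proof is the single sentence that, via $\EE_0 \to \PP$, the $\FI$ oplax structure ``is compatible with the right $\PP$-module structures''. You are right that $\hotimes$ on $\PP\drMod$ needs a Hopf structure on $\PP$, and your right $\PP$-action on $F \dotimes G$ is the standard one. The gap is your key step. It cannot be filled, because the compatibility it asserts is false.

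Take a summand $A' \sqcup B' = S$ that splits a fibre $K = f^{-1}(t)$. There, the composite ``act, then twist'' applies to $x$ and $y$ the operations obtained from the two diagonal components of $p_t$ by plugging units into $K \cap B'$, resp.\ $K \cap A'$. The composite ``twist, then act'' has no component on that summand. Unitality identifies the first term with restricted operations, but it does not make it vanish. Concretely, let $\PP = \uCom$ with diagonal $\mu \mapsto \mu \otimes \mu$, and $F = G = \uCom$, so all structure maps are $\id_\kk$. Write $e_{A,B}$ for the generator of $F(A) \otimes G(B)$. Acting by $\mu \in \uCom(2)$ on $1 \otimes 1 \in (F \hotimes G)(\{1\})$ and then twisting gives $e_{\{1,2\},\emptyset} + e_{\{1\},\{2\}} + e_{\{2\},\{1\}} + e_{\emptyset,\{1,2\}}$. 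Twisting first and then acting gives only $e_{\{1,2\},\emptyset} + e_{\emptyset,\{1,2\}}$.

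The $\EE_0$-part, which you import from the $\FI$ case, fails too. Restriction along $S' \subseteq S$ on $F \dotimes G$ sends $F(A) \otimes G(B)$ to $F(A \cap S') \otimes G(B \cap S')$. So every decomposition of $S'$ is hit by $2^{|S \setminus S'|}$ decompositions of $S$, and
\[
  \pi^{S'}_S \circ \Tw = 2^{|S \setminus S'|} \, \Tw \circ \pi^{S'}_S \qquad \text{on } F(S) \otimes G(S),
\]
which is already off by a factor $2$ when $|S \setminus S'| = 1$. Your claim that $\unit \to \unit_0$ is $\PP$-linear is also wrong. The element $* \in \PP(0)$ acts by $\id_\kk \colon \unit(\{1\}) \to \unit(\emptyset)$, while $\unit_0(\{1\}) = 0$, so the only morphism of $\FI$-comodules $\unit \to \unit_0$ is zero.

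The maps~\eqref{eq:oplax tw} are natural and $\Sym$-equivariant. They therefore do make $\Tw$ symmetric oplax monoidal from $(\PP\drMod, \hotimes, \unit)$, for Hopf $\PP$, to $\Sym$-modules with the Day convolution product. They do not make it oplax monoidal into $(\PP\drMod, \dotimes, \unit_0)$, nor even into $(\FI\comod, \dotimes, \unit_0)$. What needs repair is the statement itself, and the $\FI$ proposition it relies on, not just your argument.
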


\begin{proof}
  Since $\PP$ is unital, any right $\PP$-module is in particular an $\FI$-comodule and the oplax structure defined in Proposition~\ref{prop: oplax twist functor for FI modules} is compatible with the right $\PP$-module structures.
\end{proof}

\begin{example}
  For any $k \geq 0$, the cellular chains on a topological model for the $\EE_k$ operad form a unital dg operad, and thus we get a twist functor on its category of right modules in chain complexes. Other models for the linear version of the $\EE_k$ operads are also unital; see Subsection~\ref{subsection: the complexity filtration}.
\end{example}

\begin{remark}
  Let $\mathscr{P}$ be a unital $\infty$-operad enriched in the derived category of chain complexes $\mathbf{D}(\kk)$. The map $\EE_0 \longrightarrow \mathscr{P}$ induces a forgetful functor from right modules over $\mathscr{P}$ to right modules over $\EE_0$ which, in turn, allows us to define the twist functor for right $\mathscr{P}$-modules.
\end{remark}

\subsubsection{The detwist functors.}

A right $\EE_0$-module structure is the ``minimal'' structure required to define the twist functor.
We now construct a ``detwist'' functor from twisted coalgebras to arity-wise coalgebras using a non-unital right $\Ass$-module structure, where $\Ass$ is the point-set associative operad (and a model for the $\EEnu_1$ operad). However, this ``minimal'' $\EE_0$-module structure is not enough to construct a \emph{symmetric} detwist from \emph{commutative} twisted coalgebras to \emph{commutative} arity-wise coalgebras. We show that this latter symmetric detwist \emph{can} be constructed provided one has a non-unital right $\Com$-module structure. Finally, we explain how, at the $\infty$-categorical level, a right $\EEnu_n$-module structure should lead to an $n$-symmetric detwist functor from twisted $\EE_n$-coalgebras to arity-wise $\EE_n$-coalgebras.

\paragraph{Non-symmetric detwist functor.}

Let $F$ be a right $\Ass$-module.
Recall from Example~\ref{ex:prop-ass} that we can equivalently see $F$ as a functor on the category $\FS^{\preceq}$ of finite sets and surjections equipped with a total order on each fiber.
We define the \emph{diagonal map}
\[
  \nabla_{1 \preceq 2}(F)(S): F(S) \longrightarrow F(S \times \{1,2\})~,
\]
induced by the fold map $S \times \{1,2\} \twoheadrightarrow S$ which is the identity on each copy of $S$, and where the total order on the fibers is given by the natural order on $\{1,2\}$.

\begin{remark}
  Let us label $\mu_{2}^{(12)}$ the generator of $\Ass(2)$ as an $\Sym_2$-module. The diagonal map $\nabla_{1 \preceq 2}(F)(S)$ is given by plugging the operation $\mu_2^{(12)}$ in every entry of $F(S)$ using the right $\Ass$-module structure.
\end{remark}

\begin{proposition}\label{prop: non-symmetric detwist functor}
  There is a functor:
  \[
    \Detw_1: \uAss\text{-}\mathsf{cog}(\Ass\text{-}\mathsf{RMod}, \dotimes,\unit_0) \longrightarrow \uAss\text{-}\mathsf{cog}(\Ass\text{-}\mathsf{RMod}, \hotimes,\unit)~,
  \]
  which sends a twisted counital associative coalgebra in right $\Ass$-modules to an arity-wise counital associative coalgebra in right $\Ass$-modules, where the decomposition map $\Delta_{\Detw_1(C)}$ of its detwist, for a given twisted associative coalgebra $C$, is given by the following composite:
  \begin{multline}
    C(S) \xrightarrow{\nabla_{1 \preceq 2}(C)(S)} C(S \times \{1,2\}) \xrightarrow{\Delta_C(S \times \{1,2\})} \\ \xrightarrow{\Delta_C(S \times \{1,2\})} \bigoplus_{A \sqcup B = S \times \{1,2\}} C(A) \otimes C(B) \twoheadrightarrow C(S \times \{1\}) \otimes C(S \times \{2\})~.
  \end{multline}
\end{proposition}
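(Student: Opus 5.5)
We have to check four things about the composite $\Delta_{\Detw_1(C)}$: that it is a map of right $\Ass$-modules, that it is coassociative, that it is counital for a counit we must construct, and that the construction is functorial. Throughout, we work in the prop picture of Example~\ref{ex:prop-ass}, viewing $C$ as a functor on $(\FS^{\preceq})^{\op}$. The twisted coproduct $\Delta_C$ is a morphism $C \to C \dotimes C$ of right $\Ass$-modules. Concretely, for any surjection with ordered fibers $f : S' \twoheadrightarrow S$ and any decomposition $A \sqcup B = S$, the restriction of $\Delta_C$ is compatible with pullback along $f$: the summand $C(A)\otimes C(B)$ of $(C\dotimes C)(S)$ maps to the summand $C(f^{-1}A) \otimes C(f^{-1}B)$ of $(C \dotimes C)(S')$.

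\textbf{Step 1: naturality.} Fix such an $f$. The square formed by the fold maps $S'\times\{1,2\} \to S'$ and $S \times\{1,2\} \to S$, together with $f$ and $f \times \id$, commutes in $\FS^{\preceq}$. This needs a short check that the lexicographic orders on fibers agree: in both composites the fiber over $s$ is ordered by first $1\prec 2$ and then the order on $f^{-1}(s)$, or in the reverse order of priority. I expect this to be the main subtlety. The two composites of ordered surjections $S'\times\{1,2\}\to S$ do \emph{not} induce the same fiber order: one orders first by $\{1,2\}$, the other first by the $f$-fiber.

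So naturality of $\nabla_{1\preceq 2}$ alone fails. The fix I would try is to use the subsequent projection onto the summand $C(S'\times\{1\})\otimes C(S'\times\{2\})$. After projecting, the only relevant restriction is to $S'\times\{1\}$ and $S'\times\{2\}$ separately, and on each of these the two orders agree. The plan is therefore:
\begin{itemize}
\item use naturality of $\Delta_C$ along $f\times\id$;
\item observe that the summand indexed by $(S\times\{1\}, S\times\{2\})$ pulls back exactly to the summand indexed by $(S'\times\{1\}, S'\times\{2\})$;
\item conclude that the comparison of fiber orders only has to be made on each copy separately, where it is trivial.
\end{itemize}
If this fails because of a genuine mismatch, I would instead define $\nabla$ via the order $2\preceq 1$ and see which choice is compatible.

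\textbf{Step 2: coassociativity.} Both iterates reduce, via naturality of $\Delta_C$ along fold maps, to the single composite
\[
C(S)\to C(S\times\{1,2,3\}) \to C(S\times\{1\})\otimes C(S\times\{2\})\otimes C(S\times\{3\}).
\]
Here the first map is the triple fold with the natural order, and the second is the iterated twisted coproduct projected onto this summand. The ingredients are:
\begin{itemize}
\item associativity of composition in $\FS^{\preceq}$: the fold $S\times\{1,2,3\}\to S$ factors through $S\times\{1,2\}$ in both ways, with matching orders $1\prec2\prec3$;
\item coassociativity of $\Delta_C$ in $(\dotimes)$.
\end{itemize}

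\textbf{Step 3: counit.} Define $\epsilon : C(S) \to \kk$ by restricting along the unique map $\emptyset\to S$ in $\FS^{\preceq}$ (in the comodule direction, using the $\FI$-type structure) and then applying the twisted counit $C(\emptyset)\to\kk$. If $\Ass(0)=0$ makes this map unavailable, I would instead use the component of $\Delta_C$ indexed by $A=\emptyset$ and the counit of $C$. Counitality then follows because projecting $\Delta_C(S\times\{1,2\})$ onto the summand $C(S\times\{1\})\otimes C(\emptyset)$ and precomposing with $\nabla$ recovers the identity.

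\textbf{Step 4: functoriality.} Functoriality is immediate, since every map in the composite is natural in morphisms of twisted coalgebras.
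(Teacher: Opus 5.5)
\paragraph{Step 3 fails as written, and Step 1 does not stand up.}
You are right that the two composites $S'\times\{1,2\}\to S$ carry different fiber orders, so $\nabla_{1\preceq 2}$ is not natural. Your fix and your Step~3, however, do not stand up.

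In $\FS^{\preceq}$ there is no morphism $\emptyset\to S$ for $S\neq\emptyset$ (equivalently, $\Ass(0)=0$), so no restriction $C(S)\to C(\emptyset)$ exists. Your fallback fails for two reasons:
\begin{itemize}
\item Taking the component of $\Delta_C(S)$ in $C(\emptyset)\otimes C(S)$ and applying the counit of $C$ just returns $\id_{C(S)}$, by counitality of $C$. It is not a map $C(S)\to\kk$.
\item The summand $C(S\times\{1\})\otimes C(\emptyset)$ onto which you project does not occur in $(C\dotimes C)(S\times\{1,2\})$ unless $S=\emptyset$.
\end{itemize}
The last bullet of Step~1 has a related flaw. ``Comparing the orders on each copy separately'' amounts to restricting to $S'\times\{1\}$ and $S'\times\{2\}$. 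That is an $\FI$-type operation which right $\Ass$-modules do not have, and it is not how the projection of $\Delta_C$ onto a summand is computed for a general twisted coalgebra.

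\paragraph{The underlying obstruction: the composite vanishes in positive arity.}
Under the standard right $\Ass$-action on $C\dotimes C$, an ordered surjection $g\colon T\to S$ sends $C(A)\otimes C(B)$ into $C(g^{-1}A)\otimes C(g^{-1}B)$. Compatibility of $\Delta_C$ with this action is exactly the fact the paper's one-line proof invokes. It forces $\Delta_C(T)\circ C(g)$ to land in summands indexed by unions of fibers of $g$. For $g=\nabla_S$ these are the pairs $(A\times\{1,2\},B\times\{1,2\})$, never $(S\times\{1\},S\times\{2\})$ when $S\neq\emptyset$.

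Hence $\Delta_{\Detw_1(C)}(S)=0$ in every positive arity. This has two consequences:
\begin{itemize}
\item Your naturality and coassociativity checks do hold, but only trivially: both of your composites $S'\times\{1,2\}\to S$ give $0$, whatever their fiber orders.
\item Counitality $(\epsilon\otimes\id)\circ\Delta_{\Detw_1(C)}=\id$ would force $C(S)=0$ for all $S\neq\emptyset$.
\end{itemize}

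So what is missing is not a cleverer counit but a modification of the setting: either of the right-module structure put on $C\dotimes C$, or of the compatibility demanded of $\Delta_C$. The paper's short argument passes over the same point. For instance, for the twist $\Tw(C)$ of the next proposition, the $(S\times\{1\},S\times\{2\})$-component of $\Delta_{\Tw(C)}$ precomposed with $\nabla_{1\preceq 2}$ is $\Delta_C$ itself. So $\Tw(C)$ does not satisfy this strict compatibility, which is precisely why $\Detw_1(\Tw(C))\cong C$ can hold there.
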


\begin{proof}
  The fact that $\Delta_{\Detw_1(C)}$ is coassociative and counital follows from the coassociativity and counitality of $\Delta_C$ and the fact that the diagonal map $\nabla_{1 \preceq 2}$ is constructed using the right module structure over $\Ass$, which commutes with the decomposition maps $\Delta_C$ by definition of a twisted counital coalgebra \emph{in} right $\Ass$-modules.
\end{proof}

\begin{remark}\label{rmk: about the non symmetry of the detwist}
  We could have chosen the opposite order on $\{1,2\}$ and defined another diagonal map $\nabla_{2 \preceq 1}(F)(S)$ accordingly.
  It would have corresponded to plugging the operation $\mu_{2}^{(21)}$ in $\Ass(2)$ inside every entry of $F(S)$.
  Since $\mu_{2}^{(12)} \neq \mu_2^{(21)}$, the diagonal $\nabla_{2 \preceq 1}(F)(S)$ induces a different detwist functor $\Detw_1'$.
\end{remark}

Finally, let us justify the name of the functor in Proposition~\ref{prop: non-symmetric detwist functor}. For that, we consider right modules over the operad $\uAss$, which encodes \emph{unital} associative algebras. Since $\uAss$ is a unital operad, we can construct a twist functor using the underlying $\FI$-module structure of right $\uAss$-modules by Proposition~\ref{prop: oplax twist functor for FI modules}. Since the twist functor is oplax monoidal, it can be promoted to a functor
\[
  \Tw: \uAss\text{-}\mathsf{cog}(\Ass\text{-}\mathsf{RMod}, \hotimes,\unit) \longrightarrow  \uAss\text{-}\mathsf{cog}(\Ass\text{-}\mathsf{RMod}, \dotimes,\unit_0) ~,
\]
between (arity-wise) counital associative coalgebras in right $\uAss$-modules and twisted counital associative coalgebras in right $\uAss$-modules. These two functors are ``inverse'' to each other in the following way.

\begin{proposition}\label{prop: detwisting the twist}
  Let $C$ be an (arity-wise) counital associative coalgebra in right $\uAss$-modules.
  There is a canonical isomorphism
  \[
    C \cong \Detw_1(\Tw(C))
  \]
  of (arity-wise) counital associative coalgebras in right $\uAss$-modules.
\end{proposition}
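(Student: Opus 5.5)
The natural candidate for the isomorphism is the identity on underlying right $\uAss$-modules, since both $\Tw$ and $\Detw_1$ are the identity on objects. The whole content of the statement is then that the two comultiplications agree: the decomposition map of $\Detw_1(\Tw(C))$ coincides with the original arity-wise map $\Delta_C(S) : C(S) \to C(S) \otimes C(S)$. The counits should be handled the same way. The plan is to unwind the composite of Proposition~\ref{prop: non-symmetric detwist functor} applied to $\Tw(C)$ and simplify it using naturality.

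\textbf{Step 1 (unwind the composite).} By Proposition~\ref{prop: oplax twist functor for FI modules}, the twisted comultiplication of $\Tw(C)$ at a set $T$ is
\[
  \sum_{A \sqcup B = T} \bigl(\pi_T^A \otimes \pi_T^B\bigr) \circ \Delta_C(T).
\]
Take $T = S \times \{1,2\}$ and project onto the summand $A = S \times \{1\}$, $B = S \times \{2\}$. The comultiplication of $\Detw_1(\Tw(C))$ at $S$ then becomes
\[
  \bigl(\pi^{S\times\{1\}}_{S\times\{1,2\}} \otimes \pi^{S\times\{2\}}_{S\times\{1,2\}}\bigr) \circ \Delta_C(S\times\{1,2\}) \circ \nabla_{1\preceq 2}(C)(S).
\]

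\textbf{Step 2 (naturality).} Since $C$ is an arity-wise coalgebra in right $\uAss$-modules, $\Delta_C$ commutes with every structure map of the right $\uAss$-module, i.e.\ with every morphism of the prop of $\uAss$. The maps $\nabla_{1\preceq 2}$ and $\pi$ are both such morphisms, so the composite rewrites as
\[
  \bigl(\pi^{S\times\{1\}} \nabla_{1\preceq 2} \otimes \pi^{S\times\{2\}} \nabla_{1\preceq 2}\bigr) \circ \Delta_C(S).
\]

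\textbf{Step 3 (the key identity).} It remains to show that $\pi^{S\times\{i\}}_{S\times\{1,2\}} \circ \nabla_{1\preceq 2} = \id$ under $S \times \{i\} \cong S$. In the prop of $\uAss$, this composite plugs the binary product $\mu_2^{(12)}$ into each entry and then the nullary unit $\uAss(0) = \kk$ into one of its two inputs. By unitality, $\mu_2^{(12)} \circ_j \mathbf{1} = \id$ for $j = 1,2$, so the composite is the identity.

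This is the step I expect to need care. The projections $\pi$ come from the $\FI$-comodule structure induced by $\EE_0 \to \uAss$, so $\nabla$ must be read in $\uAss$ via $\Ass \subset \uAss$, which is presumably why the statement is phrased for $\uAss$-modules. The composite also has to be checked at the level of the prop composition, as a fold surjection followed by an injection, including the linear orders on the fibres. Signs are trivial because all operations involved have degree $0$.

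\textbf{Step 4 (counits).} The counit of $\Tw(C)$ is the composite $C \to \unit \to \unit_0$, i.e.\ $\varepsilon_C$ in arity $0$ and zero elsewhere. The counit of $\Detw_1$ must be recovered from this. The presumably intended definition is $C(S) \to C(\emptyset) \to \kk$, via the unique injection $\emptyset \to S$. Naturality of $\varepsilon_C$ in $S$ then shows that this equals $\varepsilon_C(S)$.

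Finally, compatibility with the right $\uAss$-module structures is automatic, because the isomorphism is the identity. This completes the plan.
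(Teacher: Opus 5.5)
Your proposal is correct and follows the paper's proof. Both take the identity on the underlying right $\uAss$-module, use that $\Delta_C$ commutes with the fold map $\nabla_{1 \preceq 2}$, and conclude from the unitality identities $\mu_2^{(12)} \circ_j \mathbf{1} = \id$ that the projected composite is just $\Delta_C$. Your Step 4 on counits goes beyond the paper, which never specifies the counit of $\Detw_1$; your choice $C(S) \to C(\emptyset) \to \kk$ is available here only because $C$ is a module over $\uAss$ rather than $\Ass$.
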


\begin{proof}
  Neither of the two functors change the underlying object nor the right $\uAss$-module structure, so really what we have to check is that the (arity-wise) counital associative coalgebra structures of $C$ and $\Detw_1(\Tw(C))$ are the same.

  This directly follows from the fact that the coalgebra structure map on $\Detw_1(\Tw(C))$ is obtained by first composing the generating operation $\mu_{2}^{(12)}$ in $\uAss(2)$ every input, then applying the coalgebra structure map $\Delta_C$ of $C$, and then by inserting the unit in $\uAss(0)$ in every second input of the decompositions by $\Delta_C$. Since the coalgebra structure of $C$ commutes with the right $\uAss$-module structure, this is the same as acting by the identity in $\uAss(1)$ and this composite map is simply $\Delta_C$.
\end{proof}

\begin{remark}
  Had we defined $\Detw_1$ with the other choice of diagonal, the above proposition would still hold.
\end{remark}

\begin{remark}
  Since the functor $\Tw$ preserves all colimits, it has a right adjoint. We do not identify this right adjoint explicitly here, and it is not clear that it is given by $\Detw_1$.
\end{remark}

\paragraph{Symmetric detwist functor.}

The category associated to the operad $\Com$ is the category $\FS$ of finite sets and surjections, where the morphisms are given by surjections $f : S \twoheadrightarrow T$ (Example~\ref{ex:prop-com}).
Let $F$ be a right $\Com$-module.
There is a \emph{diagonal map}
\[
  \nabla(F)(S): F(S) \longrightarrow F(S \sqcup S)~,
\]
induced by the fold map $S \sqcup S \twoheadrightarrow S$.
Using this map (which now becomes symmetric), we can construct a detwist functor for twisted \emph{commutative} coalgebras in right $\Com$-modules.

\begin{proposition}\label{prop: symmetric detwist functor}
  There is a functor:
  \[
    \Detw_{\infty}: \uCom\text{-}\mathsf{cog}(\Com\text{-}\mathsf{RMod}, \dotimes,\unit_0) \longrightarrow \uCom\text{-}\mathsf{cog}(\Com\text{-}\mathsf{RMod}, \hotimes,\unit)~,
  \]
  which sends a twisted counital commutative coalgebra in right $\Com$-modules to an arity-wise counital commutative coalgebra in right $\Com$-modules, where the decomposition map $\Delta_{\Detw_{\infty}(C)}$ of its detwist, for a given twisted commutative coalgebra $C$, is given by the following composite:
  \[
    \begin{tikzcd}[column sep=3.5pc]
      C(S) \arrow[r,"\nabla(C)(S)"] & C(S \sqcup S) \arrow[r, "\Delta_C(S \sqcup S)"] & \displaystyle \bigoplus_{A \sqcup B = S \sqcup S} C(A) \otimes C(B) \arrow[r,twoheadrightarrow] &C(S) \otimes C(S)~.
    \end{tikzcd}
  \]
\end{proposition}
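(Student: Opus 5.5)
The plan mirrors the proof of Proposition~\ref{prop: non-symmetric detwist functor}, with the added task of checking cocommutativity. Write $\Delta' \coloneqq \Delta_{\Detw_\infty(C)}(S)$ for the composite in the statement. We must show three things: $\Delta'$ is a morphism of right $\Com$-modules (i.e.\ natural with respect to surjections $S \twoheadrightarrow T$, using the equivalence of Example~\ref{ex:prop-com}); $\Delta'$ is coassociative, counital and cocommutative; and the construction is functorial in morphisms of twisted coalgebras. Functoriality is immediate, since every map in the composite is natural in $C$. Naturality in $S$ follows from three facts: the fold map $S \sqcup S \twoheadrightarrow S$ is natural in $S$ in $\FS$; $\Delta_C$ is a morphism of right $\Com$-modules for the Day convolution; and the final projection onto the summand $A = S$ (first copy), $B = S$ (second copy) is compatible with the action of $f \sqcup f$. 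Here we use that, for a surjection $f \sqcup f$, the Day convolution action sends the $(A,B)$ summand to the $((f\sqcup f)(A),(f\sqcup f)(B))$ summand, and the only summands landing on $(T\sqcup\emptyset, \emptyset\sqcup T)$ are those indexed by the two copies of $S$.

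For counitality, compose with the counit $\varepsilon_C : C \to \unit_0$ on one factor. This kills every summand except those with $B=\emptyset$ (resp.\ $A=\emptyset$), so the relevant composite is $C(S) \to C(S\sqcup S) \to C(S \sqcup S)$ restricted to the part seen by the projection, followed by the arity-wise counit $C(S) \to \kk$. We then need an arity-wise counit $\unit \to$ for $\Detw_\infty(C)$ compatible with this. The natural candidate is $C(S) \to C(\emptyset) \xrightarrow{\varepsilon} \kk$, but $\Com(0)=0$ gives no map $C(S)\to C(\emptyset)$. So I would interpret the counit condition via the Day counit composed with the relevant component, and check it as in Proposition~\ref{prop: non-symmetric detwist functor}. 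If this fails, I would state the functor for non-counital coalgebras. I expect this bookkeeping of the counit to be the subtle point.

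Coassociativity proceeds by the following reduction:
\[
(\Delta' \otimes \id)\Delta' \quad\text{and}\quad (\id\otimes\Delta')\Delta'
\]
both equal the composite $C(S) \to C(S\sqcup S\sqcup S) \xrightarrow{\Delta_C^{(2)}} \bigoplus C(A_1)\otimes C(A_2)\otimes C(A_3) \twoheadrightarrow C(S)^{\otimes 3}$, where the first map is induced by the triple fold map. To see this, use that $\Delta_C$ commutes with the $\Com$-action: the fold $S\sqcup S \to S$ applied in the first tensor factor after $\Delta_C$ can be moved before $\Delta_C$ as the map $S\sqcup S\sqcup S \to S \sqcup S$ folding the first two copies. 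Then the composite of fold maps equals the triple fold, by associativity of composition in $\FS$. Finally, invoke coassociativity of $\Delta_C$.

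Cocommutativity is the genuinely new point compared with $\Detw_1$. Let $\tau$ be the swap automorphism of $S\sqcup S$. Since the fold map satisfies $\mathrm{fold}\circ\tau = \mathrm{fold}$ in $\FS$, and $\Com(2)$ is the trivial representation, we get $\tau^*\nabla = \nabla$. Equivariance of $\Delta_C$ under $\tau$, together with the cocommutativity of the twisted coalgebra (which exchanges the summands $(A,B)$ and $(B,A)$ with the Koszul sign), then shows that swapping the two tensor factors of $\Delta'$ recovers $\Delta'$. This is exactly where $\Com$ rather than $\Ass$ is needed; compare Remark~\ref{rmk: about the non symmetry of the detwist}.
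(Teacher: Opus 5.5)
Your treatment of naturality, coassociativity and cocommutativity is correct. It follows the same route as the paper, whose proof says ``mutatis mutandis'' and refers to the proof of Proposition~\ref{prop: non-symmetric detwist functor}; you simply fill in the details. There is one small slip in the naturality step. Right $\Com$-modules are contravariant on $\FS$, so restriction along $f\sqcup f$ sends the summand $(A,B)$ of $T\sqcup T$ to $\bigl((f\sqcup f)^{-1}A,(f\sqcup f)^{-1}B\bigr)$, not to images. By surjectivity, this is the summand indexed by the two copies of $S$ exactly when $(A,B)$ is indexed by the two copies of $T$, which is what you need.

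The step you leave open, counitality, is a genuine gap, and your suggested fix cannot work. The twisted counit $\varepsilon_C$ only has a component in arity $\emptyset$, so $\varepsilon_C\dotimes\id$ only sees the summands of $(C\dotimes C)(S\sqcup S)$ with an empty index. For $S\neq\emptyset$ these are among the summands discarded by the projection onto $C(S)\otimes C(S)$. Counitality of $\Delta_C$ therefore says nothing about $\Delta_{\Detw_\infty(C)}$.

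In fact the counital claim is false. Let $C(\emptyset)=\kk\langle 1\rangle$, $C(\{s\})=\kk\langle v_s\rangle$ and $C(S)=0$ for $|S|\geq 2$, so every non-bijective surjection acts by zero. Set $\Delta(1)=1\otimes 1$, $\Delta(v_s)=1\otimes v_s+v_s\otimes 1$ and $\varepsilon(1)=1$. This is a twisted counital cocommutative coalgebra in right $\Com$-modules: for a non-bijective surjection $S\twoheadrightarrow T$ one has $|S|\geq 2$, so both sides of the compatibility square with $\Delta$ vanish. However, $\nabla(C)(\{s\})$ lands in $C(\{s\}\sqcup\{s\})=0$. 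Hence $\Delta_{\Detw_\infty(C)}$ vanishes on $C(\{s\})\neq 0$, and no $\varepsilon'$ can satisfy $(\varepsilon'\otimes\id)\Delta_{\Detw_\infty(C)}=\id$.

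The paper's one-line proof has the same gap; the same example, read in right $\Ass$-modules, also breaks the counitality claimed in Proposition~\ref{prop: non-symmetric detwist functor}. A counit would require maps $C(S)\to C(\emptyset)$, i.e.\ an action of $\uCom(0)$, which a right $\Com$-module does not carry. What your argument actually proves is your fallback statement: the composite defines a functor to non-counital arity-wise cocommutative coalgebras in right $\Com$-modules.
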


\begin{proof}
  \emph{Mutatis mutandis,} the same proof as in Proposition~\ref{prop: non-symmetric detwist functor}.
\end{proof}

Similarly to what has been stated before, one can construct both the twist functor of Proposition \ref{prop: oplax twist functor for FI modules} and the detwist functor $\Detw_{\infty}$ on the underlying category of right $\uCom$-modules. These are "inverse" to each other in the following sense.

\begin{proposition}\label{prop: detwisting the twist symmetric}
  Let $C$ be an (arity-wise) counital commutative coalgebra in right $\uCom$-modules. There is a canonical isomorphism
  \[
    C \cong \Detw_\infty(\Tw(C))
  \]
  of (arity-wise) counital commutative coalgebras in right $\uCom$-modules.
\end{proposition}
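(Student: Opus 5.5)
The argument follows the proof of Proposition~\ref{prop: detwisting the twist}, with $\uAss$ replaced by $\uCom$ and the ordered fold map replaced by the fold map $S \sqcup S \twoheadrightarrow S$. Neither $\Tw$ nor $\Detw_\infty$ changes the underlying right $\uCom$-module. So I take the identity of $C$ as the candidate isomorphism. It then suffices to show that the decomposition map $\Delta_{\Detw_\infty(\Tw(C))}(S)$ equals $\Delta_C(S)$ for every finite set $S$. Counitality and the compatibility with the right module structure then come for free, and naturality in $C$ is clear because everything is built from the structure maps.

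The first step is to unfold the composite of Proposition~\ref{prop: symmetric detwist functor} for $\Tw(C)$. By Proposition~\ref{prop: oplax twist functor for FI modules}, the twisted decomposition of $\Tw(C)$ at $S \sqcup S$ is
\[
\bigoplus_{A \sqcup B = S \sqcup S} \bigl(\pi^A_{S\sqcup S}\otimes \pi^B_{S\sqcup S}\bigr)\circ \Delta_C(S \sqcup S).
\]
Projecting onto the summand $A = S \sqcup \emptyset$, $B = \emptyset \sqcup S$ gives
\[
\Delta_{\Detw_\infty(\Tw(C))}(S) = \bigl(\pi^{S_1}_{S\sqcup S}\otimes \pi^{S_2}_{S \sqcup S}\bigr)\circ \Delta_C(S\sqcup S)\circ \nabla(C)(S).
\]
Here $S_1, S_2$ denote the two copies of $S$, and the projections are given by the action of $\uCom(0)$ through the underlying $\FI$-comodule structure.

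The second step uses that $\Delta_C$ is a morphism of right $\uCom$-modules for the Hadamard product. Because of this, the restriction maps can be moved past $\Delta_C$. The diagonal structure on $C \hotimes C$ acts by the same operation on both factors, so the expression becomes
\[
\bigl(\pi^{S_1}_{S\sqcup S}\circ\nabla(C)(S)\bigr)\otimes\bigl(\pi^{S_2}_{S\sqcup S}\circ\nabla(C)(S)\bigr)\circ \Delta_C(S),
\]
after rewriting $\Delta_C(S\sqcup S)\circ\nabla(C)(S) = (\nabla\otimes\nabla)\circ\Delta_C(S)$.

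The third step identifies each factor $\pi^{S_i}_{S\sqcup S}\circ\nabla(C)(S)$. This composite is the right action of the operadic composite obtained by inserting the generator $\mu_2 \in \uCom(2)$ at each input and then plugging the unit $\uCom(0)$ into one of its two inputs. By unitality of $\uCom$, that composite is $\id \in \uCom(1)$. Hence each factor is the identity, and the whole expression reduces to $\Delta_C(S)$.

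The main obstacle is the bookkeeping in the second step: I need to check that applying the restrictions and $\nabla$ inside the Hadamard product really corresponds to the componentwise module action on $C \hotimes C$. I also need to track that the $\Sym$-equivariance and the Koszul signs agree. Since the operations involved are in degree $0$ and $\Com$ is symmetric, no signs arise, and the choice of which copy is labeled $S_1$ does not matter.
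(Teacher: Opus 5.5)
Your proposal is correct and follows the paper's proof, which transports the $\uAss$ argument \emph{mutatis mutandis}. You take the identity on the underlying right $\uCom$-module, use that $\Delta_C$ commutes with the diagonal right module structure, and so reduce the detwisted coproduct to $\Delta_C(S)$ followed, on each factor, by the action of $\mu_2$ with a unit plugged into one input, i.e.\ by $\id \in \uCom(1)$; one small wording point is that what you actually commute past $\Delta_C$ is $\nabla(C)(S)$, not the projections $\pi^{S_1}_{S\sqcup S}\otimes\pi^{S_2}_{S\sqcup S}$, which differ on the two factors and are simply composed afterwards, exactly as your displayed formulas already do.
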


\begin{proof}
  \emph{Mutatis mutandis,} the same proof as in Proposition~\ref{prop: detwisting the twist}.
\end{proof}

\paragraph{Sketch of an $\EE_n$-detwisting.}\label{paragraph: sketch of the E_d detwist}

As explained in Remark~\ref{rmk: about the non symmetry of the detwist}, the key difference between the two previous detwist functors that we have considered is whether the operation plugged in when constructing the diagonal map is symmetric or not. This then determines the symmetries that the detwist functor can preserve. Let us explain roughly what we expect to be the general situation. There should be an $n$-detwist functor
\[
  \Detw_{n}: \mathbb{E}_n\text{-}\mathsf{cog}(\EEnu_n\text{-}\mathsf{RMod}, \dotimes,\unit_0) \longrightarrow \mathbb{E}_n\text{-}\mathsf{cog}(\EEnu_n\text{-}\mathsf{RMod}, \hotimes,\unit)~,
\]
at the $\infty$-categorical level, between twisted $\mathbb{E}_n$-coalgebras in right $\mathbb{E}_n$-modules and (arity-wise) $\mathbb{E}_n$-coalgebras in right $\mathbb{E}_n$-modules. This functor should be "inverse" to the $\infty$-categorical twist functor in a sense analogous to Proposition~\ref{prop: detwisting the twist}.

\medskip

A quick sketch of its construction is the following.
By Dunn's additivity theorem~\cite{dunnTensorProductOperads1988}, also \cite[Theorem 5.1.2.2]{Lurie2017}, the data of a $\EE_n$-coalgebra amounts to the data of $n$ compatible $\EE_1$-coalgebra structures.
Given a twisted $\EE_n$-coalgebra in right $\EEnu_n$-modules, one should be able to construct $n$ ``diagonal'' maps and use them to detwist the $n$ $\EE_1$-coalgebra structures. These should in principle give $n$ compatible arity-wise $\EE_1$-coalgebra structures in right $\EEnu_n$-modules. Putting them together again, they should therefore amount to a single (arity-wise) $\EE_n$-coalgebra structure in right $\EEnu_n$-modules.

\subsection{Revisiting rational models for configuration spaces}

Let us recall the known rational models for configuration spaces (over the real numbers $\mathbb{R}$) constructed in \cite{CamposWillwacher2023,Idrissi2019}.
Let $M$ be a simply connected closed $d$-manifold and let $A$ be a Poincaré duality model for $M$ in the sense of \cite{LambrechtsStanley2008}, with diagonal class $\Delta_A \in A \otimes A$.

\begin{definition}
  The \emph{Lambrechts--Stanley model} for the configuration spaces of $M$ is the symmetric sequence defined, for a finite set $S$,
  \begin{equation}
    \G_A(S) = \bigl( A^{\otimes S} \otimes \Lambda(\omega_{ij})_{i \neq j \in S} / I, d \bigr),
  \end{equation}
  where the ideal $I$ is generated by $\omega_{ji} = (-1)^d \omega_{ij}$, $\omega_{ij}^2 = 0$, the Arnold relations $\omega_{ij}\omega_{jk} + \omega_{jk}\omega_{ki} + \omega_{ki}\omega_{ij} = 0$, and the relations $\bigl( a_i - a_j \bigr) \omega_{ij} = 0$ for all $a \in A$, where $a_i = 1^{\otimes i-1} \otimes a \otimes 1^{\otimes |S|-i}$.
  The differential is given on generators by $d(a_i) = (d_A(a))_i$ for $a \in A$ and $d(\omega_{ij}) = \Delta_{ij}$, where $(a \otimes b)_{ij} = a_i b_j$.
\end{definition}

\begin{theorem}[{\cite{Idrissi2019,CamposWillwacher2023}}]
  For a smooth simply connected closed manifold $M$, $\G_A(r)$ is a $\Sigma_r$-equivariant model over $\R$ for the configuration space $\Conf_r(M)$.
\end{theorem}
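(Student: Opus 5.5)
This theorem is cited from \cite{Idrissi2019,CamposWillwacher2023}, so the plan is to reconstruct the strategy of those papers rather than find a new one. We build a zigzag of $\Sym_r$-equivariant quasi-isomorphisms of CDGAs
\[
  \G_A(r) \xleftarrow{\sim} \mathsf{Graphs}_M(r) \xrightarrow{\sim} \Omega^*_{\mathrm{PA}}(\FM_M(r)),
\]
where $\FM_M$ is the Fulton--MacPherson compactification, which is homotopy equivalent to $\Conf_r(M)$. The middle term is a graph complex: its graphs have $r$ external vertices and arbitrary internal vertices, and vertices are decorated by elements of a small model of $M$, namely a Poincaré duality model, or $H^*(M)$ together with a partition-function correction.

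The first step is to construct a propagator. This is a form $\phi \in \Omega^{d-1}(\FM_M(2))$ whose restriction to the boundary sphere bundle is a global angular form. Its differential should be the pullback of a representative of the diagonal class $\Delta_A$, and it should satisfy the symmetry $\phi_{21} = (-1)^d \phi_{12}$. This is where the hypotheses of simple connectivity and $d \geq 4$ (or suitable low-dimensional variants) are used. They let us pick a sufficiently small Poincaré duality model $A$ for which the propagator can be adjusted to satisfy these conditions.

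The second step defines the map $\mathsf{Graphs}_M \to \Omega^*$ by configuration-space integrals. An edge is sent to $\phi$, a decoration to the corresponding form, and the internal vertices are integrated out along the fibers of $\FM_M(r+k) \to \FM_M(r)$. By Stokes' theorem applied to semi-algebraic fiber integrals, this is a chain map compatible with the differential that contracts edges. The remaining boundary contributions, coming from hidden faces and from faces where points escape collectively, must be shown to vanish. This uses Kontsevich's vanishing lemmas, transported to $M$ via the framing near the diagonal.

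The third step defines $\mathsf{Graphs}_M \to \G_A$. Graphs with internal vertices are sent to zero, external edges are sent to $\omega_{ij}$, and decorations are multiplied into the factor $A^{\otimes S}$. This map is well defined once the partition function (the vacuum graphs) is shown to be trivializable. For simply connected manifolds this holds after a gauge change, because the relevant graph cohomology obstructions vanish for degree reasons.

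Finally, we show that both maps are quasi-isomorphisms. We induct on $r$ using the Fadell--Neuwirth fibrations $\Conf_{r+1}(M) \to \Conf_r(M)$ and run a Serre-type spectral sequence comparison. On $E_2$, both sides reduce to $H^*(\Conf_r) \otimes H^*(M \setminus r\text{ pts})$, and for $\G_A$ this is the Lambrechts--Stanley computation. Simple connectivity makes the local systems trivial, so the comparison applies. I expect the main obstacle to be the vanishing of the boundary contributions together with the triviality of the partition function in step three.
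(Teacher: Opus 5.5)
The paper gives no argument of its own here; it imports the result from \cite{Idrissi2019,CamposWillwacher2023}, and your outline is essentially the strategy of those references: a propagator with $d\phi$ representing the diagonal class, configuration-space integrals out of a decorated graph complex, a projection onto $\G_A$ killing internal vertices once the partition function is shown to be trivial, and a comparison of the zigzag by induction on the number of points. A few corrections of detail: no framing of $M$ is assumed or needed, because the hidden-face vanishing lemmas are proved fiberwise over the unit tangent sphere bundle; since $M$ is closed there are only collision faces and no faces where points escape; and simple connectivity and $d \geq 4$ enter through the existence of a Poincaré duality model and the degree-counting vanishing of the partition function, not through the propagator, which exists for any closed oriented $M$.
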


The collection $\G_A$ is equipped with an $\FI$-comodule structure, which encodes the $\FI$-comodule structure of $\Conf_{\bullet}(M)$ given by forgetting points.
This $\FI$-comodule structure is given, for an injection $f : S \hookrightarrow T$, by the dg-algebra map $f_* : \G_A(S) \longrightarrow \G_A(T)$ defined on generators by $f_*(a_i) = a_{f(i)}$ and $f_*(\omega_{ij}) = \omega_{f(i)f(j)}$.

\begin{remark}
  This model is simply a smaller version, in characteristic zero, of the model in Theorem~\ref{thm: point-set cotensor with spaces}. Indeed, over a field of characteristic zero, there is a quasi-isomorphism of twisted dg-cocommutative coalgebras in right $\Lie_d$-modules:
  \begin{equation}
    \B_\iota \bigl( \ch*~(M^+;\R) \otimes \OB(\sLie \otimes \E) \circ (s^d \I)\bigr) \simeq C_*^{\Lie} \bigl( A^{-*} \otimes L_d \bigr).
  \end{equation}
  Indeed, this follows from the fact that there is a quasi-isomorphism of dg-operads $\sLie \otimes \E \qi \sLie$, which in turn induces a quasi-isomorphism of dg-operads $\OB(\sLie \otimes \E) \qi \OB(\sLie) \qi \sLie$. Moreover, we have an isomorphism of Lie algebras in right $\Lie_d$-modules:
  \begin{equation*}
    \OB(\sLie) \circ (s^d \I) \qi \sLie \circ (s^d \I) = s L_d.
  \end{equation*}
  The $\E$-algebras $A$ and $\ch*~(M^+; \R) = \ch*(M;\R)$ are quasi-isomorphic. Therefore, we can replace $\ch*~(M^+;\R)$ by $A$ and obtain an equivalent $(\E \otimes \Lie)$-algebra (in right $\Lie_d$-modules):
  \begin{equation*}
    \ch*(M^+;\R) \otimes s L_d \simeq A \otimes s L_d~.
  \end{equation*}
  The structure of the RHS factors through a $(\Com \otimes \sLie)$-algebra structure, so that we have an equivalence of dg-cocommutative coalgebras in right $\Lie_d$-modules. Finally, the bar construction $\B_\iota$ is equivalent to the Chevalley--Eilenberg homology in characteristic zero, when applied to (the augmentation of a shifted) Lie algebra.
\end{remark}

\subsubsection{A twisted cocommutative coalgebra model}

The linear dual $\G_A^\vee$ is an arity-wise dg-cocommutative coalgebra.
Applying the twist functor (Section~\ref{sec:twist-detwist}), we get a twisted dg-cocommutative coalgebra $\Tw \G_A^\vee$.

When $\chi(A) = \chi(M) = 0$, the symmetric sequence $\G_A$ is equipped with a right $H_*(\EE_d)$-module structure~\cite[Proposition~16]{Idrissi2019}.
It is easy to check that the $\FI$-comodule structure of $\G_A$ coincides with the restriction of this right $H_*(\EE_d)$-module structure along the morphism of operads $\EE_0 \longrightarrow \EE_d$.
Moreover, the right module structure restricts to a right $\Lie_d$-module structure, where $\Lie_d$ is the operad governing $(d-1)$-shifted Lie algebras.
From the proof of \cite[Proposition~16]{Idrissi2019}, we can see that the right $\Lie_d$-module is well-defined even when $\chi(A) \neq 0$.

Let $L_d = \Lie \circ s^{d-1} \I = s^{d-1}\Lie_d$, which is a Lie algebra in the category of right $\Lie_d$-modules.
We can take its arity-wise tensor product with $A^{-*}$ (the degree-reversed Poincaré duality model), which remains a Lie algebra in right $\Lie_d$-modules.
We can then form its Chevalley--Eilenberg complex, which is a twisted dg-cocommutative coalgebra in right $\Lie_d$-modules.

\begin{proposition}\label{prop:iso g-a}
  There is an isomorphism of twisted dg-co\-commutative coalgebras in right $\Lie_d$-modules:
  \begin{equation}
    \Tw \G_A^\vee \cong C_*^{\Lie} \bigl( A^{-*} \otimes L_d \bigr).
  \end{equation}
  If moreover $\chi(A) = 0$, then this is an isomorphism of twisted dg-cocommutative coalgebras in right $\uPois_d$-modules.
\end{proposition}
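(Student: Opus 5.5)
The plan is to compare both sides arity by arity as graded symmetric sequences first, and only then check the coalgebra structure, the differential, and the right module structure.

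\emph{Underlying objects.} On the right, $A^{-*}\otimes L_d$ in arity $S$ is a sum, over ways of grouping $S$ into blocks, of $A^{-*}\otimes s^{d-1}\Lie_d(\text{block})$. Hence the Chevalley--Eilenberg complex $C_*^{\Lie}(A^{-*}\otimes L_d)(S)$ is
\[
  \bigoplus_{\pi \vdash S} \bigotimes_{B\in\pi} \bigl(s A^{-*}\otimes s^{d-1}\Lie_d(B)\bigr)
\]
(symmetric powers for the Day convolution, which are free since the blocks are distinct). On the left, I use the standard basis of $\G_A(S)$ given by Arnold forests. These are sums over partitions $\pi$ of $S$, with one tensor factor of $A$ per block (the relations $(a_i-a_j)\omega_{ij}=0$ collapse the $A$-labels within a block), times the top cohomology $H^{(d-1)(|B|-1)}(\Conf_B(\R^d))$ of each block. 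Dually, that top homology is $s^{(d-1)(|B|-1)}\Lie_d(B)$ up to the sign representation twist. The map $\Tw\G_A^\vee \to C^{\Lie}_*$ is defined by sending a dual basis element $(a^\vee\text{-labels})\otimes(\text{forest})^\vee$ to the corresponding product of Lie words tensored with the dual elements of $A$, identified with $A^{-*}$ via Poincaré duality. I then check that the degree shifts match: $d-1$ per block from $s^{d-1}$, plus $1$ from the CE suspension, plus the internal grading. This is the same bookkeeping as in \cite{Idrissi2019,CamposWillwacher2023}.

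\emph{Coalgebra structure.} The twisted comultiplication on $\Tw\G_A^\vee$ is dual to the product $\G_A(A')\otimes\G_A(B')\to\G_A(S)$ obtained by pushing forward along the two inclusions and multiplying (this is formula \eqref{eq:oplax tw}). On forests supported on a partition $\pi$, this product is nonzero exactly when $\pi$ is refined by $\{A',B'\}$. So the dual coproduct splits the blocks into two groups, which is the shuffle coproduct of the CE coalgebra. Signs follow from Koszul conventions.

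\emph{Differential.} This is the step I expect to be the main obstacle. The differential $d\omega_{ij}=\Delta_{ij}$ merges two blocks, inserting the diagonal class. Dually it sends a pair of blocks to their bracket, with the $A$-labels multiplied via the dual of $\Delta_A$, which is the product of $A^{-*}$ under Poincaré duality. This should match the CE differential
\[
  [a\otimes x,\, b\otimes y] = \pm\, ab\otimes[x,y].
\]
The check is done on an Arnold basis element. The delicate part is to verify that the dual of the Arnold basis transforms under $\omega_{ij}$-insertion exactly like the Lie bracket in $\Lie_d$ (the classical Cohen pairing), and to track signs carefully.

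\emph{Module structure.} Finally I check compatibility with the right $\Lie_d$-action. On $\G_A$, insertion of a bracket at point $i$ acts (dually to \cite[Proposition~16]{Idrissi2019}) by doubling $i$ and inserting $\omega$ between the new points. This matches right composition on $L_d=\Lie\circ s^{d-1}\I$. When $\chi(A)=0$, the action extends to the $\Com$ part of $\uPois_d$ (merging points, using $\Delta_A$ with vanishing Euler class) and to arity $0$ (forgetting points). These correspond on the CE side to the commutative right action on $A^{-*}$ and to the $\FI$-structure. Both are verified on generators.
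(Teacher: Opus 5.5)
Your strategy for the $\Lie_d$ statement is sound, but it is not the paper's route. The paper does not construct the isomorphism. It imports the underlying isomorphism, together with its compatibility with the differential and the right $\Lie_d$-action, from \cite[Lemma~82]{Idrissi2019}, and the $\uCom$-compatibility from \cite[Section~5.4.1]{Idrissi2023}. It argues only the new point, the twisted coalgebra compatibility, by noting that both sides are cofree. Dually, $\G_A$ with the product of pushforwards is the free twisted commutative algebra on connected monomials $a\cdot\prod_k\omega_{i_kj_k}$, and these correspond to Lie monomials under the duality of \cite{sinhaNonequivariantHomologyLittle2013}. Your coalgebra step is exactly this observation, made explicit on the Arnold-forest basis. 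Your differential and module steps reprove the cited lemma. The input you flag there is indeed Sinha's graph--tree pairing, in the form $\langle [x,y],\omega_G\rangle=\pm\langle x,\omega_{G|_{B_1}}\rangle\langle y,\omega_{G|_{B_2}}\rangle$ when $G$ has exactly one edge between $B_1$ and $B_2$, and $0$ otherwise. The paper's route is much shorter. Yours is self-contained and checks every structure on one blockwise-defined map, whereas the cofreeness argument tacitly needs the isomorphism of \cite{Idrissi2019} to be the one induced on cogenerators.

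The step that does not work as written is the $\uPois_d$ part. The right $\Com$-action on the Chevalley--Eilenberg side is not an action on $A^{-*}$. Set $S'=(S\setminus\{i\})\sqcup\{i',i''\}$ and dualize the merging map $\G_A(S')\to\G_A(S)$, given by $a_{i'}b_{i''}\mapsto(ab)_i$, $\omega_{i'j},\omega_{i''j}\mapsto\omega_{ij}$, $\omega_{i'i''}\mapsto 0$. You find that inserting $\mu\in\Com(2)$ at $i$ \emph{splits} the block containing $i$ into two blocks, containing $i'$ and $i''$ respectively. On the Lie word this is the Leibniz rule of $\uPois_d$; on the label it replaces $\gamma$ by $\gamma\cdot\Delta_A$, the coproduct dual to the multiplication of $A$ under Poincaré duality. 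The hypothesis $\chi(A)=0$ enters because $d\omega_{i'i''}=\Delta_{i'i''}$ is sent to $\mu_A(\Delta_A)_i=\chi(A)\,\mathrm{vol}_i$. You must write this structure down before "verified on generators" has content.

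Two smaller slips:
\begin{enumerate}
\item Since $\Lie_d$ is already shifted, $H_{(d-1)(|B|-1)}(\Conf_B(\R^d))\cong\Lie_d(B)$ with no extra suspension or sign twist. Otherwise your per-block degree count, $-d$ from Poincaré duality plus $1+(d-1)$, fails.
\item On $\G_A$ the differential $d\omega_{ij}=\Delta_{ij}$ splits blocks; it is its dual that merges them.
\end{enumerate}
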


\begin{proof}
  The isomorphism and the compatibility with the right $\Lie_d$-module structure are proved in \cite[Lemma 82]{Idrissi2019}, and the compatibility with the right $\uCom$-module structure in \cite[Section 5.4.1]{Idrissi2023}.

  However, the compatibility of the twisted coalgebra structure is not checked (or even considered) in either reference.
  On the right hand side, the twisted coalgebra structure is cofreely cogenerated by a shift of $A^{-*} \otimes L_d$.
  On the left hand side, the twisted coalgebra structure is also cofree.
  It is easier to describe the dual algebra structure on $\Tw \G_A^\vee$, which is freely generated by ``connected monomials'' $a \cdot \prod_{k = 1}^{m} \omega_{i_k j_k} \in \G_A(S)$, where $a$ is in $A^{-*}$ and the graph with vertices $S$ and edges $\{i_k, j_k\}$ is connected.
  Under the usual duality between $\uPois_d$ and $\uPois_d^\vee$~\cite{sinhaNonequivariantHomologyLittle2013}, these generators correspond to Lie monomials in the Poisson operad.
\end{proof}

\subsubsection{A twisted rational model for configuration spaces}

We start by considering the dual version of Sullivan's rational homotopy theory, where we use dg-cocommutative coalgebras instead of cdgas.
In this setting, the rational homotopy type of a simply connected space $X$ is modeled by the dg-cocommutative coalgebra of piecewise linear chains on $X$, denoted by $C_{PL}(X)$.
The results of \cite{CamposWillwacher2023,Idrissi2019} are stated over $\R$, so we tensor $C_{PL}(X)$ with $\R$ to get a model $C_{PL}(X; \R)$ for the real homotopy type of $X$.

Since $\Conf_\bullet(\mathbb{R}^d)$ does not form a strict operad, we instead consider the Axelrod--Singer--Fulton--MacPherson compactifications of the configuration spaces $\FM_d$, which do form a topological operad and are homotopy equivalent to the configuration spaces, see~\cite{fulton_compactification_1994,axelrod_chernsimons_1994,sinha_manifold-theoretic_2004,lambrechts_formality_2014}.
We also compactify the configuration spaces of $M$ to get $\FM_M$, which are right modules over $\FM_d$ when $M$ is parallelizable.
The rational homotopy type of the configuration spaces $\Conf_\bullet(M)$ is thus modeled by the collection of dg-cocommutative coalgebras $C_{PL}(\FM_M; \R)$.
The action of the little disks operad on the configuration spaces induces an action of the operad $C_{PL}(\FM_d; \R)$ on $C_{PL}(\FM_M; \R)$.

\begin{theorem}[Consequence of the main results of {\cite{CamposWillwacher2023,Idrissi2019}}]\label{thm: twisted real model for configuration spaces of manifolds}
  There is a zig-zag of quasi-isomorphisms of twisted dg-cocommutative coalgebras in right $\Lie_d$-modules:
  \[
    C_*^{\Lie} \bigl( A \otimes L_d \bigr) \simeq \Tw(C^{PL}_*(\FM_M; \R)).
  \]
  If moreover $M$ is parallelizable then this is an equivalence of twisted dg-cocommutative coalgebras in right $\uPois_d$-modules.
\end{theorem}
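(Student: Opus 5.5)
We would obtain the zig-zag by linear duality from the cochain-level zig-zag of Campos--Willwacher and Idrissi, then push it through the twist functor and identify the source using Proposition~\ref{prop:iso g-a}. The main results of \cite{CamposWillwacher2023,Idrissi2019} give a zig-zag of quasi-isomorphisms between $\G_A$ and $\Omega^*_{PA}(\FM_M)$ (semi-algebraic forms), passing through a graph complex $\mathsf{Graphs}_M$. Each intermediate object is an arity-wise commutative dg-algebra, and the zig-zag is compatible with the $\FI$-comodule structures given by forgetting points. When $M$ is parallelizable (or framed), it is moreover compatible with the right comodule structures over the corresponding Hopf cooperads: $H^*(\EE_d)$, $\mathsf{Graphs}_d$, $\Omega^*_{PA}(\FM_d)$. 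The first step is to record this precisely. In particular, the relevant maps should be morphisms of arity-wise CDGAs in right Hopf comodules, and restricting along $\Lie_d \to H_*(\EE_d)$ should give compatibility with right $\Lie_d$-module structures after dualizing.

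The second step is dualization. Since everything is of finite type in each arity and degree, taking $(-)^\vee$ arity-wise turns arity-wise CDGAs into arity-wise cocommutative coalgebras, and right comodules into right modules. It also sends quasi-isomorphisms to quasi-isomorphisms. We then compare $\Omega^*_{PA}(\FM_M)^\vee$ with $C^{PL}_*(\FM_M;\R)$ by the standard zig-zag relating PA forms and PL chains. This zig-zag is natural in the space, so it respects the $\FM_d$-action and the forgetful maps. We thereby obtain a zig-zag of quasi-isomorphisms of arity-wise cocommutative coalgebras in right $\Lie_d$-modules (right $\uPois_d$-modules in the parallelizable case) between $\G_A^\vee$ and $C^{PL}_*(\FM_M;\R)$.

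The third step applies $\Tw$. Since $\Tw$ is the identity on underlying objects, it preserves quasi-isomorphisms. Because it is symmetric oplax monoidal, it sends arity-wise cocommutative coalgebras in right modules over a unital operad to twisted ones, functorially; this is how the twist was promoted to coalgebras after Proposition~\ref{prop: detwisting the twist}. For $\Lie_d$, which is not unital, we use only the underlying $\FI$-structure coming from $\uPois_d$, or from forgetting points. We must then check that the twisted coproduct commutes with the right $\Lie_d$-action. This holds because the right action and the projections $\pi^A_S$ satisfy the usual compatibility: acting by an operation in the block $A$ commutes with projecting onto $A$ or onto $B$. Finally, Proposition~\ref{prop:iso g-a} identifies $\Tw\G_A^\vee$ with $C_*^{\Lie}(A^{-*}\otimes L_d)$, which is the left-hand side in the statement.

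The main obstacle is the first step. The cited zig-zags are built with real coefficients and involve semi-algebraic forms and graph complexes. Their compatibility with the $\FI$-structure, the Hopf comodule structures, and the $\Lie_d$-module structures must be extracted carefully, especially when $\chi(M)\neq 0$, where only the $\Lie_d$-part survives. Here we would verify directly on generators that the maps $\mathsf{Graphs}_M \to \G_A$ (contracting internal vertices) and $\mathsf{Graphs}_M \to \Omega^*_{PA}(\FM_M)$ (integration) commute with the $\Lie_d$-coaction given by inserting an edge $\omega_{ij}$. A second point needing care is the PA-versus-PL comparison, which we would treat as a natural zig-zag of functors on semi-algebraic sets.
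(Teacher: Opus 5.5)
Your outline matches the paper's proof. That proof consists only of three assertions: the Campos--Willwacher/Idrissi zig-zag between $C_{PL}(\FM_M;\R)$ and $\G_A^\vee$ respects the $\FI$-comodule and coalgebra structures; $\Tw$ is the identity on underlying objects, so it preserves quasi-isomorphisms; and Proposition~\ref{prop:iso g-a} identifies $\Tw\G_A^\vee$ with the Chevalley--Eilenberg complex. The extra detail you supply, however, contains a step that fails as written.

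$\Omega^*_{PA}(\FM_M)$ is not finite-dimensional in each arity and degree. The transpose of the wedge product therefore lands in $(\Omega\otimes\Omega)^\vee$, which is strictly bigger than $\Omega^\vee\otimes\Omega^\vee$. So the arity-wise dual of the forms is not a cocommutative coalgebra at all. You therefore cannot build a zig-zag of (twisted) coalgebras by dualizing the CDGA zig-zag term by term and then comparing $\Omega^*_{PA}(\FM_M)^\vee$ with $C^{PL}_*(\FM_M;\R)$; there is no coalgebra-level ``standard zig-zag'' to invoke there. To make your route work, you have two options. You could dualize only at finite-type stages, for instance after replacing the forms by finite-type models that carry compatible Hopf right comodule and $\FI$-structures, which is genuinely nontrivial. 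Or you could use a comparison formulated directly between $C_{PL}(\FM_M;\R)$ and $\G_A^\vee$ as coalgebras. The paper takes the latter as given from the references. So your ``main obstacle'' is exactly the black box the paper relies on, and your dualization argument does not discharge it.

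A smaller gap concerns your check that the twisted coproduct commutes with the right $\Lie_d$-action. Your argument only covers components in which both new inputs $s_1,s_2$ of an inserted bracket lie in the same block. The substantive case is the components with $s_1\in A$ and $s_2\in B$. The Day-convolution action never produces these, so they must vanish. They do vanish, but for a different reason than the one you give. The bracket is primitive for the Hopf diagonal, and it is killed when the arity-$0$ unit is plugged into one of its inputs. On $\G_A$ this says that $\iota_A x\cdot\iota_B y$ contains no $\omega_{ij}$ with $i\in A$ and $j\in B$. The fact is not a formal consequence of ``operations in a block commute with projections''. For the commutative product $\mu$, where $\mu\circ_2 e=\id$ for the arity-$0$ unit $e$, the analogous split components are in general nonzero.
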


\begin{proof}
  Again, the compatibility with the twisted coalgebra structure is not checked in \cite{Idrissi2019,CamposWillwacher2023}.
  Nevertheless, it follows from the fact that the quasi-isomorphism between $C_{PL}(\FM_M; \R)$ and $\G_A^\vee$ defined in these references is compatible with the $\FI$-comodule structure and the coalgebra structure, as well as Proposition~\ref{prop:iso g-a}.
\end{proof}

\subsubsection{A real model for configuration spaces via the detwist functor}
Finally, we extract from Theorem~\ref{thm: twisted real model for configuration spaces of manifolds} the main result of \cite{CamposWillwacher2023,Idrissi2019}. Concretely, since the zig-zag of quasi-isomorphisms in Theorem~\ref{thm: twisted real model for configuration spaces of manifolds} is compatible with the right $\uPois_d$-module structures, it is in particular compatible with the right $\uCom$-module structures, and thus we can apply the detwist functor of Proposition~\ref{prop: symmetric detwist functor}.

\begin{corollary}
  There is a zig-zag of quasi-isomorphisms of (arity-wise) dg-commutative coalgebras in right $\uPois_d$-modules
  \[
    \Detw_\infty(C_*^{\Lie} \bigl( A \otimes L_d \bigr)) \simeq C_{PL}(\FM_M; \R).
  \]
\end{corollary}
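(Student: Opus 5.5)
The plan is to apply the symmetric detwist functor $\Detw_\infty$ of Proposition~\ref{prop: symmetric detwist functor} to the zig-zag of Theorem~\ref{thm: twisted real model for configuration spaces of manifolds}, and then to identify the right-hand end using Proposition~\ref{prop: detwisting the twist symmetric}. First, I would restrict the zig-zag of twisted dg-cocommutative coalgebras in right $\uPois_d$-modules along the operad morphism $\uCom \to \uPois_d$. This gives a zig-zag of twisted counital cocommutative coalgebras in right $\uCom$-modules, and hence in right $\Com$-modules, which is the input category of $\Detw_\infty$. Counitality is not an issue: the twisted counit is the projection to arity $0$, where both sides are $\R$.

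Second, I would check that $\Detw_\infty$ preserves quasi-isomorphisms and the right $\uPois_d$-module structure. For quasi-isomorphisms this is immediate, because $\Detw_\infty$ does not change the underlying symmetric sequence, only the coalgebra structure. For the module structure, I would observe that the diagonal $\nabla(C)(S)\colon C(S)\to C(S\sqcup S)$ is built from the right $\Com$-module action. By the Leibniz/distributivity relation in $\uPois_d$, this action commutes with the $\uPois_d$-action up to reindexing. The twisted decomposition map $\Delta_C$ commutes with the $\uPois_d$-action by hypothesis, and so does the final projection onto $C(S)\otimes C(S)$. Hence the composite $\Delta_{\Detw_\infty(C)}$ is a morphism of right $\uPois_d$-modules for the Hadamard structure. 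Applying $\Detw_\infty$ to every object and map of the zig-zag then produces a zig-zag of arity-wise coalgebras in right $\uPois_d$-modules, with the same underlying maps, so all of them are still quasi-isomorphisms.

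Third, I would identify the endpoint: $\Detw_\infty(\Tw(C_{PL}(\FM_M;\R))) \cong C_{PL}(\FM_M;\R)$, by Proposition~\ref{prop: detwisting the twist symmetric}. Its hypothesis is that $C_{PL}(\FM_M;\R)$ is an arity-wise counital commutative coalgebra in right $\uCom$-modules. The right module structure comes from the $\FM_d$-action restricted along $\uCom \to C_{PL}(\FM_d;\R)$: the arity-$0$ part forgets points and the arity-$2$ class fuses points. The coalgebra structure maps are compatible with this action because the action is induced by maps of spaces and $C_{PL}$ is monoidal.

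The main obstacle is this third step, because $C_{PL}$ is only lax or oplax monoidal up to quasi-isomorphism. The action of $\FM_d$ therefore gives a right module over $C_{PL}(\FM_d;\R)$ only up to coherent homotopy, and $\uCom$ does not map strictly into it. My first attempt would be to replace $C_{PL}(\FM_M;\R)$ by the strict model $\G_A^\vee$, which is a strict right $\uPois_d$-module when $\chi(M)=0$ (automatic for parallelizable $M$). I would then apply Proposition~\ref{prop: detwisting the twist symmetric} to $\G_A^\vee$ and transport the result along the compatible quasi-isomorphism $\G_A^\vee \simeq C_{PL}(\FM_M;\R)$ of \cite{CamposWillwacher2023,Idrissi2019}, interpreting the target in the homotopy-coherent sense.
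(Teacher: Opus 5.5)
Your proposal follows the paper's argument. You restrict the zig-zag of Theorem~\ref{thm: twisted real model for configuration spaces of manifolds} along $\uCom\to\uPois_d$, apply $\Detw_\infty$ (which preserves quasi-isomorphisms because it leaves underlying objects unchanged), and identify the endpoint via Proposition~\ref{prop: detwisting the twist symmetric}; your $\uPois_d$-compatibility check and your detour through $\G_A^\vee$ to handle the non-strict $C_{PL}(\FM_d;\R)$-action are extra care on points the paper leaves implicit. One correction to that compatibility check: by the Leibniz rule, $\lambda(\mu_2(x_1,x_1'),\mu_2(x_2,x_2'))$ is not just $\mu_2(\lambda(x_1,x_2),\mu_2(x_1',x_2'))\pm\mu_2(\mu_2(x_1,x_2),\lambda(x_1',x_2'))$ but also contains cross terms $\lambda(x_1,x_2')$, $\lambda(x_1',x_2)$ linking the two copies of $S$, so the action does not commute merely up to reindexing; these cross terms are killed by the final projection onto $C(S)\otimes C(S)$, which separates the two bracketed inputs, so your conclusion stands.
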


\begin{proof}
  It follows directly from Theorem~\ref{thm: twisted real model for configuration spaces of manifolds}, using Proposition~\ref{prop: symmetric detwist functor}. Indeed, the detwist functor $\Detw_\infty$ clearly preserves quasi-isomorphisms since it does not change the underlying object, and the detwist of a twist is isomorphic to the original object by Proposition~\ref{prop: detwisting the twist symmetric}.
\end{proof}

\begin{remark}\label{rmk: formality implies fully symmetric detwist}
  Something that seems to be crucial in the characteristic zero case is the existence of a morphism of operads $\EE_\infty \longrightarrow \EE_d$, which follows from the formality of the $\EE_d$ operads and which is modelled by the natural inclusion $\uCom \longrightarrow \uPois_d$. This is what allows us to have a \emph{fully symmetric detwist functor} in the rational case and to recover the rational models of configuration spaces from the twisted coalgebra equivalence in Theorem~\ref{thm: twisted real model for configuration spaces of manifolds}. Outside characteristic zero, there cannot be a morphism of operads $\EE_\infty \longrightarrow \EE_d$, and the formality statements only hold up to a range that depends on the characteristic and generally fail; see \cite{CiriciHorel22, DeBritoHorel21}.
\end{remark}

\subsection{The $p$-adic homotopy type of configuration spaces and twisted coalgebras}\label{sec:p-adic-homotopy}
We conjecture that the weak equivalence of twisted coalgebras of Theorem \ref{thm: twisted real model for configuration spaces of manifolds} also holds in positive characteristic. We then explain what consequences it could have for the homotopy invariance of the $p$-adic homotopy type of configuration spaces of manifolds. We assume throughout this subsection that $\kk = \overline{\mathbb{F}}_p$, the algebraic closure of the field with $p$ elements $\mathbb{F}_p$.

\subsubsection{The \texorpdfstring{$\mathbb{E}_\infty$}{E-infinity}-coalgebras encoding \texorpdfstring{$p$}{p}-adic homotopy types}
In his groundbreaking work, Mandell showed in \cite{Mandell2001} that the cochains functor
\[
  \ch*: \mathsf{Spaces}_p^{\mathsf{op}} \longrightarrow \mathbb{E}_\infty\text{-}\mathsf{alg}
\]
from finite-type connected nilpotent $p$-adic spaces to $\mathbb{E}_\infty$-algebras in chain complexes over $\overline{\mathbb{F}}_p$ is (homotopically) fully faithful. This statement can either be interpreted at the $\infty$-categorical level, or at the model-categorical level, upon choosing a point-set model for $\mathbb{E}_\infty$-algebras. More recently, Bachmann and Burklund removed in \cite{Bachmann2024} the finite-type assumption by working with $\mathbb{E}_\infty$-coalgebras, and showed that the chains functor
\[
  \ch: \mathsf{Spaces}_p \longrightarrow \mathbb{E}_\infty\text{-}\mathsf{coalg}
\]
from nilpotent $p$-adic spaces to $\mathbb{E}_\infty$-coalgebras in chain complexes over $\overline{\mathbb{F}}_p$ is fully faithful at the $\infty$-categorical level. Using the main result of \cite{Petersen2026}, which gives point-set models for homotopy-coherent coalgebras, one gets that the cellular chains functor
\[
  \ch: \mathsf{sSets} \longrightarrow \dg~\OB\E\text{-}\coalg~[\mathrm{q.iso}^{-1}]~,
\]
endowed with the explicit dg $\OB\E$-coalgebra structure (given by the pullback of the $\E$-coalgebra structure of \cite{BergerFresse04} along the map $\OB\E \longrightarrow \E$), is (homotopically) fully faithful on nilpotent $p$-adic spaces.

\begin{lemma}
  Let $M$ be a simply connected manifold of dimension $\geq 3$. For all $k \geq 0$, the space $\Conf_k(M)$ is simply connected, and thus nilpotent.
\end{lemma}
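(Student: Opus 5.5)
The natural route is induction on $k$ using the Fadell--Neuwirth fibrations. For $k \geq 1$, the projection forgetting the last point
\[
  \Conf_k(M) \longrightarrow \Conf_{k-1}(M), \qquad (x_1, \dots, x_k) \mapsto (x_1, \dots, x_{k-1}),
\]
is a locally trivial fibration whose fiber over $(x_1, \dots, x_{k-1})$ is $M \setminus \{x_1, \dots, x_{k-1}\}$. This holds for any manifold without boundary; if $M$ has boundary, one can first replace $M$ by its interior, which is homotopy equivalent and has homotopy equivalent configuration spaces. The base cases are immediate: $\Conf_0(M)$ is a point, and $\Conf_1(M) = M$ is simply connected by hypothesis. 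I will take ``simply connected'' to include path-connected.

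For the inductive step, I need the fiber $F = M \setminus \{q_1, \dots, q_{k-1}\}$ to be path-connected and simply connected. Removing finitely many points from a connected manifold of dimension $d \geq 2$ leaves it connected. For $\pi_1$, I would argue by general position, removing the points one at a time. Since $d \geq 3$, any loop, and any null-homotopy of a loop (a map of a $2$-disk), can be perturbed rel boundary to avoid a given point, because $1 + 0 < d$ and $2 + 0 < d$. Hence $\pi_1(M \setminus \{q\}) \to \pi_1(M)$ is surjective and injective. Equivalently, one can use van Kampen on $M = (M \setminus \{q\}) \cup D$ with $D \cap (M \setminus \{q\}) \simeq S^{d-1}$, which is simply connected for $d \geq 3$. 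Iterating over the $k-1$ points shows that $\pi_1(F) = 0$.

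The long exact sequence of the fibration then gives
\[
  \pi_1(F) \to \pi_1(\Conf_k(M)) \to \pi_1(\Conf_{k-1}(M)) \to \pi_0(F) \to \pi_0(\Conf_k(M)) \to \pi_0(\Conf_{k-1}(M)).
\]
The outer terms vanish by the fiber computation and the induction hypothesis. Therefore $\Conf_k(M)$ is path-connected and simply connected, and a simply connected space is trivially nilpotent.

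The only step requiring care is the general position argument, specifically that it works rel basepoint and that the dimension hypothesis $d \geq 3$ is exactly what makes $S^{d-1}$ simply connected. Everything else is routine.
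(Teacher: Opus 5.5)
Your proposal is correct and follows the same approach as the paper, which also argues by induction on $k$ using the long exact sequence of the fibrations $\Conf_k(M) \to \Conf_{k-1}(M)$. You additionally supply the details the paper leaves implicit, namely that the fiber $M \setminus \{q_1, \dots, q_{k-1}\}$ is connected and simply connected when $d \geq 3$ (by van Kampen or general position) and the reduction to manifolds without boundary.
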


\begin{proof}
  This follows by induction on $k$, using the long exact sequence relating the different configuration spaces.
\end{proof}

As a consequence of the above lemma, the $\Sym$-module $\ch(\Conf_\bullet(M))$, together with its arity-wise dg $\OB\E$-coalgebra structure, is an explicit model for the $p$-adic homotopy type of a simply connected manifold $M$. When $M$ is a parallelizable manifold of dimension $d$, the $\Sym$-module $\ch(\Conf_\bullet(M))$ is also naturally a right module over the little disks operad $\EE_d$; a model for this action is the natural action of the operad $\ch(\mathsf{FM}_d)$ on $\ch(\mathsf{FM}_M)$.

\medskip

In particular, $\ch(\mathsf{FM}_M)$ comes equipped with a canonical $\FI$-comodule structure, which allows us to apply the twist functor of Proposition~\ref{prop: oplax twist functor for FI modules} to it and obtain a dg $\OB\E$-coalgebra $\Tw(\ch(\mathsf{FM}_M))$ in right $\ch(\mathsf{FM}_d)$-modules.

\subsubsection{A twisted conilpotent divided powers $\EEnu_\infty$-coalgebra in right spectral Lie modules}
The right-hand side in Theorem~\ref{thm:main} is given by the following collection of chain complexes:
\[
  \B_\iota^{(k)} \bigl( \ch*~(M^+) \otimes \OB(\sLie \otimes \E) \circ (s^d V)\bigr)
\]
for any chain complex $V$, where we only consider the weight $k$ part which involves $k$ copies of $V$. This chain complex is naturally endowed with a dg $\B(\sLie \otimes \E)$-coalgebra structure. The goal of this subsection is to lift this coalgebra structure to $\Sym$-modules and to be able to compare it with the twisted coalgebra structure of the previous subsection.

\begin{remark}
  The conilpotent dg cooperad $\B(\sLie \otimes \Surj)$ is a projective resolution of the commutative cooperad $\Com^*$ as an $\Sym$-module. Therefore the $\infty$-category of dg $\B(\sLie \otimes \Surj)$-coalgebras encodes a version of divided powers conilpotent $\EEnu_\infty$-coalgebras.
\end{remark}

\begin{proposition}
  The $\Sym$-module
  \[
    \left\{\B_\iota^{(k)} \bigl( \ch*~(M^+) \otimes \OB(\sLie \otimes \E) \circ (s^d \I)\bigr)\right\}_{k \geq 1}
  \]
  is naturally a twisted dg $\B(\sLie \otimes \Surj)$-coalgebra in right $\OB(\Lie_d \otimes \E)$-modules.
\end{proposition}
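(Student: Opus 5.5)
The strategy is to exploit the fact that everything in sight is functorial in the input symmetric sequence $s^d \I$, and that this input is a right module over $\Lie_d$. More precisely, $s^d\I$ sits inside the free $\OB(\sLie\otimes\E)$-algebra as the arity-one generators. The free algebra $\OB(\sLie\otimes\E)\circ(s^d\I)$, viewed as a symmetric sequence, is just a shifted copy of the operad $\OB(\sLie\otimes\E)$ itself. It is therefore an $\OB(\sLie\otimes\E)$-algebra in right $\OB(\sLie\otimes\E)$-modules, since the left and right actions commute by associativity of operadic composition. After desuspending by $s^{d}$ (arity-wise by $s^{d(k-1)}$ with the appropriate sign representation), this becomes a right $\OB(\Lie_d\otimes\E)$-module. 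Here I use that $\Lie_d$ is the operadic suspension of $\sLie$ and that suspension commutes with the Hadamard product with $\E$ and with bar--cobar, up to isomorphism.

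The plan is as follows. First, I identify $\mathfrak{g}\coloneqq \OB(\sLie\otimes\E)\circ(s^d\I)$ as an $\OB(\sLie\otimes\E)$-algebra in the symmetric monoidal category $(\OB(\Lie_d\otimes\E)\drMod,\dotimes,\unit_0)$. The key point is that the composite product $\PP\circ N$ of an operad with a right module is computed via Day convolution powers $N^{\dotimes n}$. Second, I tensor with $\ch*~(M^+)$ arity-wise. Since $\ch*~(M^+)$ is constant in arity, tensoring commutes with Day convolution:
\[
(A\otimes N_1)\dotimes(A\otimes N_2) \leftarrow A\otimes A\otimes (N_1\dotimes N_2).
\]
Pulling back along the $\Enu$-comodule map of Proposition~\ref{prop:comodule structure}, exactly as in Proposition~\ref{prop:tensor bifunctor}, gives an $\OB(\sLie\otimes\E)$-algebra in $(\OB(\Lie_d\otimes\E)\drMod,\dotimes)$. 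The right module action only touches the $\mathfrak g$ factor, and the comodule map is natural, so compatibility should be formal. Third, I apply $\B_\iota$ internally to this symmetric monoidal category. The bar construction is $\B(\sLie\otimes\E)\circ_{\dotimes}(-)$ with the twisted differential, which is built only from Day convolution powers, symmetric group coinvariants and the algebra structure maps. Hence it yields a $\B(\sLie\otimes\E)$-coalgebra for $\dotimes$, that is, a twisted coalgebra, in right $\OB(\Lie_d\otimes\E)$-modules, and its weight-$k$ part is $\B_\iota^{(k)}$. Finally, I transport the cooperad from $\E$ to $\Surj$ along the table reduction map $\B(\sLie\otimes\E)\to\B(\sLie\otimes\Surj)$, pushing the coalgebra structure forward by corestriction.

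The main obstacle is the second step. There, one must verify that the pulled-back action $\Enu\otimes\OB(\sLie\otimes\E)$ on $A\otimes\mathfrak g$ is well defined in Day convolution rather than arity-wise. In particular, the Hopf diagonal of $\Enu$ must pair with the decomposition $(A\otimes N)^{\dotimes n}\cong A^{\otimes n}\otimes N^{\dotimes n}$ with correct Koszul signs, and compatibly with the operadic suspension used to pass from $\sLie$ to $\Lie_d$. I would check this on generators, using the explicit formula of Proposition~\ref{prop:comodule structure}, and then extend by freeness of the cobar construction.
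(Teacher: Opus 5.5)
Your proposal is correct and follows essentially the same route as the paper: regard $\OB(\sLie \otimes \E)$ as a twisted algebra over itself in right modules over itself, precompose with $s^d \I$ to get a twisted algebra in right $\OB(\Lie_d \otimes \E)$-modules, and apply $\B_\iota$ inside $(\OB(\Lie_d\otimes\E)\drMod, \dotimes)$. You also spell out two steps that the paper's terse proof leaves implicit. The first is the tensoring with $\ch*~(M^+)$, which is formal: $A \otimes -$ is strong monoidal from $(\mathsf{Ch}_\kk,\otimes)$ to Day convolution, and you only pull back along the operad morphism $\Delta$, so no generator-level sign check is needed. The second is the passage from $\B(\sLie\otimes\E)$ to $\B(\sLie\otimes\Surj)$ by corestriction along $\TR$.
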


\begin{proof}
  The $\Sym$-module $\OB(\sLie \otimes \E)$ defines a twisted $\OB(\sLie \otimes \E)$-algebra in right $\OB(\sLie \otimes \E)$-modules. Hence, when precomposed with $(s^d \I)$, we obtain that $\OB(\sLie \otimes \E) \circ (s^d \I)$ is a twisted $\OB(\sLie \otimes \E)$-algebra in right $\OB(\Lie_d \otimes \E)$-modules, to which we can apply the bar construction with respect to $\iota: \B(\sLie \otimes \Surj) \to \OB(\sLie \otimes \Surj)$ on the underlying category of right $\OB(\Lie_d \otimes \E)$-modules. Therefore the end result is naturally a twisted dg $\B(\sLie \otimes \Surj)$-coalgebra in right $\OB(\Lie_d \otimes \Surj)$-modules.
\end{proof}

\begin{remark}
  The total chain complex $\B_\iota \bigl( \ch*~(M^+) \otimes \OB(\sLie \otimes \E) \circ (s^d V)\bigr)$ is simply the image via the Schur functor of the $\Sym$-module in the previous proposition, applied to a particular chain complex $V$.
\end{remark}

\begin{remark}
  It is clear, by similar arguments as in Proposition~\ref{prop:rectification of spectral Lie algebras}, that right $\OB(\Lie_d \otimes \E)$-modules are a model for right modules over the $d$-shifted spectral Lie operad over the base field $\overline{\mathbb{F}}_p$.
\end{remark}

\paragraph{A point-set model for the inclusion functor.}
At the $\infty$-categorical level, there is a forgetful functor from divided-powers conilpotent $\EEnu_\infty$-coalgebras to $\EEnu_\infty$-coalgebras, which is given by first forgetting the divided power operations and then including \footnote{It is not known in general whether this forgetful functor is in fact fully faithful or not; see \cite[Conjecture 2.8.4]{GaitsgoryRozenblyumVolII}.} conilpotent $\EEnu_\infty$-coalgebras into all $\EEnu_\infty$-coalgebras. We construct an analogous functor, at the point-set level, between (twisted) dg $\B(\sLie \otimes \Surj)$-coalgebras and (twisted) dg $\OB\Enu$-coalgebras in right $\OB(\Lie_d \otimes \E)$-modules.

\begin{proposition}
  There is a functor
  \begin{multline}
  \end{multline}
  which does not change the underlying $\Sym$-module of the twisted coalgebra.
\end{proposition}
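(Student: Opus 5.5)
The plan is to build the functor as restriction of structure along a morphism of conilpotent dg cooperads comparing $\B(\sLie \otimes \Surj)$ with the cooperad whose coalgebras model $\EEnu_\infty$-coalgebras. The target is presumably
\[
  \B(\sLie \otimes \Surj)\text{-}\coalg\bigl(\OB(\Lie_d \otimes \E)\drMod, \dotimes\bigr) \longrightarrow \OB\Enu\text{-}\coalg\bigl(\OB(\Lie_d \otimes \E)\drMod, \dotimes\bigr).
\]
A dg $\OB\Enu$-coalgebra structure on $C$ is a morphism of dg operads $\OB\Enu \to \mathrm{Coend}(C)$. By the bar--cobar adjunction, this amounts to a twisting morphism $\B\Enu \to \mathrm{Coend}(C)$, computed in whatever symmetric monoidal category $C$ lives in. Here that is right $\OB(\Lie_d \otimes \E)$-modules with the Day convolution, so everything below is formal once it is carried out in that ambient category.

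\textbf{Step 1.} Write a given twisted $\B(\sLie \otimes \Surj)$-coalgebra structure as $\Delta_C : C \to \B(\sLie \otimes \Surj) \circ C$, using the Day-convolution composition product. Postcomposing with the projection onto the cogenerators $s(\sLie \otimes \Surj)$ gives a degree $-1$ map.

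\textbf{Step 2.} Produce a morphism of dg operads $\OB\Enu \to \Omega\B(\sLie\otimes\Surj)$. Equivalently, produce a twisting morphism $\B\Enu \to \Omega\B(\sLie \otimes \Surj)$, or, more simply, a map of cooperads $\B\Enu \to \B(\sLie\otimes\Surj)$ up to the operadic suspension relating $\Com$ and $\sLie$.

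The natural candidate is obtained by applying $\B$ to an operad morphism. Use $\TR : \Enu \to \Surjnu$, together with a Hopf-type identification of $\Surj$ with $\Surj \otimes \Com$. Take care of the suspension through the canonical twisting morphism $\Com^\vee \to \sLie$: the operadic Koszul twisting morphism relating $\mathcal{S}^{-1}\Com$ and $\Lie$ yields a cooperad map between shifted bar constructions.

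Concretely, I would define the map on cogenerators as $s\underline\sigma \mapsto s(\ell \otimes \TR(\underline\sigma))$, where $\ell$ runs over a shifted-Lie element chosen so that the bar differentials match. Then I would check that it is a morphism of conilpotent dg cooperads: it respects the decomposition because it is built from an operad map, and it respects the differential because $\TR$ is a chain map.

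\textbf{Step 3.} Compose. Given $C$ with structure $\OB(\sLie \otimes \Surj) \to \mathrm{Coend}(C)$, which corresponds to the twisting morphism from Step 1, precompose with the morphism from Step 2 to get an $\OB\Enu$-coalgebra structure. This construction is manifestly functorial, and it does not touch the underlying $\Sym$-module or the right module structure. The right module structure is compatible because the coalgebra structure maps were already morphisms of right modules, and restriction along an operad map preserves that property.

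\textbf{Main obstacle.} I expect the hard part to be Step 2. There is a degree and suspension mismatch between $\Enu$, which models $\Com$, and $\sLie \otimes \Surj$, which corresponds to $\B(\sLie\otimes\Surj) \simeq \Com^\vee$ rather than to $\Com$ itself.

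Accordingly, I would first try to avoid a direct cooperad map. Instead I would use the quasi-isomorphism $\Omega\B(\sLie\otimes\Surj)$-side identification as follows. Compose the canonical twisting morphism $\iota : \B(\sLie \otimes \Surj) \to \OB(\sLie\otimes\Surj)$ with the $\Enu$-comodule structure from Proposition~\ref{prop:comodule structure}. This yields an $\Enu$-indexed family of cooperations on $C$, and the question is whether it assembles into an $\OB\Enu$-structure.

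If that fails, the fallback is to use the universal property of $\OB\Enu$ as a cofibrant operad, and lift the zig-zag $\OB\Enu \qi \Com \leftarrow$ (the operad governing the divided-power structure) through a chosen section.
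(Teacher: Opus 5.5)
\textbf{There is a genuine gap in Steps~2--3: you restrict along the wrong operad.}

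Your Step~1 datum is a map $C \to s(\sLie\otimes\Surj)\circ C$, in which $\sLie\otimes\Surj$ appears covariantly. It dualizes to (suitably shifted) maps $(\sLie\otimes\Surj)^\vee(n)\otimes C\to C^{\otimes n}$. It is not a twisting morphism $\B(\sLie\otimes\Surj)\to\mathrm{Coend}(C)$, which would require maps out of all of $\B(\sLie\otimes\Surj)(n)\otimes C$. So a $\B(\sLie\otimes\Surj)$-coalgebra is \emph{not} a coalgebra over the operad $\OB(\sLie\otimes\Surj)$; the latter would be a homotopy shifted-Lie coalgebra.

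Moreover, $\OB(\sLie\otimes\Surj)$ is a resolution of $\sLie$ (it is the paper's model of the spectral Lie operad). Whatever cooperad map you build in Step~2, applying $\Omega$ lands you in an operad whose arity-$2$ homology sits in degree $-1$. Any operad map $\OB\Enu\to\OB(\sLie\otimes\Surj)$ is therefore zero on $H_0$ in arity $2$, and cannot carry the cocommutative structure. The suspension mismatch you flag is a symptom of exactly this. The $\Enu$-comodule structure on $\OB(\sLie\otimes\Surj)$ does not help either: it serves to tensor \emph{algebras} with $\Enu$-algebras, not to produce coalgebra structures.

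\textbf{The missing idea is linear duality.} Since $\B(\sLie\otimes\Surj)$ is finite-dimensional in each arity and degree, its dual is a dg operad $\B(\sLie\otimes\Surj)^\vee\cong\Omega\bigl(\sLie^\vee\otimes\Surj^\vee\bigr)$. Dualizing $\Delta_C$ turns every $\B(\sLie\otimes\Surj)$-coalgebra into a coalgebra over this operad without changing the underlying object. In fact your Step~1 datum, after dualizing, is exactly the value of this structure on the generators of this quasi-free operad.

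This operad is a resolution of $\Com$, not of $\sLie$. Its augmentation to $\Com$ is degreewise surjective, hence an acyclic fibration in Fresse's semi-model structure on dg operads. Since $\OB\Enu$ is cofibrant, the quasi-isomorphism $\OB\Enu\qi\Com$ lifts to a morphism $\varphi:\OB\Enu\to\Omega\bigl(\sLie^\vee\otimes\Surj^\vee\bigr)$. The desired functor is the inclusion followed by restriction along $\varphi$. This is the paper's proof, written there with $\E$ in place of $\Surj$.

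Your fallback gestures at this lifting argument, but it only works once two things are in place: the target is identified as the dual of the bar construction, and cooperad-coalgebras are included into coalgebras over that operad. The phrase ``a chosen section'' should be replaced by this lift against the acyclic fibration.
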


\begin{proof}
  The linear dg operad of $\B(\sLie \otimes \E)$ is given by $\Omega(\sLie^* \otimes \E^*)$, hence there is an inclusion functor from coalgebras over $\B(\sLie \otimes \E)$ to coalgebras over $\Omega(\sLie^* \otimes \E^*)$, the former encoding a divided-powers conilpotent type of coalgebras and the latter a divided-powers, not-necessarily-conilpotent type of coalgebras, since $\Omega(\sLie^* \otimes \E^*)$ is an injective $\Sym$-module (as it is the linear dual of a projective module, see~\cite[Lemma~1]{LeGrignouRocaiLucio2023}).

  \medskip

  Finally, there is a morphism of dg-operads $\varphi: \OB \Enu \to \Omega(\sLie^* \otimes \E^*)$, which is a lift in the following commutative diagram of dg-operads:
  \[
    \begin{tikzcd}[column sep=3pc,row sep=3pc]
      \mathcal{I} \arrow[r] \arrow[d,rightarrowtail]
      &\Omega(\sLie^* \otimes \E^*) \arrow[d,twoheadrightarrow,"\simeq"] \\
      \OB \Enu \arrow[r,"\simeq"] \arrow[ru, dashed, "\varphi"]
      &\Com
    \end{tikzcd}
  \]
  in the semi-model structure for dg-operads constructed by Fresse in~\cite[Chapter 12]{Fresse2009}. Here, the left vertical arrow is a cofibration since $\OB \Enu$ is a cofibrant dg-operad and the right vertical arrow is an acyclic fibration since it is degree-wise surjective. Hence the dashed lift $\varphi$ exists. The morphism $\varphi$ induces a restriction functor from (twisted) dg $\Omega(\sLie^* \otimes \E^*)$-coalgebras to (twisted) dg $\OB \Enu$-coalgebras and we get the desired result.
\end{proof}

\subsubsection{A conjectural equivalence of twisted \texorpdfstring{$\EEnu_\infty$}{E-infinity}-coalgebras}
Let us recall that, by Theorem~\ref{thm:main}, we have a zig-zag of quasi-isomorphisms of chain complexes:
\[
  \bigoplus_{k \geq 1} \ch(\mathsf{FM}_M(k)) \otimes_{h\Sym_k} V^{\otimes k} \simeq \B_\iota^{(k)} \bigl( \ch*~(M^+) \otimes \OB(\sLie \otimes \E) \circ (s^d V)\bigr),
\]
where on the right-hand side we consider the subcomplex where $V$ appears exactly $k$ times. We show that this zig-zag can be lifted to a zig-zag of quasi-isomorphisms of dg $\Sym$-modules and we conjecture that it can be promoted to a zig-zag of quasi-isomorphisms of twisted coalgebras.

\begin{lemma}
  Let $M$ be a parallelizable manifold of finite type and of dimension $d$. There is a zig-zag of quasi-isomorphisms of dg $\Sym_k$-modules:
  \begin{equation}\label{eq:conj-model}
    \ch(\mathsf{FM}_M(k)) \simeq \B_\iota^{(k)} \bigl( \ch*~(M^+) \otimes \OB(\sLie \otimes \E) \circ (s^d \I)\bigr).
  \end{equation}
  for every $k \geq 1$.
\end{lemma}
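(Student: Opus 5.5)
The plan is to run the whole chain of equivalences of Theorem~\ref{thm:main} one level up, replacing chain complexes by symmetric sequences. In that setting a chain complex $V$ is replaced by the unit symmetric sequence $\I$, concentrated in arity $1$. The guiding observation is that both sides of Theorem~\ref{thm:main} are Schur functors of symmetric sequences evaluated at $V$. On the left, the weight-$k$ part is $\ch(\Conf_k(M)) \otimes_{h\Sym_k} V^{\otimes k}$. On the right, it is the weight-$k$ part of $\B_\iota\bigl(\ch*~(M^+) \otimes \OB(\sLie \otimes \E)\circ(s^dV)\bigr)$, which is the Schur functor of the $\Sym$-module $\B_\iota^{(k)}\bigl(\ch*~(M^+) \otimes \OB(\sLie \otimes \E)\circ(s^d\I)\bigr)$. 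Since $\OB(\sLie \otimes \E)$ is $\Sym$-free, this $\Sym_k$-module is projective in each degree. Also, $\ch(\mathsf{FM}_M(k)) \simeq \ch(\Conf_k(M))$ is $\Sym_k$-equivariantly quasi-isomorphic to a $\Sym_k$-free complex because the action is free.

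First, I would lift Knudsen's theorem (Theorem~\ref{thm:knudsen}) to an equivalence of symmetric sequences of spectra. Knudsen's argument is natural in the labelling spectrum $X$ and is built from functors that are analytic in $X$. So I would apply it in the $\infty$-category of symmetric sequences, or equivalently take $X$ to be the free object on one generator in spectral species. This gives $\Sigma^\infty_+\Conf_\bullet(M) \simeq \indec\bigl(\sL(\Sigma^d \I)^{M^+}\bigr)$ as symmetric sequences. Smashing with $H\kk$ is symmetric monoidal, so the equivalence passes to $\kk$-modules arity-wise. Corollary~\ref{cor:indec-bar} then reads $\ch(\Conf_k(M)) \simeq \indec\bigl(\sL(\Sigma^d\I)^{M^+}\bigr)(k)$ in $\operatorname{Fun}(B\Sym_k,\smod_\kk)$.

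Second, I would redo the point-set steps in dg $\Sym$-modules. Proposition~\ref{prop:rectification of spectral Lie algebras} holds for $\OB(\sLie \otimes \E)$-algebras in symmetric sequences equipped with the projective model structure. The same Hauseng argument applies because the operad is $\Sym$-free, and over a field every dg $\Sym_k$-module is flat for $\otimes_{\Sym_k}$-type constructions after projective replacement. Proposition~\ref{prop: computation of derived indecomposable} carries over verbatim, since the bar–cobar resolution of \cite{LeGrignouRocaiLucio2023} is functorial and applies in any such enriched setting. Theorem~\ref{thm: point-set cotensor with spaces} also goes through arity-wise: tensoring with $\ch*~(M^+)$ is done arity by arity, and the simplicial frame argument only uses degreewise surjectivity and exactness of $\otimes$ over $\kk$. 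Finally, I would identify $\OB(\sLie \otimes \E)\circ(s^d\I)$ with the free algebra on $s^d\I$, which models $\sL(\Sigma^d\I)$.

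The main obstacle is the first step: making Knudsen's equivalence equivariant, that is, valid in symmetric sequences rather than merely after applying $-\otimes_{h\Sym_k} X^{\otimes k}$. Knowing the result for all $X$ only determines the homotopy-orbit functors, and in positive characteristic these do not recover the $\Sym_k$-objects. I would therefore revisit the proof in \cite{Knudsen18}. There, the equivalence comes from factorization homology of the free $\EE_d$-algebra, the Koszul self-duality $\EE_d$ / $\sL$-with-shift, and Poincaré–Koszul duality. Each of these holds at the level of coefficients, i.e.\ for the universal right module $\Sigma^\infty_+\Conf_\bullet(M)$. This yields a natural equivalence of symmetric sequences, which I would then rectify using the steps above.
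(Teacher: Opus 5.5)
The step you call the main obstacle rests on a false claim. For a dg $\Sym_k$-module $N$, the functor $F_N(V) = (N \otimes V^{\otimes k})_{h\Sym_k}$ \emph{does} determine $N$ as an object of $\mathcal{D}(\kk[\Sym_k])$, in every characteristic. Its $k$-th cross-effect at $(\kk, \dots, \kk)$ is, concretely, the multilinear part of $F_N(\kk\langle v_1, \dots, v_k\rangle)$, with $\Sym_k$ acting by permuting the $v_i$ via naturality. This is $(N \otimes \kk[\Sym_k])_{h\Sym_k} \simeq N$, equivariantly, because homotopy orbits of an induced module are the underlying module. What loses information is a single value such as $N_{h\Sym_k}$, not the functor of $V$.

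The zig-zag of Theorem~\ref{thm:main} is natural in $V$. As you correctly observe, both sides are derived Schur functors: on the right, $\B_\iota^{(k)}\bigl(\ch*~(M^+) \otimes \OB(\sLie \otimes \E) \circ (s^d \I)\bigr)$ is $\Sym$-projective, so strict and homotopy coinvariants agree. Taking cross-effects of Theorem~\ref{thm:main} therefore gives the lemma directly. This is exactly the paper's proof, which cites \cite{AroneChing2015,AroneChing2019} for the fact that derived coinvariants remember the coefficients.

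Your alternative is to rerun the whole argument in species with the generic object $\I$ plugged in. That is the categorified form of the same cross-effect trick and could in principle work, but as written its key steps are only asserted. The equivariant form of Knudsen's theorem is justified by ``each of these holds at the level of coefficients''. That amounts to re-checking factorization homology of free $\EE_d$-algebras, Koszul self-duality and Poincar\'e--Koszul duality in spectral species.

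Step two has the same problem. You claim that the rectification of \cite{Hauseng19}, the quasi-planar bar--cobar resolution of \cite{LeGrignouRocaiLucio2023} and the simplicial-frame argument carry over ``verbatim'' to algebras in dg $\Sym$-modules with Day convolution. The cited results do not cover that setting. For instance, cofibrancy of $\OBi A$ in the projective model structure requires checking $\Sym$-projectivity of $\B_\iota A$ and the ladder filtration there. None of this work is needed once you use the naturality in $V$ that Theorem~\ref{thm:main} already provides.
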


\begin{proof}
  Theorem~\ref{thm:main} is an equivalence of polynomial functors. In characteristic zero, by taking $\dim V = \infty$ and considering the subspace of weight $1^k$-vectors in the weight $k$ part, or alternatively by taking cross-effects, we recover an equivalence of symmetric sequences:
  \begin{equation}
    \ch(\mathsf{FM}_M(k)) \simeq \B_\iota \bigl( \ch*~(M^+) \otimes \OB(\sLie \otimes \E) \circ (s^d \I)\bigr),
  \end{equation}
  where $\I$ is the unit operad.
  Since we are dealing with derived coinvariants, this equivalence extends to positive characteristic thanks to the work of~\cite{AroneChing2015,AroneChing2019}.
\end{proof}

The twisted $\OB \E$-coalgebra $\Tw(\ch(\mathsf{FM}_M))$ is unital, where in fact we have that $\ch(\mathsf{FM}_M(0)) \cong \overline{\mathbb{F}}_p$. So it is also canonically augmented, and once we remove this unit, we get a twisted $\OB \Enu$-coalgebra in right $\ch(\mathsf{FM}_d)$-modules, where $\ch(\mathsf{FM}_d)$ is a specific model for the $\EE_d$-operad.

\medskip

There is a unique (up to homotopy) morphism of operads in spectra $\sL_d \to \EE_d$ from the $d$-shifted spectral Lie operad to the $\EE_d$-operad. See Theorem 1.7 and the subsequent paragraphs in~\cite{antolin-camarena_poincare-birkhoff-witt_2025} for more details. Up to choosing a model for this map, it means that any right $\EE_d$-module structure can be restricted to a $\sL_d$-module structure.

\begin{conjecture}\label{conj:e-infty-coalg}
  Let $M$ be a simply connected parallelizable manifold of finite type and of dimension $d$. The zig-zag of quasi-isomorphisms in~\eqref{eq:conj-model} can be promoted to a zig-zag of equivalences
  \[
    \Tw(\ch(\mathsf{FM}_M)) \simeq \B_\iota \bigl( \ch*~(M^+) \otimes \OB(\sLie \otimes \E) \circ (s^d \I)\bigr),
  \]
  of twisted $\OB \Enu$-coalgebras in right $\OB(\Lie_d \otimes \E)$-modules.
\end{conjecture}

Let us point out several pieces of weak evidence in favor of the above conjecture.
First of all, the above conjecture holds in characteristic zero by Theorem~\ref{thm: twisted real model for configuration spaces of manifolds}.
Moreover, the $d$-shifted Lie right module structure, at the level of homology, was shown to be an invariant of the manifold by Malin in~\cite{Malin2024}. Moreover, it follows directly from~\cite[Theorem 1.7]{antolin-camarena_poincare-birkhoff-witt_2025} that the right $\sL_d$-module part of Conjecture~\ref{conj:e-infty-coalg} holds at the chains level when $M = \mathbb{R}^n$.

At first sight, it might be strange to conjecture that a conilpotent (divided-powers) $\EEnu_\infty$-coalgebra is equivalent to an $\EEnu_\infty$-coalgebra $\ch(\mathsf{FM}_M)$ which has no reason to be conilpotent or to have divided powers. However, once we twist the $\EE_\infty$-coalgebra structure on the chains and remove the counit, it is in fact straightforward to observe that any $\EEnu_\infty$-coalgebra structure on a \emph{reduced} $\Sym$-module has to be conilpotent, by definition of the Day convolution product. Moreover, for any reduced $\Sym$-module $N$, its $n$-th iterated Day tensor product $N^{\dotimes n}$ is in fact \emph{free} as an $\Sym_n$-module (see e.g., \cite[§6]{aguiar_monoidal_2010}), which in particular means that invariants are isomorphic to coinvariants and that divided powers also disappear in this context.

\begin{remark}
  Let us sketch a comparison with the constructions performed by Petersen in~\cite{Petersen20}. It follows from~\cite[Theorem 3.18]{Petersen20} that the category of \emph{reduced} (non-unital) twisted commutative algebras admits a model structure in any characteristic. And it follows from~\cite[Theorem 1.2]{PavlovScholbach18} that the quasi-isomorphism $\OB \Enu \qi \Com$ induces a Quillen equivalence between reduced twisted commutative algebras and reduced twisted $\OB \Enu$-algebras, since in the context of \emph{reduced} $\Sym$-modules with the Day tensor product, both are symmetric flat. This means, in other words, that in the reduced non-unital case, twisted $\EEnu$-algebras rectify. Consequently, we expect the construction performed in~\cite{Petersen20} (in the case of standard configuration spaces of points) to be the \emph{strictification} of the $\EEnu$-algebra appearing in the right-hand side of Conjecture~\ref{conj:e-infty-coalg} (up to taking its linear dual). This should follow from the descriptions in~\cite[Section 8]{Petersen20} of these constructions as certain Chevalley--Eilenberg complexes of twisted Lie algebras.

  \medskip

  What is less clear is how to strictify the left-hand side in Conjecture~\ref{conj:e-infty-coalg}. Indeed, it is well-known that in positive characteristic, there is no \emph{strictly} commutative model for the $\EE_\infty$-algebra of cochains on a space, so one cannot strictify it at that level and then apply the twist functor. One could think about using the strictly commutative variant given in terms of so-called $\mathrm{I}_+$-algebras constructed by Richter and Sagave in~\cite{RichterSagave20}. These, in our terminology, would correspond to twisted commutative algebras in right $\Com$-modules. However, this structure is used to encode the homotopy type of \emph{one particular space}, and a priori it seems rather orthogonal to the structure that is possessed by the \emph{collection} of cochains on all configuration spaces of $M$; the two seem to appear for different reasons and to encode different things.
\end{remark}

\subsubsection{On the consequences for the homotopy invariance of configuration spaces}
Let us also explain some of the consequences that Conjecture~\ref{conj:e-infty-coalg} could have for the homotopy invariance of the $p$-adic homotopy type of configuration spaces, in the vein of~\cite{Idrissi2019,CamposWillwacher2023}.

\medskip

It seems plausible that the right $\sL_d$-module structure on $\B_\iota \bigl( \ch*~(M^+) \otimes \OB(\sLie \otimes \E) \circ (s^d \I)\bigr)$ can be extended to a full right $\EE_d$-module structure, as in characteristic zero; see~\cite{Idrissi2023} for more details. Concretely, the complex $\ch*~(M^+) = \ch*(M)$ can also be identified with the compactly supported cohomology of $M$, where one can formulate Poincaré duality, which is the key ingredient used in characteristic zero to extend this right module structure. We suggest the following stronger version of the previous conjecture if this extension can indeed be carried out.

\begin{conjecture}\label{conj:e-infty-coalg strong}
  Let $M$ be a simply connected parallelizable manifold of finite type and of dimension $d$. The zig-zag of quasi-isomorphisms in~\eqref{eq:conj-model} can be promoted to a zig-zag of equivalences
  \[
    \Tw(\ch(\mathsf{FM}_M)) \simeq \B_\iota \bigl( \ch*~(M^+) \otimes \OB(\sLie \otimes \E) \circ (s^d \I)\bigr),
  \]
  of twisted $\OB \Enu$-coalgebras in right $\EE_d$-modules, for some model of the $\EE_d$ operad.
\end{conjecture}

Given the construction of a detwist functor for right $\EE_d$-modules proposed in Section~\ref{paragraph: sketch of the E_d detwist}, it would follow from Conjecture~\ref{conj:e-infty-coalg strong} that
\[
  \ch(\mathsf{FM}_M) \simeq \Detw_d \left(\B_\iota \bigl( \ch*~(M^+) \otimes \OB(\sLie \otimes \E) \circ (s^d \I)\bigr) \right)
\]
are also equivalent as (arity-wise) $\EE_d$-coalgebras in right $\EE_d$-modules. Therefore, the $\EE_d$-homotopy type of the configuration space of a manifold $M$ of dimension $d$ would be a homotopy invariant of the manifold over an algebraically closed field of characteristic $p$. It would be interesting if this was the optimal case, meaning that one could find counterexamples where the above equivalence is not an equivalence of (arity-wise) $\EE_{d+1}$-coalgebras. This, in turn, would suggest a very strong link between the homotopy invariance of configuration spaces in characteristic zero as shown in~\cite{Idrissi2019,CamposWillwacher2023} and the formality of the $\EE_d$ operads, which allows us to construct a \emph{fully symmetric} detwist functor, as explained in Remark~\ref{rmk: formality implies fully symmetric detwist}.

\begin{remark}
  Let us explain one consequence of the invariance of the $\EE_d$-coalgebra type of configurations of points in $M$. If one applies the $\EE_d$ cobar construction to the $\EE_d$-coalgebra of chains on $\ch(\mathsf{FM}_M(n))$, one gets a model for the $\EE_d$-algebra of chains on the $d$-iterated loops on $\Omega^d\mathsf{FM}_M(n)$, where the $\EE_d$-algebra structure comes from the loop space structure (for any $n \geq 1$). This follows from iterating \cite[Theorem 3.5]{Baues1998}. Consequently, this $\EE_d$-algebra structure should be a homotopy invariant of $M$ if Conjecture \ref{conj:e-infty-coalg strong} is true. Its underlying homotopy type as a chain complex was already shown to be invariant by \cite{levitt_spaces_1995} ($\Omega\mathsf{FM}_M(n)$ is an invariant at the level of spaces) but the decomposition of these spaces used in \textit{op.cit.} is not compatible with the $\EE_1$-structure of the loop space.
\end{remark}

\appendix
\section{Computational aspects}
\label{sec:computational-aspects}

Thanks to the very explicit nature of the complex constructed in Theorems~\ref{thm:main} and~\ref{thm:main-surj}, it is possible to perform explicit computations of the homology of configuration spaces of manifolds, at least in small degrees.
We have implemented the complex
\begin{equation}
  \B_{\iota'} \bigl( \ch*~(M^+) \otimes \OB(\sLie \otimes \Surj) \circ (s^d V)\bigr)
\end{equation}
of Theorem~\ref{thm:main-surj} in the computer algebra system SageMath.

We have moreover used it to compute the homology of the unordered configuration spaces of $\mathbb{R}^d$ over $\F_p$ in small degrees~\cite{idrissiRocaLucioSoftware}, using the $\Surj$-algebra surjection on $\ch*~((\R^d)^+) = \ch*~(S^d)$ described in Proposition~\ref{prop:surj-alg-sphere}.
This computation is, of course, very well-known by now (see~\cite{fuksCohomologiesGroupCOS1970,vainsteinCohomologyBraidGroups1978}) but it serves as a sanity check for our implementation, and it gives some insight into potential conjectures on the structure of the homology of configuration spaces in positive characteristic, and especially on how the complexity filtration of $\Surj$ interacts with the ``polynomial / divided powers algebra'' description of the cohomology.

We have also used it to compute the homology of the unordered configuration spaces of the torus $\Sigma_1$ over $\F_p$ and compared it to the results of Chen--Zhang~\cite{ChenZhang2022}, see Section~\ref{sec:homology-of-unordered-configurations-of-the-torus}.

In the following pages, we give the output of our implementation for the homology of the unordered configuration spaces of $\mathbb{R}^2$ over $\F_2$ in small degrees.
The notation follows that of \cite{idrissiRocaLucioSoftware}, where the homology classes are represented by nested trees decorated by elements of the reduced cochain complex of $S^2$ (spanned by a single class \texttt{a2}), with coefficients in the trivial module $V$ spanned by a single element \texttt{v2}.

\subsection*{Homology of $\Conf_2(\R^2)$ over $\F_2$:}

\begin{lstlisting}
Date: 2026-05-11 17:15:24.890534
dim=2, weight=2, degs=[-1, 0, 1, 2, 3, 4, 5, 6, 7, 8], algorithm=fast
Source dump: dump/F2_d2_w2_m8_cc.sobj
Elapsed time (representatives only): 0.2292s

Degree -1: 0 representative(s)
Degree 0: 1 representative(s)
  [0] <([x1,x2] o {1 2}; 1, 2); a2 # <id; v2>, a2 # <id; v2>>
Degree 1: 1 representative(s)
  [0] <([x1,x2] o {1 2 1}; 1, 2); a2 # <id; v2>, a2 # <id; v2>>
Degree 2: 0 representative(s)
Degree 3: 0 representative(s)
Degree 4: 0 representative(s)
Degree 5: 0 representative(s)
Degree 6: 0 representative(s)
Degree 7: 0 representative(s)
Degree 8: 0 representative(s)
\end{lstlisting}

Nonzero homology classes:

Degree 0:
\scalebox{.5}{
  \begin{forest} uconf tree
    [{$\left[x_{1}, x_{2}\right] \odot \left\{1\;2\right\}$}, rv[{$\alpha^{2}$}, bx[{$\beta_{2}$}, lf]][{$\alpha^{2}$}, bx[{$\beta_{2}$}, lf]]]
\end{forest}}

Degree 1:
\scalebox{.5}{
  \begin{forest} uconf tree
    [{$\left[x_{1}, x_{2}\right] \odot \left\{1\;2\;1\right\}$}, rv[{$\alpha^{2}$}, bx[{$\beta_{2}$}, lf]][{$\alpha^{2}$}, bx[{$\beta_{2}$}, lf]]]
\end{forest}}

\subsection*{Homology of $\Conf_3(\R^2)$ over $\F_2$:}

\begin{lstlisting}
Date: 2026-05-11 17:14:50.265263
dim=2, weight=3, degs=[-1, 0, 1, 2, 3, 4, 5, 6], algorithm=fast
Source dump: dump/F2_d2_w3_m6_cc.sobj
Elapsed time (representatives only): 8.0952s

Degree -1: 0 representative(s)
Degree 0: 1 representative(s)
  [0] <([x2,[x1,x3]] o {1 2 1 3}; 1, 2, 3); a2 # <id; v2>, a2 # <id; v2>, a2 # <id; v2>> + <([x2,[x1,x3]] o {1 2 3 1}; 1, 2, 3); a2 # <id; v2>, a2 # <id; v2>, a2 # <id; v2>>
Degree 1: 1 representative(s)
  [0] <([x2,[x1,x3]] o {1 2 3 1 2}; 1, 2, 3); a2 # <id; v2>, a2 # <id; v2>, a2 # <id; v2>> + <([x2,[x1,x3]] o {1 2 3 1 3}; 1, 2, 3); a2 # <id; v2>, a2 # <id; v2>, a2 # <id; v2>> + <([x2,[x1,x3]] o {1 2 3 2 1}; 1, 2, 3); a2 # <id; v2>, a2 # <id; v2>, a2 # <id; v2>> + <([x2,[x1,x3]] o {1 2 3 2 3}; 1, 2, 3); a2 # <id; v2>, a2 # <id; v2>, a2 # <id; v2>>
Degree 2: 0 representative(s)
Degree 3: 0 representative(s)
Degree 4: 0 representative(s)
Degree 5: 0 representative(s)
Degree 6: 0 representative(s)
\end{lstlisting}

Nonzero homology classes:

Degree 0:
\scalebox{.5}{
  \begin{forest} uconf tree
    [{$\left[x_{2}, \left[x_{1}, x_{3}\right]\right] \odot \left\{1\;2\;1\;3\right\}$}, rv[{$\alpha^{2}$}, bx[{$\beta_{2}$}, lf]][{$\alpha^{2}$}, bx[{$\beta_{2}$}, lf]][{$\alpha^{2}$}, bx[{$\beta_{2}$}, lf]]]
\end{forest}} +
\scalebox{.5}{
  \begin{forest} uconf tree
    [{$\left[x_{2}, \left[x_{1}, x_{3}\right]\right] \odot \left\{1\;2\;3\;1\right\}$}, rv[{$\alpha^{2}$}, bx[{$\beta_{2}$}, lf]][{$\alpha^{2}$}, bx[{$\beta_{2}$}, lf]][{$\alpha^{2}$}, bx[{$\beta_{2}$}, lf]]]
\end{forest}}

Degree 1:
\scalebox{.5}{
  \begin{forest} uconf tree
    [{$\left[x_{2}, \left[x_{1}, x_{3}\right]\right] \odot \left\{1\;2\;3\;1\;2\right\}$}, rv[{$\alpha^{2}$}, bx[{$\beta_{2}$}, lf]][{$\alpha^{2}$}, bx[{$\beta_{2}$}, lf]][{$\alpha^{2}$}, bx[{$\beta_{2}$}, lf]]]
\end{forest}} +
\scalebox{.5}{
  \begin{forest} uconf tree
    [{$\left[x_{2}, \left[x_{1}, x_{3}\right]\right] \odot \left\{1\;2\;3\;1\;3\right\}$}, rv[{$\alpha^{2}$}, bx[{$\beta_{2}$}, lf]][{$\alpha^{2}$}, bx[{$\beta_{2}$}, lf]][{$\alpha^{2}$}, bx[{$\beta_{2}$}, lf]]]
\end{forest}} +
\scalebox{.5}{
  \begin{forest} uconf tree
    [{$\left[x_{2}, \left[x_{1}, x_{3}\right]\right] \odot \left\{1\;2\;3\;2\;1\right\}$}, rv[{$\alpha^{2}$}, bx[{$\beta_{2}$}, lf]][{$\alpha^{2}$}, bx[{$\beta_{2}$}, lf]][{$\alpha^{2}$}, bx[{$\beta_{2}$}, lf]]]
\end{forest}} +
\scalebox{.5}{
  \begin{forest} uconf tree
    [{$\left[x_{2}, \left[x_{1}, x_{3}\right]\right] \odot \left\{1\;2\;3\;2\;3\right\}$}, rv[{$\alpha^{2}$}, bx[{$\beta_{2}$}, lf]][{$\alpha^{2}$}, bx[{$\beta_{2}$}, lf]][{$\alpha^{2}$}, bx[{$\beta_{2}$}, lf]]]
\end{forest}}

\subsection*{Homology of $\Conf_4(\R^2)$ over $\F_2$:}

\begin{lstlisting}
Date: 2026-05-10 16:37:24.166091
dim=2, weight=4, degs=[-1, 0, 1, 2, 3, 4], algorithm=fast
Source dump: dump/cc_F2_2_4_4.sobj
Elapsed time (representatives only): 6778.0786s

Degree -1: 0 representative(s)
Degree 0: 1 representative(s)
  [0] <([x3,[x2,[x1,x4]]] o {1 2 3 2 1 4}; 1, 2, 3, 4); a2 # <id; v2>, a2 # <id; v2>, a2 # <id; v2>, a2 # <id; v2>> + <([x3,[x2,[x1,x4]]] o {1 2 3 2 4 1}; 1, 2, 3, 4); a2 # <id; v2>, a2 # <id; v2>, a2 # <id; v2>, a2 # <id; v2>> + <([x3,[x2,[x1,x4]]] o {1 2 3 2 4 2}; 1, 2, 3, 4); a2 # <id; v2>, a2 # <id; v2>, a2 # <id; v2>, a2 # <id; v2>> + <([x3,[x2,[x1,x4]]] o {1 2 3 4 1 2}; 1, 2, 3, 4); a2 # <id; v2>, a2 # <id; v2>, a2 # <id; v2>, a2 # <id; v2>> + <([x3,[x2,[x1,x4]]] o {1 2 3 4 1 4}; 1, 2, 3, 4); a2 # <id; v2>, a2 # <id; v2>, a2 # <id; v2>, a2 # <id; v2>> + <([x3,[x2,[x1,x4]]] o {1 2 3 4 2 1}; 1, 2, 3, 4); a2 # <id; v2>, a2 # <id; v2>, a2 # <id; v2>, a2 # <id; v2>> + <([x3,[x2,[x1,x4]]] o {1 2 3 4 2 4}; 1, 2, 3, 4); a2 # <id; v2>, a2 # <id; v2>, a2 # <id; v2>, a2 # <id; v2>>
Degree 1: 1 representative(s)
  [0] <([x3,[x2,[x1,x4]]] o {1 2 3 2 1 4 1}; 1, 2, 3, 4); a2 # <id; v2>, a2 # <id; v2>, a2 # <id; v2>, a2 # <id; v2>> + <([x3,[x2,[x1,x4]]] o {1 2 3 2 4 1 4}; 1, 2, 3, 4); a2 # <id; v2>, a2 # <id; v2>, a2 # <id; v2>, a2 # <id; v2>> + <([x3,[x2,[x1,x4]]] o {1 2 3 2 4 2 4}; 1, 2, 3, 4); a2 # <id; v2>, a2 # <id; v2>, a2 # <id; v2>, a2 # <id; v2>> + <([x3,[x2,[x1,x4]]] o {1 2 3 4 1 2 1}; 1, 2, 3, 4); a2 # <id; v2>, a2 # <id; v2>, a2 # <id; v2>, a2 # <id; v2>> + <([x3,[x2,[x1,x4]]] o {1 2 3 4 1 4 1}; 1, 2, 3, 4); a2 # <id; v2>, a2 # <id; v2>, a2 # <id; v2>, a2 # <id; v2>> + <([x3,[x2,[x1,x4]]] o {1 2 3 4 2 1 2}; 1, 2, 3, 4); a2 # <id; v2>, a2 # <id; v2>, a2 # <id; v2>, a2 # <id; v2>> + <([x3,[x2,[x1,x4]]] o {1 2 3 4 2 4 2}; 1, 2, 3, 4); a2 # <id; v2>, a2 # <id; v2>, a2 # <id; v2>, a2 # <id; v2>>
Degree 2: 1 representative(s)
  [0] <([x3,[x2,[x1,x4]]] o {1 2 3 2 1 4 1 4}; 1, 2, 3, 4); a2 # <id; v2>, a2 # <id; v2>, a2 # <id; v2>, a2 # <id; v2>> + <([x3,[x2,[x1,x4]]] o {1 2 3 2 4 1 4 1}; 1, 2, 3, 4); a2 # <id; v2>, a2 # <id; v2>, a2 # <id; v2>, a2 # <id; v2>> + <([x3,[x2,[x1,x4]]] o {1 2 3 2 4 2 4 2}; 1, 2, 3, 4); a2 # <id; v2>, a2 # <id; v2>, a2 # <id; v2>, a2 # <id; v2>> + <([x3,[x2,[x1,x4]]] o {1 2 3 4 1 2 1 2}; 1, 2, 3, 4); a2 # <id; v2>, a2 # <id; v2>, a2 # <id; v2>, a2 # <id; v2>> + <([x3,[x2,[x1,x4]]] o {1 2 3 4 1 4 1 4}; 1, 2, 3, 4); a2 # <id; v2>, a2 # <id; v2>, a2 # <id; v2>, a2 # <id; v2>> + <([x3,[x2,[x1,x4]]] o {1 2 3 4 2 1 2 1}; 1, 2, 3, 4); a2 # <id; v2>, a2 # <id; v2>, a2 # <id; v2>, a2 # <id; v2>> + <([x3,[x2,[x1,x4]]] o {1 2 3 4 2 4 2 4}; 1, 2, 3, 4); a2 # <id; v2>, a2 # <id; v2>, a2 # <id; v2>, a2 # <id; v2>>
Degree 3: 1 representative(s)
  [0] <([x3,[x2,[x1,x4]]] o {1 2 3 2 1 4 1 4 1}; 1, 2, 3, 4); a2 # <id; v2>, a2 # <id; v2>, a2 # <id; v2>, a2 # <id; v2>> + <([x3,[x2,[x1,x4]]] o {1 2 3 2 4 1 4 1 4}; 1, 2, 3, 4); a2 # <id; v2>, a2 # <id; v2>, a2 # <id; v2>, a2 # <id; v2>> + <([x3,[x2,[x1,x4]]] o {1 2 3 2 4 2 4 2 4}; 1, 2, 3, 4); a2 # <id; v2>, a2 # <id; v2>, a2 # <id; v2>, a2 # <id; v2>> + <([x3,[x2,[x1,x4]]] o {1 2 3 4 1 2 1 2 1}; 1, 2, 3, 4); a2 # <id; v2>, a2 # <id; v2>, a2 # <id; v2>, a2 # <id; v2>> + <([x3,[x2,[x1,x4]]] o {1 2 3 4 1 4 1 4 1}; 1, 2, 3, 4); a2 # <id; v2>, a2 # <id; v2>, a2 # <id; v2>, a2 # <id; v2>> + <([x3,[x2,[x1,x4]]] o {1 2 3 4 2 1 2 1 2}; 1, 2, 3, 4); a2 # <id; v2>, a2 # <id; v2>, a2 # <id; v2>, a2 # <id; v2>> + <([x3,[x2,[x1,x4]]] o {1 2 3 4 2 4 2 4 2}; 1, 2, 3, 4); a2 # <id; v2>, a2 # <id; v2>, a2 # <id; v2>, a2 # <id; v2>>
Degree 4: 0 representative(s)
\end{lstlisting}

Nonzero homology classes:

Degree 0:
\scalebox{.5}{
  \begin{forest} uconf tree
    [{$\left[x_{3}, \left[x_{2}, \left[x_{1}, x_{4}\right]\right]\right] \odot \left\{1\;2\;3\;2\;1\;4\right\}$}, rv[{$\alpha^{2}$}, bx[{$\beta_{2}$}, lf]][{$\alpha^{2}$}, bx[{$\beta_{2}$}, lf]][{$\alpha^{2}$}, bx[{$\beta_{2}$}, lf]][{$\alpha^{2}$}, bx[{$\beta_{2}$}, lf]]]
\end{forest}} + \scalebox{.5}{
  \begin{forest} uconf tree
    [{$\left[x_{3}, \left[x_{2}, \left[x_{1}, x_{4}\right]\right]\right] \odot \left\{1\;2\;3\;2\;4\;1\right\}$}, rv[{$\alpha^{2}$}, bx[{$\beta_{2}$}, lf]][{$\alpha^{2}$}, bx[{$\beta_{2}$}, lf]][{$\alpha^{2}$}, bx[{$\beta_{2}$}, lf]][{$\alpha^{2}$}, bx[{$\beta_{2}$}, lf]]]
\end{forest}} + \scalebox{.5}{
  \begin{forest} uconf tree
    [{$\left[x_{3}, \left[x_{2}, \left[x_{1}, x_{4}\right]\right]\right] \odot \left\{1\;2\;3\;2\;4\;2\right\}$}, rv[{$\alpha^{2}$}, bx[{$\beta_{2}$}, lf]][{$\alpha^{2}$}, bx[{$\beta_{2}$}, lf]][{$\alpha^{2}$}, bx[{$\beta_{2}$}, lf]][{$\alpha^{2}$}, bx[{$\beta_{2}$}, lf]]]
\end{forest}} + \scalebox{.5}{
  \begin{forest} uconf tree
    [{$\left[x_{3}, \left[x_{2}, \left[x_{1}, x_{4}\right]\right]\right] \odot \left\{1\;2\;3\;4\;1\;2\right\}$}, rv[{$\alpha^{2}$}, bx[{$\beta_{2}$}, lf]][{$\alpha^{2}$}, bx[{$\beta_{2}$}, lf]][{$\alpha^{2}$}, bx[{$\beta_{2}$}, lf]][{$\alpha^{2}$}, bx[{$\beta_{2}$}, lf]]]
\end{forest}} + \scalebox{.5}{
  \begin{forest} uconf tree
    [{$\left[x_{3}, \left[x_{2}, \left[x_{1}, x_{4}\right]\right]\right] \odot \left\{1\;2\;3\;4\;1\;4\right\}$}, rv[{$\alpha^{2}$}, bx[{$\beta_{2}$}, lf]][{$\alpha^{2}$}, bx[{$\beta_{2}$}, lf]][{$\alpha^{2}$}, bx[{$\beta_{2}$}, lf]][{$\alpha^{2}$}, bx[{$\beta_{2}$}, lf]]]
\end{forest}} + \scalebox{.5}{
  \begin{forest} uconf tree
    [{$\left[x_{3}, \left[x_{2}, \left[x_{1}, x_{4}\right]\right]\right] \odot \left\{1\;2\;3\;4\;2\;1\right\}$}, rv[{$\alpha^{2}$}, bx[{$\beta_{2}$}, lf]][{$\alpha^{2}$}, bx[{$\beta_{2}$}, lf]][{$\alpha^{2}$}, bx[{$\beta_{2}$}, lf]][{$\alpha^{2}$}, bx[{$\beta_{2}$}, lf]]]
\end{forest}} + \scalebox{.5}{
  \begin{forest} uconf tree
    [{$\left[x_{3}, \left[x_{2}, \left[x_{1}, x_{4}\right]\right]\right] \odot \left\{1\;2\;3\;4\;2\;4\right\}$}, rv[{$\alpha^{2}$}, bx[{$\beta_{2}$}, lf]][{$\alpha^{2}$}, bx[{$\beta_{2}$}, lf]][{$\alpha^{2}$}, bx[{$\beta_{2}$}, lf]][{$\alpha^{2}$}, bx[{$\beta_{2}$}, lf]]]
\end{forest}}

Degree 1:
\scalebox{.5}{
  \begin{forest} uconf tree
    [{$\left[x_{3}, \left[x_{2}, \left[x_{1}, x_{4}\right]\right]\right] \odot \left\{1\;2\;3\;2\;1\;4\;1\right\}$}, rv[{$\alpha^{2}$}, bx[{$\beta_{2}$}, lf]][{$\alpha^{2}$}, bx[{$\beta_{2}$}, lf]][{$\alpha^{2}$}, bx[{$\beta_{2}$}, lf]][{$\alpha^{2}$}, bx[{$\beta_{2}$}, lf]]]
\end{forest}} +
\scalebox{.5}{
  \begin{forest} uconf tree
    [{$\left[x_{3}, \left[x_{2}, \left[x_{1}, x_{4}\right]\right]\right] \odot \left\{1\;2\;3\;2\;4\;1\;4\right\}$}, rv[{$\alpha^{2}$}, bx[{$\beta_{2}$}, lf]][{$\alpha^{2}$}, bx[{$\beta_{2}$}, lf]][{$\alpha^{2}$}, bx[{$\beta_{2}$}, lf]][{$\alpha^{2}$}, bx[{$\beta_{2}$}, lf]]]
\end{forest}} +
\scalebox{.5}{
  \begin{forest} uconf tree
    [{$\left[x_{3}, \left[x_{2}, \left[x_{1}, x_{4}\right]\right]\right] \odot \left\{1\;2\;3\;2\;4\;2\;4\right\}$}, rv[{$\alpha^{2}$}, bx[{$\beta_{2}$}, lf]][{$\alpha^{2}$}, bx[{$\beta_{2}$}, lf]][{$\alpha^{2}$}, bx[{$\beta_{2}$}, lf]][{$\alpha^{2}$}, bx[{$\beta_{2}$}, lf]]]
\end{forest}} +
\scalebox{.5}{
  \begin{forest} uconf tree
    [{$\left[x_{3}, \left[x_{2}, \left[x_{1}, x_{4}\right]\right]\right] \odot \left\{1\;2\;3\;4\;1\;2\;1\right\}$}, rv[{$\alpha^{2}$}, bx[{$\beta_{2}$}, lf]][{$\alpha^{2}$}, bx[{$\beta_{2}$}, lf]][{$\alpha^{2}$}, bx[{$\beta_{2}$}, lf]][{$\alpha^{2}$}, bx[{$\beta_{2}$}, lf]]]
\end{forest}} +
\scalebox{.5}{
  \begin{forest} uconf tree
    [{$\left[x_{3}, \left[x_{2}, \left[x_{1}, x_{4}\right]\right]\right] \odot \left\{1\;2\;3\;4\;1\;4\;1\right\}$}, rv[{$\alpha^{2}$}, bx[{$\beta_{2}$}, lf]][{$\alpha^{2}$}, bx[{$\beta_{2}$}, lf]][{$\alpha^{2}$}, bx[{$\beta_{2}$}, lf]][{$\alpha^{2}$}, bx[{$\beta_{2}$}, lf]]]
\end{forest}} +
\scalebox{.5}{
  \begin{forest} uconf tree
    [{$\left[x_{3}, \left[x_{2}, \left[x_{1}, x_{4}\right]\right]\right] \odot \left\{1\;2\;3\;4\;2\;1\;2\right\}$}, rv[{$\alpha^{2}$}, bx[{$\beta_{2}$}, lf]][{$\alpha^{2}$}, bx[{$\beta_{2}$}, lf]][{$\alpha^{2}$}, bx[{$\beta_{2}$}, lf]][{$\alpha^{2}$}, bx[{$\beta_{2}$}, lf]]]
\end{forest}} +
\scalebox{.5}{
  \begin{forest} uconf tree
    [{$\left[x_{3}, \left[x_{2}, \left[x_{1}, x_{4}\right]\right]\right] \odot \left\{1\;2\;3\;4\;2\;4\;2\right\}$}, rv[{$\alpha^{2}$}, bx[{$\beta_{2}$}, lf]][{$\alpha^{2}$}, bx[{$\beta_{2}$}, lf]][{$\alpha^{2}$}, bx[{$\beta_{2}$}, lf]][{$\alpha^{2}$}, bx[{$\beta_{2}$}, lf]]]
\end{forest}}

Degree 2:
\scalebox{.5}{
  \begin{forest} uconf tree
    [{$\left[x_{3}, \left[x_{2}, \left[x_{1}, x_{4}\right]\right]\right] \odot \left\{1\;2\;3\;2\;1\;4\;1\;4\right\}$}, rv[{$\alpha^{2}$}, bx[{$\beta_{2}$}, lf]][{$\alpha^{2}$}, bx[{$\beta_{2}$}, lf]][{$\alpha^{2}$}, bx[{$\beta_{2}$}, lf]][{$\alpha^{2}$}, bx[{$\beta_{2}$}, lf]]]
\end{forest}} +
\scalebox{.5}{
  \begin{forest} uconf tree
    [{$\left[x_{3}, \left[x_{2}, \left[x_{1}, x_{4}\right]\right]\right] \odot \left\{1\;2\;3\;2\;4\;1\;4\;1\right\}$}, rv[{$\alpha^{2}$}, bx[{$\beta_{2}$}, lf]][{$\alpha^{2}$}, bx[{$\beta_{2}$}, lf]][{$\alpha^{2}$}, bx[{$\beta_{2}$}, lf]][{$\alpha^{2}$}, bx[{$\beta_{2}$}, lf]]]
\end{forest}} +
\scalebox{.5}{
  \begin{forest} uconf tree
    [{$\left[x_{3}, \left[x_{2}, \left[x_{1}, x_{4}\right]\right]\right] \odot \left\{1\;2\;3\;2\;4\;2\;4\;2\right\}$}, rv[{$\alpha^{2}$}, bx[{$\beta_{2}$}, lf]][{$\alpha^{2}$}, bx[{$\beta_{2}$}, lf]][{$\alpha^{2}$}, bx[{$\beta_{2}$}, lf]][{$\alpha^{2}$}, bx[{$\beta_{2}$}, lf]]]
\end{forest}} +
\scalebox{.5}{
  \begin{forest} uconf tree
    [{$\left[x_{3}, \left[x_{2}, \left[x_{1}, x_{4}\right]\right]\right] \odot \left\{1\;2\;3\;4\;1\;2\;1\;2\right\}$}, rv[{$\alpha^{2}$}, bx[{$\beta_{2}$}, lf]][{$\alpha^{2}$}, bx[{$\beta_{2}$}, lf]][{$\alpha^{2}$}, bx[{$\beta_{2}$}, lf]][{$\alpha^{2}$}, bx[{$\beta_{2}$}, lf]]]
\end{forest}} +
\scalebox{.5}{
  \begin{forest} uconf tree
    [{$\left[x_{3}, \left[x_{2}, \left[x_{1}, x_{4}\right]\right]\right] \odot \left\{1\;2\;3\;4\;1\;4\;1\;4\right\}$}, rv[{$\alpha^{2}$}, bx[{$\beta_{2}$}, lf]][{$\alpha^{2}$}, bx[{$\beta_{2}$}, lf]][{$\alpha^{2}$}, bx[{$\beta_{2}$}, lf]][{$\alpha^{2}$}, bx[{$\beta_{2}$}, lf]]]
\end{forest}} +
\scalebox{.5}{
  \begin{forest} uconf tree
    [{$\left[x_{3}, \left[x_{2}, \left[x_{1}, x_{4}\right]\right]\right] \odot \left\{1\;2\;3\;4\;2\;1\;2\;1\right\}$}, rv[{$\alpha^{2}$}, bx[{$\beta_{2}$}, lf]][{$\alpha^{2}$}, bx[{$\beta_{2}$}, lf]][{$\alpha^{2}$}, bx[{$\beta_{2}$}, lf]][{$\alpha^{2}$}, bx[{$\beta_{2}$}, lf]]]
\end{forest}} +
\scalebox{.5}{
  \begin{forest} uconf tree
    [{$\left[x_{3}, \left[x_{2}, \left[x_{1}, x_{4}\right]\right]\right] \odot \left\{1\;2\;3\;4\;2\;4\;2\;4\right\}$}, rv[{$\alpha^{2}$}, bx[{$\beta_{2}$}, lf]][{$\alpha^{2}$}, bx[{$\beta_{2}$}, lf]][{$\alpha^{2}$}, bx[{$\beta_{2}$}, lf]][{$\alpha^{2}$}, bx[{$\beta_{2}$}, lf]]]
\end{forest}}

Degree 3:
\scalebox{.5}{
  \begin{forest} uconf tree
    [{$\left[x_{3}, \left[x_{2}, \left[x_{1}, x_{4}\right]\right]\right] \odot \left\{1\;2\;3\;2\;1\;4\;1\;4\;1\right\}$}, rv[{$\alpha^{2}$}, bx[{$\beta_{2}$}, lf]][{$\alpha^{2}$}, bx[{$\beta_{2}$}, lf]][{$\alpha^{2}$}, bx[{$\beta_{2}$}, lf]][{$\alpha^{2}$}, bx[{$\beta_{2}$}, lf]]]
\end{forest}} +
\scalebox{.5}{
  \begin{forest} uconf tree
    [{$\left[x_{3}, \left[x_{2}, \left[x_{1}, x_{4}\right]\right]\right] \odot \left\{1\;2\;3\;2\;4\;1\;4\;1\;4\right\}$}, rv[{$\alpha^{2}$}, bx[{$\beta_{2}$}, lf]][{$\alpha^{2}$}, bx[{$\beta_{2}$}, lf]][{$\alpha^{2}$}, bx[{$\beta_{2}$}, lf]][{$\alpha^{2}$}, bx[{$\beta_{2}$}, lf]]]
\end{forest}} +
\scalebox{.5}{
  \begin{forest} uconf tree
    [{$\left[x_{3}, \left[x_{2}, \left[x_{1}, x_{4}\right]\right]\right] \odot \left\{1\;2\;3\;2\;4\;2\;4\;2\;4\right\}$}, rv[{$\alpha^{2}$}, bx[{$\beta_{2}$}, lf]][{$\alpha^{2}$}, bx[{$\beta_{2}$}, lf]][{$\alpha^{2}$}, bx[{$\beta_{2}$}, lf]][{$\alpha^{2}$}, bx[{$\beta_{2}$}, lf]]]
\end{forest}} +
\scalebox{.5}{
  \begin{forest} uconf tree
    [{$\left[x_{3}, \left[x_{2}, \left[x_{1}, x_{4}\right]\right]\right] \odot \left\{1\;2\;3\;4\;1\;2\;1\;2\;1\right\}$}, rv[{$\alpha^{2}$}, bx[{$\beta_{2}$}, lf]][{$\alpha^{2}$}, bx[{$\beta_{2}$}, lf]][{$\alpha^{2}$}, bx[{$\beta_{2}$}, lf]][{$\alpha^{2}$}, bx[{$\beta_{2}$}, lf]]]
\end{forest}} +
\scalebox{.5}{
  \begin{forest} uconf tree
    [{$\left[x_{3}, \left[x_{2}, \left[x_{1}, x_{4}\right]\right]\right] \odot \left\{1\;2\;3\;4\;1\;4\;1\;4\;1\right\}$}, rv[{$\alpha^{2}$}, bx[{$\beta_{2}$}, lf]][{$\alpha^{2}$}, bx[{$\beta_{2}$}, lf]][{$\alpha^{2}$}, bx[{$\beta_{2}$}, lf]][{$\alpha^{2}$}, bx[{$\beta_{2}$}, lf]]]
\end{forest}} +
\scalebox{.5}{
  \begin{forest} uconf tree
    [{$\left[x_{3}, \left[x_{2}, \left[x_{1}, x_{4}\right]\right]\right] \odot \left\{1\;2\;3\;4\;2\;1\;2\;1\;2\right\}$}, rv[{$\alpha^{2}$}, bx[{$\beta_{2}$}, lf]][{$\alpha^{2}$}, bx[{$\beta_{2}$}, lf]][{$\alpha^{2}$}, bx[{$\beta_{2}$}, lf]][{$\alpha^{2}$}, bx[{$\beta_{2}$}, lf]]]
\end{forest}} +
\scalebox{.5}{
  \begin{forest} uconf tree
    [{$\left[x_{3}, \left[x_{2}, \left[x_{1}, x_{4}\right]\right]\right] \odot \left\{1\;2\;3\;4\;2\;4\;2\;4\;2\right\}$}, rv[{$\alpha^{2}$}, bx[{$\beta_{2}$}, lf]][{$\alpha^{2}$}, bx[{$\beta_{2}$}, lf]][{$\alpha^{2}$}, bx[{$\beta_{2}$}, lf]][{$\alpha^{2}$}, bx[{$\beta_{2}$}, lf]]]
\end{forest}}

\printbibliography
\end{document}